\documentclass{article}
\usepackage[utf8]{inputenc}
\usepackage{amssymb,amsmath,latexsym}
\usepackage{enumerate}
\usepackage[english]{babel}
\usepackage{amsthm}
\usepackage{verbatim}
\usepackage{graphicx}
\usepackage{color}
\usepackage{xcolor}
\usepackage{tikz}
\usepackage{float}
\usetikzlibrary{calc, intersections, through, backgrounds}
\usetikzlibrary{shapes.geometric}
\tikzstyle{point}=[ball color=white, circle, draw=black, inner sep=0.1cm]
\tikzstyle{redball}=[ball color=red, circle, draw=black, inner sep=0.1cm]
\tikzstyle{blueball}=[ball color=blue, circle, draw=black, inner sep=0.1cm]
\tikzstyle{blackball}=[ball color=black, circle, draw=black, inner sep=0.1cm]
\tikzstyle{greenball}=[ball color=green, circle, draw=black, inner sep=0.1cm]
\tikzstyle{violetball}=[ball color=violet, circle, draw=black, inner sep=0.1cm]
\tikzstyle{cyanball}=[ball color=cyan, circle, draw=black, inner sep=0.1cm]
\tikzstyle{brownball}=[ball color=brown, circle, draw=black, inner sep=0.1cm]
\tikzstyle{orangeball}=[ball color=orange, circle, draw=black, inner sep=0.1cm]
\tikzstyle{yellowball}=[ball color=yellow, circle, draw=black, inner sep=0.1cm]
\tikzstyle{whiteball}=[ball color=white, circle, draw=black, inner sep=0.1cm]
\tikzset{redballhollow/.style={circle,draw=red,line width=1pt,fill=white,inner sep=0.1cm}}
\usepackage{caption}
\usepackage{subcaption}
\usetikzlibrary{arrows.meta}
\usepackage{authblk}
\usepackage{orcidlink}
\usepackage{mathtools}
\usepackage[vlined,ruled,linesnumbered]{algorithm2e}

\newcommand{\verteq}{\rotatebox{90}{$\,=$}}
\newcommand{\equalto}[2]{\underset{\overset{\verteq}{#2}}{#1}}

\newtheorem{theorem}{Theorem}[section]

\newtheorem{corollary}[theorem]{Corollary}

\newtheorem{definition}[theorem]{Definition}

\newtheorem{lemma}[theorem]{Lemma}

\newtheorem{proposition}[theorem]{Proposition}
\newtheorem{remark}[theorem]{Remark}

\newcommand{\ccap}[2]{\mathrm{cap}^{\mathbin{\Box}}_{#1}(#2)}

\DeclareMathOperator{\BoxProduct}{\mathbin{\Box}}
\DeclareMathOperator{\orb}{\mathrm{orb}}

\DeclareMathOperator{\lev}{\mathrm{lev}}
\DeclareMathOperator{\brig}{\mathrm{brig}}

\newcommand{\cvSpan}[1]{\sigma^{\BoxProduct}_V(#1)}

\SetKwData{MaxM}{max\_M}
\SetKwData{CurrentM}{current\_M}
\SetKwData{Visited}{visited}
\SetKwData{MaxDeg}{$\Delta$}
\SetKwData{Undefined}{undefined}
\SetKwFunction{Brig}{brig}
\SetKwData{Boundary}{boundary}
\SetKwData{True}{True}
\SetKwData{False}{False}
\SetKwData{Current}{current}
\SetKwData{Neighbor}{neighbor}
\SetKwData{Queue}{Q}

\SetKwFunction{Enqueue}{enqueue}
\SetKwFunction{Dequeue}{dequeue}
\SetKwFunction{Append}{append}

\title{Results on Cartesian $1$-capacity of graphs}

\author[1,2]{Mateja Grašič \orcidlink{0000-0003-2684-7632}\thanks{mateja.grasic@um.si}}
\author[3]{Christopher Mouron \orcidlink{0009-0008-5122-7519}\thanks{mouronc@rhodes.edu}}
\author[4]{Aljoša Šubašić \orcidlink{0000-0002-6943-0856}\thanks{aljsub@pmfst.hr}}
\author[1,2]{Andrej Taranenko \orcidlink{0000-0002-2438-0496}\thanks{ andrej.taranenko@um.si}}
\author[4]{Tanja Vojković \orcidlink{0000-0002-8160-7757}\thanks{tanja@pmfst.hr}}

\affil[1]{Faculty of Natural Sciences and Mathematics, University of Maribor, Slovenia}
\affil[2]{Institute of Mathematics, Physics and Mechanics, Ljubljana, Slovenia}
\affil[3]{Department of Mathematics and Statistics, Rhodes College, Memphis, TN, 38112 USA}
\affil[4]{Faculty of Science, University of Split, Croatia}

\date{}

\begin{document}

\maketitle

\begin{abstract}
    The concept of graph capacity extends graph span by considering the maximum number of agents that can simultaneously traverse a graph while preserving a prescribed minimum distance. We study the Cartesian $1$-capacity, corresponding to the movement in which exactly one agent moves at each step. We establish general lower and upper bounds based on structural graph properties and derive exact results for trees. Our work highlights the role of branching and bridge structures in determining the Cartesian $1$-capacity and provides new insights into collision-free multi-agent motion on graphs.
\end{abstract}

\section{Background and motivation}

The concept of graph capacity originated from the study of graph span, introduced by Banič and Taranenko \cite{BaTa23} as a graph-theoretic counterpart to the classical topological notion of span of a continuum. The original motivation arose from scenarios where moving agents must preserve a minimum mutual distance---a constraint that gained particular relevance during the COVID-19 pandemic. In this framework, two players traverse a graph to cover all vertices or edges while adhering to one of three movement models: traditional (players independently move or stay), active (both must move), or lazy (exactly one moves per step). These restrictions define the strong, direct, and Cartesian spans, respectively. Since their introduction, a growing body of literature has extensively explored these parameters, yielding polynomial-time algorithms, bounding the differences between span variants, and establishing exact values and structural characterizations for several graph families, including trees, paths, cycles, and multilayered networks \cite{BaTa23, DrMiTa25, Erceg23, GrMoTa25, SuVo24, SuVo25}.

Building upon this foundation, Grašič et al.\ \cite{gra26spancapacities} recently generalized this framework by introducing the notion of \emph{$d$-capacity}. While graph spans quantify the maximum distance that two players can maintain while traversing a graph, $d$-capacities address the complementary topological question: what is the largest number of players that can simultaneously traverse a network while preserving a prescribed minimum distance $d$? Three variants of the parameter---corresponding to strong, direct, and Cartesian movement---were introduced. Initial research established exact capacity values for basic topologies like paths and cycles, provided bounds for bipartite graphs, characterized graphs attaining the maximum possible $1$-capacity, and uncovered strong connections to classical concepts like graph factorizations, Hamiltonicity, and connectivity \cite{gra26spancapacities}. In this paper, we significantly expand upon these findings by investigating the Cartesian $1$-capacity of graphs (representing lazy---where exactly one person moves at a time---movement at distance $1$), with a particular focus on establishing exact structural bounds for trees.

Given our agenda, it is worth noting a historical and mechanical connection between the Cartesian capacity, specifically when $\ccap{1}{G} = |V(G)| - 1$, and the classical movers' problem on graphs. In 1984, Kornhauser, Miller, and Spirakis \cite{Kornhauser1984} established the foundational mechanics of coordinating pebbles moving one at a time to adjacent unoccupied vertices. Mathematically, their framework perfectly mirrors our patient movement rules for $n-1$ players utilizing a single vacancy. Following their seminal work, decades of research heavily focused on algorithmic reachability and the computational complexity of navigating bottlenecks on trees \cite{Auletta1999}. 

Our work fundamentally shifts this perspective. Rather than employing algorithmic simulation to ask \textit{if} a specific puzzle can be solved, the Cartesian $1$-capacity defines the absolute maximum topological density of agents (number of players) a tree can support while maintaining total systemic mobility (visiting every vertex). By doing so, we prove that routing feasibility is intrinsically tied to a topological graph invariant ($\brig(T)$), bypassing the need for state-by-state algorithmic checks. 

Furthermore, our definitions formally unify several disparate computational bottlenecks recently identified across modern Multi-Agent Path Finding (MAPF) and pebble motion literature. In 2024, Ardizzoni et al.\ \cite{Ardizzoni2024} demonstrated that the complexity of routing on trees is bottle-necked by the maximum length of so-called \emph{corridors} (degree-2 paths), which are structurally identical to the $M$-bridges defined in this paper. Similarly, recent parametrized MAPF algorithms rely on identifying \emph{havens} (vertices with degree $3$ or higher) to temporarily store agents and resolve traffic \cite{Deligkas2025}. Our \emph{branch-vertex $N$-dense} property provides a mathematically rigorous formalization of this mechanism. Beyond theoretical path-finding, these invariants have immediate practical applications. For example, recent operations research models the Train Unit Shunting Problem as pebble motion on tree-structured railway yards \cite{hanou2023moving}. Our invariants could be directly applied to calculate the absolute theoretical limits on the number of train units a yard can support. Moreover, evaluating optimal shortest move sequences for pebble motion on trees remains an active area of investigation \cite{Nakamigawa2025}; by formally bounding total mobility via unbranched corridors ($\brig(T)$), our framework provides a geometric tool to establish tighter bounds on solution sequences.

The importance of these structural limits is further reinforced by parallel research in token-swapping and agent dispersion. Aichholzer et al.\ \cite{Aichholzer2022} and Biniaz et al.\ \cite{dmtcs:8383} highlighted the difficulty of token swapping on trees, emphasizing that the problem is notoriously NP-hard on subdivided stars. Because a subdivided star is topologically equivalent to a tree with many arbitrarily long $M$-bridges originating from a single central vertex, our theorems explicitly identify the geometric features responsible for these algorithmic failures. Finally, our approach parallels recent advancements in the near-linear time dispersion of mobile agents \cite{Sudo-et-al-2026}. While dispersion algorithms focus on the mechanics of untangling densely packed agents to unique nodes, the Cartesian $1$-capacity dictates the absolute topological threshold at which any such routing remains structurally feasible.

The paper is organized as follows. Section \ref{sec:definitions} presents the necessary definitions and preliminary concepts concerning patient marches, tours, graph spans, and Cartesian capacity. In Section \ref{sec:bounds} we investigate the Cartesian $1$-capacity of graphs. We first establish a general lower bound for branch-vertex $N$-dense graphs and then derive an upper bound based on the existence of  $M$-bridge structures. These results are subsequently combined in Section \ref{sec:trees}, to obtain exact values and structural characterizations of the Cartesian 1-capacity for trees. Section \ref{sec:conslusion} provides some ideas for further work.

\section{Basic definitions and terminology}\label{sec:definitions}

In this section we establish main terminology and definitions used throughout this paper. All graphs considered in this paper are finite, connected, simple, and undirected. For any graph theoretic notions not explicitly defined here, see \cite{west2001introduction}. Most of definitions used here were already defined in \cite{gra26spancapacities}, we restate them for completeness of this paper.

Let $G$ be a connected graph and $\ell$ be a non-negative integer. A \emph{weak walk} $W$ on $G$ is a sequence of vertices $w_0, w_1, \ldots, w_\ell$ in $V(G)$ such that $w_i w_{i+1} \in E(G)$ or $w_i = w_{i+1}$ for all $i\in\{0, 1, \ldots, \ell-1\}$. We may also call it a weak $\ell$-walk. Note that if $w_i w_{i+1} \in E(G)$ for every $i\in\{0, 1, \ldots, \ell-1\}$, then this is the common notion of a \emph{walk} in $G$. Thus, every walk is also a weak walk. Note, $\ell$ may be 0, hence a (weak) $0$-walk is just one vertex. Moreover, since we consider (weak) walks parametrized by time, we may refer to indices of the walks as to \emph{points in time}. For this reason we also call a function $f: \{0,1,\ldots, \ell\}\rightarrow V(G)$ with $f(t)f(t+1)\in E(G)$, for any $t\in\{0,1,\ldots, \ell-1\}$, a walk. Similarly, we call a function $f: \{0,1,\ldots, \ell\}\rightarrow V(G)$ with $f(t)=f(t+1)$ or $f(t)f(t+1)\in E(G)$, for any $t\in\{0,1,\ldots, \ell-1\}$, a weak walk. 

\begin{definition}[\cite{BaTa23}]\label{def:distance-tracks}
Let $G$ be a connected graph, $\ell$ a non-negative integer and $f,g:\{0,\ldots,\ell\}\rightarrow V(G)$ functions. The \emph{distance between $f$ and $g$} is defined as
\[
m_G(f,g)= \min\{d_G( f(t), g(t)) \mid t \in \{0,\ldots,\ell\}\}.
\]
\end{definition}

\begin{definition}[\cite{gra26spancapacities}]\label{def:distance-multiple-tracks}
Let $G$ be a connected graph, $p\geq 2$ a natural number and $\ell$ a non-negative integer. Let $f_1, f_2, \ldots, f_p:\{0,\ldots,\ell\}\rightarrow V(G)$ be functions. The distance between $f_1, f_2, \ldots, f_p$ is 
\[
    m_G(f_1, f_2, \ldots, f_p) = \min\{ m_G(f_i, f_j) \mid i,j\in \{1, 2, \ldots, p\} \text{ and } i\not=j  \}.
\]
\end{definition}

\begin{definition}[\cite{gra26spancapacities}]
Let $G$ be a connected graph, $p$ a natural number and $\ell$ a non-negative integer. $F=\{f_1,\ldots,f_p\}$ is called a \emph{(weak) $\ell$-march} on $G$ if each $f_i:\{0,1,\ldots,\ell\}\rightarrow V(G)$ is a (weak) walk on $G$. Also, $F(t):=\{f_1(t),\ldots,f_p(t)\}$ is called the \emph{$t$\textsuperscript{th} stage of $F$.} The number $\ell$ is called the length of $F$.
\end{definition}

When we say that $F=\{f_1,\ldots,f_p\}$ is a (weak) march on $G$, this means that there exists a non-negative integer $\ell$ such that $F$ is a (weak) $\ell$-march on $G$.

For a (weak) march $F=\{f_1,\ldots,f_p\}$ on some connected graph $G$, we define 
$$m_G(F):=m_G(f_1, f_2, \ldots, f_p).$$ Also, if $p=1$ define $m_G(F):=\infty$.

\begin{definition}[\cite{gra26spancapacities}]
Let $G$ be a connected graph, and $F=\{f_1,\ldots,f_p\}$ a (weak) march on $G$. We say that $F$ is a \emph{collision-free} (weak) march if $m_G(F)\geq 1$.
\end{definition}

\begin{definition}[\cite{gra26spancapacities}]\label{def:orbit}
Let $G$ be a connected graph and let $F=\{f_1,\ldots,f_p\}$ be a collision-free (weak) $\ell$-march on $G$. If $v=f_s(0)\in\{f_1(0),\ldots,f_p(0)\}$, then the
\emph{orbit of $v$ under $F$} is 
\[\orb(F,v)=\orb(f_s,v)=\{f_s(0),f_s(1),\ldots,f_s(\ell)\}.\]
\end{definition} 

\begin{definition}[\cite{gra26spancapacities}]\label{def:extension}
Let $G$ be a connected graph, $p$ a natural number, and let $F_1=\{f^1_1,\ldots,f^1_p\}$ be a (weak) $\ell_1$-march  on $G$ and $F_2=\{f^2_1,\ldots,f^2_p\}$ be a (weak) $\ell_2$-march on $G$ such that $F_1(\ell_1)=F_2(0)$. An \emph{extension of $F_1$ with $F_2$} is a (weak) $(\ell_1+\ell_2)$-march $F_2F_1=\{f_1,\ldots,f_p\}$ on $G$ defined by 
\[
f_s(i) = \begin{cases}
            f^1_s(t), & \text{if } t\in \{0,\ldots,\ell_1\},\\
            f^2_{k_s}(t-\ell_1), & \text{if } t\in \{\ell_1+1,\ldots,\ell_1+\ell_2\}
         \end{cases},
\]
where $k_s$ is the index such that  $f^1_s(\ell_1)=f^2_{k_s}(0)$. Inductively, we define $F_n F_{n-1} \ldots F_2 F_1=F_n(F_{n-1} \ldots F_2 F_1)$ in the same manner.
\end{definition}

\begin{remark}[\cite{gra26spancapacities}]\label{rem:extension-cf}
	If both $F_1=\{f^1_1,\ldots,f^1_p\}$ and $F_2=\{f^2_1,\ldots,f^2_p\}$ are collision-free (weak) $\ell_1$- and $\ell_2$-marches, respectively, then $F_2F_1$ is also a collision-free (weak) $(\ell_1+\ell_2)$-march.
\end{remark}

\begin{definition}[\cite{gra26spancapacities}]\label{def:reverse-march}
	Let $G$ be a connected graph. Let $F=\{f_1,\ldots,f_p\}$ be a (weak) $\ell$-march on $G$. The \emph{reverse of $F$}, denoted by $F^{-1}=\{\widehat{f}_1,\ldots,\widehat{f}_p\}$, 
	is the (weak) $\ell$-march on $G$ defined by $\widehat{f}_s(t)=f_s(\ell-t)$ for all $s\in \{1,\ldots,p\}$ and $t\in\{0,\ldots,\ell\}$.
\end{definition}

\begin{definition}[\cite{gra26spancapacities}]\label{def:patient-march}
Let $G$ be a connected graph and $F=\{f_1,\ldots,f_p\}$ be a weak $\ell$-march on $G$. We say that $F$ is \emph{patient} if for each $t\in\{0,\ldots,\ell-1\}$ there exists $i\in\{1, 2, \ldots, p\}$ such that $f_i(t)f_i(t+1)\in E(G)$ and $f_{j}(t)=f_{j}(t+1)$ for each $j\in\{1, 2, \ldots, p\}\setminus\{i\}$.
\end{definition}

\begin{remark}[\cite{gra26spancapacities}]
If $F=\{f_1\}$ is a patient $\ell$-march on a connected graph $G$, then Definition \ref{def:patient-march} implies that $f_1$ is a walk on $G$.
\end{remark}

\begin{definition}[\cite{gra26spancapacities}]\label{def:dual-march}
 Let $G$ be a connected graph of order $n$. Suppose that $F=\{f_1,\ldots,f_p\}$ is a patient $\ell$-march on $G$. The \emph{dual of $F$} is a patient $\ell$-march $D_F=\{g_1,\ldots,g_{n-p}\}$ on $G$ such that
 $\{f_1(t),\ldots,f_p(t)\}\cup \{g_1(t),\ldots,g_{n-p}(t)\}=V(G)$, for any $t\in\{0,1,\ldots,\ell\}$. 
\end{definition}

With respect to Definition \ref{def:dual-march}, we think of $\{f_1(t),\ldots,f_p(t)\}$ as the \emph{occupied vertices} and $\{g_1(t),\ldots,g_{n-p}(t)\}$ as the \emph{vacant vertices} of $G$ at stage $t$. Note that this implies that $f_i(t)\not=g_j(t)$ for all applicable $i,j,t$. 

Also note that if a patient march $F$ on $G$ is defined, then its dual $D_F$ is automatically defined. Also, $F=D_{D_F}$ is the dual of $D_F$, so it follows that if $D_F$ is defined, then $F$ is also defined. 

The (weak) tracks, which are (weak) walks through all the vertices of a given graph, were defined in \cite{Erceg23}, but we state them here for clarity. 

\begin{definition}\cite{Erceg23} Let $G$ be a connected graph and $\ell$ a non-negative integer. We say that a (weak) $\ell$-walk $f : \{0,\ldots,\ell\} \rightarrow V(G)$ is a \emph{(weak) $\ell$-track on $G$} if $f$ is surjective.
\end{definition}

\begin{definition}[\cite{gra26spancapacities}]
    Let $G$ be a connected graph and let $F=\{f_1,\ldots,f_p\}$ be a (weak / patient) $\ell$-march on $G$. If $f_i$ is a (weak) $\ell$-track for all $i\in \{1,\ldots,p\}$ we say that $F$ is a \emph{(weak / patient) $\ell$-tour on $G$}.
\end{definition}
   
\begin{remark}
We follow the terminology established in \cite{gra26spancapacities}. Specifically, for a connected graph $G$, where the order of the domain is not important for the context, we say that a function $f$ is a (weak) track on $G$ meaning that there exists a non-negative integer $\ell$ such that $f$ is a (weak) $\ell$-track. Similarly, we say that $F$ is a (weak / patient) tour meaning that there exists a non-negative integer $\ell$ such that $F$ is a (weak / patient) $\ell$-tour.
\end{remark}

We now restate the definition of the Cartesian vertex span. This was introduced in \cite{BaTa23}. 

\begin{definition}[\cite{BaTa23}]\label{def:allSpans}
Let $G$ be a connected graph. The \emph{Cartesian vertex span} of the graph $G$, denoted $\cvSpan{G}$, is $$\cvSpan{G} = \max \{ m_G(f,g)  \mid  \ell\in \mathbb{Z}_{\ge 0} \text{ and } \{f,g\} \text{ is a patient $\ell$-tour on $G$} \}.$$
\end{definition}

Now, we give the definitions of Cartesian $d$-capacity corresponding to the Cartesian vertex span. 

\begin{definition}[\cite{gra26spancapacities}]\label{def:Cartesian-capacity}
    Let $G$ be a non-trivial graph different from a path and $d \leq \sigma^\square_V(G)$ a natural number. We define the \emph{Cartesian $d$-capacity} of graph $G$, denoted by $\ccap{d}{G}$, as the maximum integer $c$ such that there exists a patient $\ell$-tour $F=\{f_1,f_2,\ldots,f_c\}$ on $G$ satisfying $m_G(F)= d$.
    If $d=\sigma^\square_V(G)$, we call it the \emph{Cartesian capacity} and denote it by $\ccap{}{G}$.
\end{definition}

Note, in Definition \ref{def:Cartesian-capacity} we require the graph to be non-trivial and not a path. This is due to the fact that the Cartesian vertex span in these cases equals to 0. If we were to allow for $d$ to equal zero in the above definitions, those corresponding capacities would be unbounded and of no interest for research. Hence, all observed graphs with respect to Cartesian $d$-capacity have the Cartesian vertex span at least 1, so $d$ being a natural number less than or equal to the corresponding span is well-defined. 

\section{Bounds on Cartesian 1-capacity}\label{sec:bounds}

We start with some definitions needed for this section. 
Let $G$ be a graph and $S\subseteq V(G)$. We use $G[S]$ to denote the subgraph of $G$ induced by the vertices $S$, and $G-S$ to denote the graph $G[V(G)\setminus S]$. When $S=\{v\}$ we will use the notation $G-v$ instead of $G-\{v\}$. If $e\in E(G)$, we use $G-e$ to denote the graph defined by $V(G-e)=V(G)$ and $E(G-e)=E(G)\setminus\{e\}$.

\begin{proposition}\label{together}
	Let $G$ be a connected graph, $H\subseteq V(G)$ such that $p=|H|$. Suppose for every $v\in H$ there exists a patient collision-free $\ell$-march $F=\{f_1,\ldots,f_p\}$ such that $F(0)=H$ and $\orb(F,v)=V(G)$. Then $\ccap{1}{G}\geq p$.
\end{proposition}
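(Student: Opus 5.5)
The idea is to concatenate the given marches, each followed by its reverse, so that every player in turn gets a march in which it sweeps through all of $V(G)$. The key facts are that reversing a march returns the configuration to $H$, and that extensions of patient collision-free marches remain patient and collision-free (Remark \ref{rem:extension-cf}).

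First I fix notation. Write $H=\{v_1,\ldots,v_p\}$. For each $i$ let $F_i$ be a patient collision-free march with $F_i(0)=H$ and $\orb(F_i,v_i)=V(G)$, as the hypothesis provides. Let $\ell_i$ denote its length.

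Next I record the properties of the reverse $F_i^{-1}$ from Definition \ref{def:reverse-march}. Patience and collision-freeness are symmetric in time, so $F_i^{-1}$ is again a patient collision-free march. Moreover $F_i^{-1}(0)=F_i(\ell_i)$ and $F_i^{-1}(\ell_i)=H$. Hence the extension $B_i:=F_i^{-1}F_i$ is a patient collision-free march that starts and ends at the stage $H$, by Remark \ref{rem:extension-cf}.

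The main bookkeeping point is player identity. In $B_i$, the walk starting at $v_i$ traverses $\orb(F_i,v_i)$ and then retraces it back to $v_i$. So every walk of $B_i$ returns to its own starting vertex, and $B_i$ induces the identity permutation on $H$. The stage-matching condition $B_i(\text{end})=B_{i+1}(0)=H$ therefore holds.

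I then form the extension $F:=B_pB_{p-1}\cdots B_1$ as in Definition \ref{def:extension}. By Remark \ref{rem:extension-cf} it is a patient collision-free march. Since each $B_i$ fixes every player's position, the indices $k_s$ in Definition \ref{def:extension} are all trivial. Thus the player starting at $v_i$ is the player starting at $v_i$ during the block $B_i$, and within that block its orbit is all of $V(G)$. Consequently every walk of $F$ is surjective, and $F$ is a patient tour with $p$ players and $m_G(F)\geq 1$.

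It remains to get $m_G(F)=1$ exactly, as Definition \ref{def:Cartesian-capacity} requires. Patience guarantees that at each step some player moves along an edge $xy$. For $p\ge 2$, and ignoring degenerate cases, some player moves at some step, say to a vertex $y$. I would argue that some other player is eventually at distance $1$ from it. Failing a direct argument, I would append a short patient detour that brings two players adjacent and then reverse it.

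I expect this exact-distance requirement to be the main obstacle, though likely minor. The cleanest route is probably to note that capacity with $m_G(F)\ge 1$ is the intended reading, or that in the paper's convention (from \cite{gra26spancapacities}) a tour with $m_G(F)\ge d$ witnesses $\ccap{d}{G}\ge p$. Adjacency is in fact forced whenever $H$ induces some edge.
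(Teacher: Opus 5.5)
Your proposal is correct and is essentially the paper's proof. The paper forms exactly $F=F_p^{-1}F_p\cdots F_1^{-1}F_1$ and observes that each $F_i^{-1}F_i$ is closed with $\orb(F_i^{-1}F_i,v_i)=V(G)$; your explicit check that every block returns each player to its own starting vertex just spells out what the paper means by ``closed''.

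The paper never addresses the requirement $m_G(F)=1$, so your extra care there goes beyond it. However, the direct argument you float fails in general: two players circling a long cycle in step are never adjacent. Use your fallback instead. When $p\ge 2$, move the player at some $a\in H$ along a shortest path to a nearest $b\in H\setminus\{a\}$; the interior of that path is vacant by minimality. Stop when the player is adjacent to $b$, then reverse the detour. This returns everyone to $H$ and forces $m_G(F)=1$.
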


\begin{proof} Let $\{v_1,\ldots,v_p\}=H$ and for each $i\in\{1,\ldots,p\}$ let $F_i=\{f^i_1,\ldots,f^i_p\}$ be a weak $\ell_i$-march such that $F_i(0)=H$ and $\orb(F_i,v_i)=V(G)$. Then $F_i^{-1}F_i$ is closed and  $\orb(F_i^{-1}F_i,v_i)=V(G)$ for each $i$. Let $F=F_p^{-1}F_p\ldots F_1^{-1}F_1$. Then $\orb(F,v_i)=V(G)$ for each $i\in\{1,\ldots,p\}$ and it follows that $\ccap{1}{G}\geq p$.
\end{proof}

\subsection{A lower bound for Cartesian 1-capacity of a very general collection of graphs}\label{low_Car}

\begin{definition}
Let $P$ be a path of a connected graph $G$ and $H\subseteq V(G)$. We say that $P$ is an {\it occupied path of $H$} if $V(P)\subseteq H$, and $P$ is a {\it vacant path of $H$} if $V(P)\cap H=\emptyset$.
\end{definition}

\begin{proposition}\label{vacant} 
Let $G$ be a connected graph, $a\in H\subseteq V(G)$, $p=|H|$ and let $P=v_1v_2\ldots v_{k}$ be a vacant path of $H$ such that $a$ is adjacent to $v_1$. Then there exists a patient collision-free $k$-march $F=\{f_1,\ldots,f_p\}$ on $G$ such that
	\begin{enumerate}
		\item $H = F(0)$ 
		\item for some $i\in \{1,\ldots,p\}$, it holds that $f_i(0)=a$ and $f_i(k)=v_{k}$
		\item $f_j(t)=f_j(t+1)$ for all $j\in \{1,\ldots,p\}\setminus \{i\}$ and $t \in \{0,1,\ldots,{k-1}\}$
		\item $av_1v_2\ldots v_{{k}-1}$ is a vacant path of $F({k})$.
	\end{enumerate}
\end{proposition}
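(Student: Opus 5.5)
The construction is direct: the agent at $a$ walks along the vacant path $P$ while every other agent stays put. Since $a\in H$, fix the index $i$ with $f_i(0)=a$. Set
\[
f_i(0)=a,\qquad f_i(t)=v_t \quad\text{for } t\in\{1,\ldots,k\},
\]
and $f_j(t)=a_j$ for all $t\in\{0,\ldots,k\}$ and all $j\neq i$, where $\{a_j : j\neq i\}=H\setminus\{a\}$. Properties 1 and 3 then hold by construction, and property 2 holds with $f_i(k)=v_k$.

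\textbf{Walks and patience.} First, $f_i$ is a walk: $av_1\in E(G)$ by hypothesis, and $v_tv_{t+1}\in E(G)$ because $P$ is a path. The remaining $f_j$ are constant, hence weak walks. For each step $t\to t+1$, exactly $f_i$ traverses an edge and all others stay fixed, so $F$ is a patient $k$-march.

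\textbf{Collision-freeness.} This is the only point needing care. We must check $f_i(t)\neq f_j(t)$ for all $t$ and all $j\neq i$; constant walks never collide with one another, since the $a_j$ are distinct elements of $H$.
\begin{itemize}
\item At $t=0$, $f_i(0)=a$, and $a$ differs from every other element of $H$.
\item For $t\geq 1$, $f_i(t)=v_t\notin H$, because $P$ is vacant for $H$. Hence $v_t\neq a_j$.
\end{itemize}
Thus $m_G(F)\geq 1$.

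\textbf{Property 4.} At stage $k$ we have $F(k)=(H\setminus\{a\})\cup\{v_k\}$.
\begin{itemize}
\item $a\notin F(k)$, since $a$ was vacated and $a\neq v_k$ (as $v_k\notin H$ while $a\in H$).
\item Each $v_1,\ldots,v_{k-1}$ lies outside $H$ and differs from $v_k$ because $P$ is a path, so none of them is in $F(k)$.
\end{itemize}
It remains to see that $av_1\ldots v_{k-1}$ is a path. The consecutive adjacencies hold, and the vertices are distinct because $a\in H$ while all $v_t\notin H$, and the $v_t$ are pairwise distinct. Hence this path is vacant for $F(k)$.

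The degenerate case $k=1$ gives the single-vertex path $a$, which is trivially vacant. No real obstacle arises; the main thing is the bookkeeping, namely that $a$ is distinct from every $v_t$ and that no $v_t$ lies in $H$.
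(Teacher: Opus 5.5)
Your proposal is correct and is the same construction as the paper's proof: the agent at $a$ walks along $P$ while every other agent stays fixed. You also verify collision-freeness and property 4 explicitly (using $a\in H$ and $v_t\notin H$), which the paper leaves largely implicit.
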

\begin{proof}
	Let us denote the vertices of $H$ by $\{a,a_2,a_3,\ldots,a_p\}$. We construct a march $F=\{f_1,\ldots,f_p\}$ in the following way:
\begin{align*}
f_1(0)&=a;\\
f_1(t)&=v_{t} \text{ for all } t \in \{1,\ldots,k\};\\
f_i(t)&=a_i, \text{ for all } i \in \{2,\ldots,p\} \text{ and all } t \in \{0,\ldots,k\}.   
\end{align*}   
Clearly, $F(0)=H$, for $i=1$ we have $f_1(0)=a$ and $f_1(k)=v_k$, and $F(k)\cap\{a,v_1,\ldots,v_{k-1}\}=\emptyset$.
\end{proof}

\begin{lemma}\label{shift} Let $G$ be a connected graph, $P=v_0v_1\ldots v_n$ a path in $G$ and $S\subseteq V(G)$ such that $v_0\in S$, $v_n\not \in S$ and $\{v_{\alpha_0},v_{\alpha_1},\ldots,v_{\alpha_k}\}=S\cap V(P)$, where $0=\alpha_0<\alpha_1 <\ldots<\alpha_k<n$. Then there exists a patient collision-free $n$-march $F=\{f_1,\ldots,f_p\}$ on $G$, where $p=|S|$, and $\{s_0,s_1,\ldots,s_k\}\subseteq \{1,\ldots,p\}$ such that 
	\begin{enumerate}
		\item $F(0)=S$
		\item $f_{s_j}(0)=v_{\alpha_j}$, for all $j\in \{0,\ldots,k\}$
		\item  $f_{s_j}(n)=v_{\alpha_{j+1}}$, for all $j\in \{0,\ldots,k-1\}$
		\item $f_{s_k}(n)=v_{n}$
		\item $f_i(0)=f_i(n)$, for all $i\not\in \{s_0,s_1,\ldots,s_k\}$.
	\end{enumerate}
\end{lemma}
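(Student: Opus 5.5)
The plan is to realize the shift by moving the occupied vertices of $P$ one at a time, starting from the one nearest to $v_n$. The gaps between consecutive occupied vertices on $P$ are vacant subpaths, so Proposition~\ref{vacant} applies directly. Set $\alpha_{k+1}:=n$. For $j=k,k-1,\ldots,0$ (in this order) we move the agent sitting at $v_{\alpha_j}$ along $P$ to $v_{\alpha_{j+1}}$. This takes $\alpha_{j+1}-\alpha_j$ steps, and all other agents stand still.

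First we check that each stage is valid. When we process index $j$, the vertices $v_{\alpha_j+1},\ldots,v_{\alpha_{j+1}}$ are vacant. For $j=k$ this holds because $v_n\notin S$ and no vertex of $P$ strictly between $v_{\alpha_k}$ and $v_n$ lies in $S$. For $j<k$ it holds because the agent formerly at $v_{\alpha_{j+1}}$ has already been moved further along $P$ to $v_{\alpha_{j+2}}$, and the interior vertices between $\alpha_j$ and $\alpha_{j+1}$ were never in $S$. We can therefore apply Proposition~\ref{vacant}. Take the current occupied set as $H$, take $a=v_{\alpha_j}$, and take the vacant path $v_{\alpha_j+1}\ldots v_{\alpha_{j+1}}$ of length $\alpha_{j+1}-\alpha_j$. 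This yields a patient collision-free march of that length in which only the agent starting at $v_{\alpha_j}$ moves, and it ends at $v_{\alpha_{j+1}}$. Collision-freeness is immediate, since the moving agent only ever enters vacant vertices.

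Next we concatenate the $k+1$ marches via extension (Definition~\ref{def:extension}). By Remark~\ref{rem:extension-cf} the result is a patient collision-free march. Its length is $\sum_{j=0}^{k}(\alpha_{j+1}-\alpha_j)=\alpha_{k+1}-\alpha_0=n$, as required.

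It remains to read off the properties. Let $s_j$ be the index of the walk with $f_{s_j}(0)=v_{\alpha_j}$. That walk moves only during stage $j$, so $f_{s_j}(n)=v_{\alpha_{j+1}}$, and in particular $f_{s_k}(n)=v_n$. Every other agent never moves, which gives property 5. Patience holds within each stage and is preserved under concatenation.

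The main obstacle is bookkeeping rather than substance. Within the extension, the indices of the concatenated marches must be tracked so that $s_j$ refers to one consistent walk throughout, as handled by the $k_s$ in Definition~\ref{def:extension}. The processing order is also essential: processing from $j=k$ downward is exactly what guarantees that the target $v_{\alpha_{j+1}}$ is vacant at the moment it is needed.
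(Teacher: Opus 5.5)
Your proof is correct and is essentially the paper's argument. The paper also applies Proposition~\ref{vacant} to the vacant gaps $v_{\alpha_j+1}\ldots v_{\alpha_{j+1}}$, starting with the agent at $v_{\alpha_k}$ and working back to $v_{\alpha_0}$, and concatenates the resulting marches by extension. (A minor wording point: the vacant path $v_{\alpha_j+1}\ldots v_{\alpha_{j+1}}$ has $\alpha_{j+1}-\alpha_j$ vertices, which is the length of the march produced by Proposition~\ref{vacant}, not the length of the path itself.)
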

\begin{proof} 
Notice that $v_{\alpha_k+1}v_{\alpha_k+2}\ldots v_n$ is a vacant path of $S$. Let $\ell_k=n-\alpha_k$. Then, by Proposition \ref{vacant} there exists a patient collision-free $\ell_k$-march  $F_k=\{f^k_1,\ldots,f^k_p\}$ on $G$ and an $s_k\in\{1,\ldots,p\}$ such that $F_k(0)=S$, $f^k_{s_k}(0)=v_{\alpha_k}$, $f^k_{s_k}(\ell_k)=v_{n}$ and $f_j^k(0)= f_j^k(\ell_k)$ for all $j\in \{1,\ldots,p\}\setminus\{s_k\}$. Let $\{s_0,\ldots,s_k\}\subseteq \{1,\ldots,p\}$ such that $f^k_{s_i}(0)=v_{\alpha_i}$ for all $i\in \{0,\ldots,k\}$.

Continuing inductively, suppose that for some $j$ ($0 < j \leq k$), the sequence of patient collision-free marches $F_k, F_{k-1}, \ldots, F_j$ has been found such that $F_m(0)=F_{m+1}(\ell_{m+1})$ for all $m \in \{j, \ldots, k-1\}$, and $v_{\alpha_{j}}\not \in F_j(\ell_j)$. Then define $f^{j-1}_i(0)=f^{j}_i(\ell_j)$ for each $i\in \{1,\ldots,p\}$, so $f^{j-1}_{s_{j-1}}(0)=v_{\alpha_{j-1}}$. Notice that $v_{\alpha_{j-1}+1}v_{\alpha_{j-1}+2}\ldots v_{\alpha_j}$ is a vacant path of $F_{j-1}(0)$. Let $\ell_{j-1}=\alpha_j-\alpha_{j-1}$. Then, by Proposition \ref{vacant} there exists a patient collision-free $\ell_{j-1}$-march  $F_{j-1}=\{f^{j-1}_1,\ldots,f^{j-1}_p\}$  such that  $f^{j-1}_{s_{j-1}}(0)=v_{\alpha_{j-1}}$, $f^{j-1}_{s_{j-1}}(\ell_{j-1})=v_{\alpha_j}$ and $f_i^{j-1}(0)= f_i^{j-1}(\ell_{j-1})$ for all $i\not=s_{j-1}$.

Then $F := F_0 F_1 \ldots F_k$ has the prescribed properties.
\end{proof}

\begin{figure}[!htbp]
    \centering
\begin{subfigure}{.3\textwidth}
\begin{tikzpicture} [scale=1]
\draw (1,1)--(1,3)--(5,3) (1,2)--(3,2)--(3,3) (3,2)--(4,1);
\draw [blue, thick] (1,1)--(1,2)--(3,2)--(3,3)--(5,3);
\filldraw [redball] (1,1) circle (3pt) node[below] {$\equalto{v_{\alpha_0}}{v_0}$};
\filldraw [whiteball] (4,1) circle (3pt);
\filldraw [whiteball] (1,2) circle (3pt);
\filldraw [greenball] (2,2) circle (3pt) node[below] {$v_{\alpha_1}$};
\filldraw [whiteball] (3,2) circle (3pt);
\filldraw [whiteball] (1,3) circle (3pt);
\filldraw [whiteball] (2,3) circle (3pt);
\filldraw [whiteball] (3,3) circle (3pt);
\filldraw [blueball] (4,3) circle (3pt) node[below] {$v_{\alpha_k}$};
\filldraw [whiteball] (5,3) circle (3pt) node[below] {$v_{n}$};
\end{tikzpicture}
\caption{Initial stage}
\end{subfigure}
\hspace{30 pt}
\begin{subfigure}{.3\textwidth}
\begin{tikzpicture} [scale=1]
\draw (1,1)--(1,3)--(5,3) (1,2)--(3,2)--(3,3) (3,2)--(4,1);
\draw [blue, thick] (1,1)--(1,2)--(3,2)--(3,3)--(5,3);
\filldraw [whiteball] (1,1) circle (3pt) node[below] {$\equalto{v_{\alpha_0}}{v_0}$};
\filldraw [whiteball] (4,1) circle (3pt);
\filldraw [whiteball] (1,2) circle (3pt);
\filldraw [redball] (2,2) circle (3pt) node[below] {$\equalto{v_{\alpha_1}}{f_{s_0}(n)}$};
\filldraw [whiteball] (3,2) circle (3pt);
\filldraw [whiteball] (1,3) circle (3pt);
\filldraw [whiteball] (2,3) circle (3pt);
\filldraw [whiteball] (3,3) circle (3pt);
\filldraw [greenball] (4,3) circle (3pt) node[below=2pt] {$\equalto{v_{\alpha_k}}{f_{s_{k-1}}(n)}$};
\filldraw [blueball] (5,3) circle (3pt) node[below] {$\equalto{v_n}{f_{s_k}(n)}$};
\end{tikzpicture}
\caption{Final stage}
\end{subfigure}
    \caption{Shift from $v_0$ to $v_n$}
    \label{fig:figure_shift}
\end{figure}

A patient collision-free march $F$ that satisfies the properties of Lemma \ref{shift} is called a {\it shift on $P$} or a {\it shift in the direction from $v_0$ to $v_n$.} Figure \ref{fig:figure_shift} shows a shift in the direction from $v_0$ to $v_n$, where the corresponding path is shown with blue edges.
The next step is to show that occupied vertices can be repositioned in a controlled manner. A shift moves an occupancy along a path while preserving a collision-free patient march. By repeatedly applying such shifts, Proposition \ref{cover} establishes that any configuration of occupied vertices can be transformed into any other configuration of the same size.

\begin{proposition}\label{cover}
	Let $G$ be a connected graph and $H, K\subseteq V(G)$ such that $|H|=|K|=p$. Then, for some $\ell\in \mathbb{N}$ there exists a patient collision-free $\ell$-march $F=\{f_1,\ldots,f_p\}$ such that $F(0)=H$ and $F(\ell)=K$.
\end{proposition}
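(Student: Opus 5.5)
We argue by induction on $|H\setminus K|$, using shifts from Lemma \ref{shift} and concatenating the resulting marches by extension. By Remark \ref{rem:extension-cf}, an extension of patient collision-free marches is again a patient collision-free march, so it is enough to find a finite sequence of such marches that starts at $H$ and ends at $K$. (If we need $\ell\ge 1$ in the case $H=K$, we move one token to a vacant neighbour and back. This is possible whenever $p<|V(G)|$; the case $H=K=V(G)$ would need $\ell=0$ or is excluded.)

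For the inductive step, suppose $H\neq K$. Then we can pick $u\in H\setminus K$ and $w\in K\setminus H$. Since $G$ is connected, there is a path $P=v_0v_1\ldots v_n$ with $v_0=u$ and $v_n=w$. We have $v_0\in H$ and $v_n\notin H$, so $S=H$ and $P$ satisfy the hypotheses of Lemma \ref{shift}. The lemma gives a patient collision-free $n$-march $F_1$ with $F_1(0)=H$.

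We then compute the final stage $F_1(n)$. Tokens off $P$ stay put. The occupied vertices of $P$ are $v_{\alpha_0},\ldots,v_{\alpha_k}$, and after the shift they become $v_{\alpha_1},\ldots,v_{\alpha_k},v_n$. Hence $F_1(n)=(H\setminus\{u\})\cup\{w\}$. This set has size $p$, and its symmetric difference with $K$ is smaller by two than that of $H$: we removed $u\notin K$ and added $w\in K$. Applying the induction hypothesis to $F_1(n)$ and $K$ gives a march $F_2$ from $F_1(n)$ to $K$. The extension $F_2F_1$ is the required march.

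The only delicate point is the bookkeeping in the extension (Definition \ref{def:extension}). The tokens are relabelled through the indices $k_s$, and we must check that the stage sets match, namely $F_1(n)=F_2(0)$. This is exactly what the computation of $F_1(n)$ above shows. Otherwise the argument is a direct iteration of Lemma \ref{shift}.
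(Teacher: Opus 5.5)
Your proof is correct and follows essentially the same route as the paper: repeatedly apply a shift (Lemma \ref{shift}) from an occupied vertex outside $K$ to a vacant vertex of $K$, which changes the occupied set to $(H\setminus\{u\})\cup\{w\}$, and concatenate the shifts by extension. The only difference is that the paper chooses $x_m$ at minimum distance from $y_m$ and uses a shortest path, whereas you allow an arbitrary pair and an arbitrary path; your computation of $F_1(n)$ shows that this extra choice is not needed.
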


    \begin{proof}
	 We are going to define $H_t$ to be the occupied vertices of $H\setminus K$ and $K_t$ to be the vacant vertices of $K$ at stage $t$. First let $H_0=H\setminus K$ and $K_0=K\setminus H$. If $H=K$, we are done. Otherwise, pick $y_0\in K_0$ and choose $x_0\in H_0$ such that $\mathrm{d}(x_0,y_0)= \mathrm{d}(H_0,y_0)=:d_0$. Let $P_0=v^0_0v^0_1\ldots v^0_{d_0}$ be a path such that $x_0=v^0_0$ and $v^0_{d_0}=y_0$. Let $\{\alpha^0_j\}_{j=0}^{k_0}$ be an increasing sequence such that  $\{v^0_{\alpha^0_j}\}_{j=0}^{k_0}=H \cap V(P_0)$. These are the occupied vertices on the path. Note $H_0\cap \{v^0_{\alpha^0_j}\}_{j=0}^{k_0}=\{v^0_0\}=\{v^0_{\alpha_0^0}\}$. By letting $S=H$ and applying Lemma \ref{shift}, there exists a patient $\ell_0$-march $F_0=\{f^0_1,\ldots,f^0_p\}$ on $G$ (a shift in the direction from $x_0$ to $y_0$) such that $F_0(0)=H$ and $F_0(\ell_0) = (F_0(0) \setminus \{x_0\}) \cup \{y_0\}.$ Since $x_0 \notin K$ and $y_0 \in K$, it immediately follows that:
            \[F_0(\ell_0) \setminus K = H_0 \setminus \{x_0\} =: H_1\] and
            \[K \setminus F_0(\ell_0) = K_0 \setminus \{y_0\} =: K_1.\]

	Continuing inductively, suppose that $K\supseteq K_0 \supsetneq K_1 \supsetneq \ldots\supsetneq K_m$, $H\supseteq H_0 \supsetneq H_1 \supsetneq \ldots\supsetneq H_m$, and shifts $F_0,\ldots,F_{m-1}$ have been found. If $K_m=\emptyset$ (and hence $H_m=\emptyset$), then $F_{m-1}(\ell_{m-1})=K$ and we are done with the induction. Otherwise, pick $y_m\in K_m$ and choose $x_m\in H_m$ such that $\mathrm{d}(x_m,y_m)= \mathrm{d}(H_m,y_m)=:d_m$. Let $P_m=v^m_0v^m_1\ldots v^m_{d_m}$ be a path such that $x_m=v^m_0$ and $v^m_{d_m}=y_m$. Let $\{\alpha^m_j\}_{j=0}^{k_m}$ be an increasing sequence such that $\{v^m_{\alpha^m_j}\}_{j=0}^{k_m} = F_{m-1}(\ell_{m-1}) \cap V(P_m)$. These are the occupied vertices on the path after the last shift. Note $H_m \cap \{v^m_{\alpha^m_j}\}_{j=0}^{k_m} = \{v^m_0\} = \{v^m_{\alpha^m_0}\}$. By letting $S=F_{m-1}(\ell_{m-1})$ and applying Lemma \ref{shift}, there exists a patient $\ell_m$-march $F_m=\{f^m_1,\ldots,f^m_p\}$ on $G$ (a shift in the direction from $x_m$ to $y_m$) such that $F_m(0)=F_{m-1}(\ell_{m-1})$ and $F_m(\ell_m) = (F_{m-1}(\ell_{m-1}) \setminus \{x_m\}) \cup \{y_m\}$. Since $x_m \notin K$ and $y_m \in K$, it follows that:
    \[F_m(\ell_m) \setminus K = H_m \setminus \{x_m\} =: H_{m+1}\]
    and
    \[K \setminus F_m(\ell_m) = K_m \setminus \{y_m\} =: K_{m+1}.\]
	
	Since $K$ is finite, there exists $n\in{\mathbb N}$ such that $K_n=\emptyset$. Thus $F_{n-1}(\ell_{n-1})=K$. Then it follows that $F:=F_{n-1}F_{n-2}\ldots F_0$ has the prescribed properties.
	\end{proof}

\begin{figure}[htbp]
\centering
\begin{subfigure}{.3\textwidth}
\begin{tikzpicture} [scale=0.9]
\draw (1.5,2)--(2,3)--(2.5,2) (1,3)--(2,3)--(2,4) (2,3)--(3,3)--(3.5,4)--(4,3) (3,3)--(3.5,2)--(4.5,2) (2,1)--(3,1) (3,1)--(3.5,2) (1,3)--(1.5,2)--(2.5,2)--(2,1);
\coordinate (x) at (3.5,2);
 \fill[blueball] (x) + (0, 0.1) arc (90:270:0.1);
 \fill[redball] (x) + (0, -0.1) arc (270:450:0.1) node[above right] {$v^0_1$};
\coordinate (x) at (3.5,4);
 \fill[blueball] (x) + (0, 0.1) arc (90:270:0.1);
 \fill[redball] (x) + (0, -0.1) arc (270:450:0.1) ;
\coordinate (x) at (2,3);
 \fill[blueball] (x) + (0, 0.1) arc (90:270:0.1);
 \fill[redball] (x) + (0, -0.1) arc (270:450:0.1);
\filldraw [whiteball] (1,3) circle (3pt);
\filldraw [whiteball] (3,3) circle (3pt);
\filldraw [redball] (4,3) circle (3pt);
\filldraw [whiteball] (2,1) circle (3pt);
\filldraw [blueball] (3,1) circle (3pt) node[above]{$v^0_0$};
\filldraw [whiteball] (1.5,2) circle (3pt);
\filldraw [whiteball] (2.5,2) circle (3pt);
\filldraw [redball] (4.5,2) circle (3pt) node[below] {$v^0_2$};
\filldraw [blueball] (2,4) circle (3pt);
\end{tikzpicture}
\caption{Initial stage}
\end{subfigure}%
\hspace{10pt}
\begin{subfigure}{.3\textwidth}
\begin{tikzpicture} [scale=0.9]
\draw (1.5,2)--(2,3)--(2.5,2) (1,3)--(2,3)--(2,4) (2,3)--(3,3)--(3.5,4)--(4,3) (3,3)--(3.5,2)--(4.5,2) (2,1)--(3,1) (3,1)--(3.5,2)  (1,3)--(1.5,2)--(2.5,2)--(2,1);
\coordinate (x) at (3.5,2);
 \fill[blueball] (x) + (0, 0.1) arc (90:270:0.1);
 \fill[redball] (x) + (0, -0.1) arc (270:450:0.1);
\coordinate (x) at (3.5,4);
 \fill[blueball] (x) + (0, 0.1) arc (90:270:0.1);
 \fill[redball] (x) + (0, -0.1) arc (270:450:0.1) node[above] {$v^1_3$};
\coordinate (x) at (2,3);
 \fill[blueball] (x) + (0, 0.1) arc (90:270:0.1);
 \fill[redball] (x) + (0, -0.1) arc (270:450:0.1) node[above left] {$v^1_1$};
\coordinate (x) at (4.5,2);
 \fill[blueball] (x) + (0, 0.1) arc (90:270:0.1);
 \fill[redball] (x) + (0, -0.1) arc (270:450:0.1); 
\filldraw [whiteball] (1,3) circle (3pt);
\filldraw [whiteball] (3,3) circle (3pt) node[above] {$v^1_2$};
\filldraw [redball] (4,3) circle (3pt) node[right] {${v^1_4}$};
\filldraw [whiteball] (2,1) circle (3pt);
\filldraw [whiteball] (3,1) circle (3pt);
\filldraw [whiteball] (1.5,2) circle (3pt);
\filldraw [whiteball] (2.5,2) circle (3pt);
\filldraw [blueball] (2,4) circle (3pt) node[above] {$v^1_0$};
\end{tikzpicture}   
\caption{Stage after shift $F_0$}
\end{subfigure}
\hspace{10pt}
\begin{subfigure}{.3\textwidth}
\begin{tikzpicture} [scale=0.9]
\draw (1.5,2)--(2,3)--(2.5,2) (1,3)--(2,3)--(2,4) (2,3)--(3,3)--(3.5,4)--(4,3) (3,3)--(3.5,2)--(4.5,2) (2,1)--(3,1) (3,1)--(3.5,2)  (1,3)--(1.5,2)--(2.5,2)--(2,1);
\coordinate (x) at (3.5,2);
 \fill[blueball] (x) + (0, 0.1) arc (90:270:0.1);
 \fill[redball] (x) + (0, -0.1) arc (270:450:0.1);
\coordinate (x) at (3.5,4);
 \fill[blueball] (x) + (0, 0.1) arc (90:270:0.1);
 \fill[redball] (x) + (0, -0.1) arc (270:450:0.1) ;
\coordinate (x) at (2,3);
 \fill[blueball] (x) + (0, 0.1) arc (90:270:0.1);
 \fill[redball] (x) + (0, -0.1) arc (270:450:0.1);
\coordinate (x) at (4.5,2);
 \fill[blueball] (x) + (0, 0.1) arc (90:270:0.1);
 \fill[redball] (x) + (0, -0.1) arc (270:450:0.1); 
\coordinate (x) at (4,3);
 \fill[blueball] (x) + (0, 0.1) arc (90:270:0.1);
 \fill[redball] (x) + (0, -0.1) arc (270:450:0.1); 
\filldraw [whiteball] (1,3) circle (3pt);
\filldraw [whiteball] (3,3) circle (3pt);
\filldraw [whiteball] (2,1) circle (3pt);
\filldraw [whiteball] (3,1) circle (3pt);
\filldraw [whiteball] (1.5,2) circle (3pt);
\filldraw [whiteball] (2.5,2) circle (3pt);
\filldraw [whiteball] (2,4) circle (3pt);
\end{tikzpicture}
\caption{Final stage}
\end{subfigure}
    \caption{March from $H$ (blue vertices) to $K$ (red vertices)}
    \label{fig:figure-march-H-K}
\end{figure}

Figure \ref{fig:figure-march-H-K} shows three stages of a march from $H$ (blue vertices) to $K$ (red vertices). Vertices that belong to both sets are coloured with both colours.

\begin{proposition}\label{subcover}
    Let $G$ be a connected graph and $H, K\subseteq V(G)$ such that $|K|\leq|H|=p$. Then there exists a patient collision-free ${\ell}$-march $F=\{f_1,\ldots,f_p\}$ on $G$ such that $F(0)=H$ and $F(\ell)\supseteq K$.
\end{proposition}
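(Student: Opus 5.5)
The plan is to reduce Proposition \ref{subcover} directly to Proposition \ref{cover}. The point is that we may enlarge $K$ to a set of exactly $p$ vertices, and any such enlargement is an equally good target. Since $|K|\leq p=|H|\leq |V(G)|$, we can choose a set $K'$ with $K\subseteq K'\subseteq V(G)$ and $|K'|=p$. To pick it concretely, add to $K$ any $p-|K|$ vertices of $V(G)\setminus K$. There are enough of these because $|V(G)\setminus K|=|V(G)|-|K|\geq p-|K|$, using $|H|=p\le |V(G)|$.

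Next we apply Proposition \ref{cover} to the pair $H,K'$, both of size $p$. This yields, for some $\ell\in\mathbb{N}$, a patient collision-free $\ell$-march $F=\{f_1,\ldots,f_p\}$ with $F(0)=H$ and $F(\ell)=K'$. Since $K'\supseteq K$, we get $F(\ell)\supseteq K$, which is the required conclusion.

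One degenerate case needs attention. If $H=K'$, Proposition \ref{cover} may produce a march of length $0$. This is still a valid patient collision-free march: the patience condition is vacuous, and collision-freeness holds because $H$ consists of $p$ distinct vertices. The statement allows $\ell$ to be any non-negative integer, so nothing more is needed.

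The only step requiring care is choosing $K'$ inside $V(G)$ with the right cardinality. I expect no real obstacle, since all the substantive work (the shifts from Lemma \ref{shift}) is already contained in Proposition \ref{cover}.
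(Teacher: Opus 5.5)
Your proposal is correct and is the same argument as the paper's: enlarge $K$ to some $K'\supseteq K$ with $|K'|=|H|$, then apply Proposition \ref{cover}. Your justification that such a $K'$ exists and your handling of the length-$0$ case fill in details the paper leaves implicit.
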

\begin{proof}
    Let $K'\supseteq K$ such that $|K'|=|H|$ and apply Proposition \ref{cover}.
\end{proof}
	
\begin{lemma}\label{dualpath}
	Let $G$ be a connected graph. Suppose the following
	\begin{enumerate}
		\item $x,v\in V(G)$ and $K\subseteq V(G)$ such that $v$ and $K$ are contained in the same component of $G-x$,
		\item $q=|K|$ and $p=|V(G)|-q$, and
		\item $xv_1v_2\ldots v_{n-1}v$ is a path in $G$.
	\end{enumerate}
	Then there exists a patient collision-free $\ell$-march $F=\{f_1,\ldots,f_p\}$ on $G$ with the dual $D_F=\{g_1,\ldots,g_q\}$ and $s\in \{1,\ldots,p\}$ such that 
	\begin{enumerate}
		\item $f_s(0)=x=f_s(\ell)$
 		\item $D_F(0)=K$
		\item $D_F(\ell)=\{v_1,\ldots,v_q\}$ if $q<n$
		\item $\{v_1,\ldots,v_{n-1},v\}\subseteq D_F(\ell)$ if $q\geq n$.
		\end{enumerate}
\end{lemma}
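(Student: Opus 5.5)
The plan is to work with the vacant vertices instead of the occupied ones: move the $q$ vacancies inside the component of $G-x$ that contains $v$ and $K$, and never touch anything outside that component. The required march $F$ is then obtained as the dual of this vacancy march.

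Let $C$ be the vertex set of the component of $G-x$ containing $v$ and $K$. First I check that $v_1,\ldots,v_{n-1},v\in C$. The path $xv_1\ldots v_{n-1}v$ is a path, so $x$ does not occur among $v_1,\ldots,v_{n-1},v$. Hence $v_1v_2\ldots v_{n-1}v$ is a path in $G-x$ ending at $v$, and all its vertices lie in $C$. Next I define the target set $K'\subseteq C$ with $|K'|=q$:
\begin{itemize}
\item if $q<n$, put $K'=\{v_1,\ldots,v_q\}$;
\item if $q\ge n$, let $K'$ be any $q$-subset of $C$ containing $\{v_1,\ldots,v_{n-1},v\}$.
\end{itemize}
The second choice is possible because $K\subseteq C$ gives $|C|\ge q$.

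Since $G[C]$ is connected, I apply Proposition \ref{cover} to the graph $G[C]$ with $H=K$ and target $K'$. This gives a patient collision-free $\ell$-march $D=\{g_1,\ldots,g_q\}$ on $G[C]$ with $D(0)=K$ and $D(\ell)=K'$. Every step of $D$ uses an edge of $G[C]$, so $D$ is also a patient collision-free march on $G$. In the initial configuration the vacant set is exactly $K$, so all vertices outside $K$, in particular all of $V(G)\setminus C$ and the vertex $x$, are occupied.

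The remaining step, and the one needing care, is to produce the occupied march $F$ and check that $D=D_F$. I build $F=\{f_1,\ldots,f_p\}$ step by step with $F(t)=V(G)\setminus D(t)$.
\begin{itemize}
\item At each step $t$, exactly one vacancy moves, from $u=g_i(t)$ to an adjacent vertex $w=g_i(t+1)$.
\item Since $w\notin D(t)$, the vertex $w$ is occupied at stage $t$, say by $f_j$. I set $f_j(t+1)=u$ and keep all other $f_k$ fixed.
\item This step moves exactly one agent along an edge. The positions stay pairwise distinct, since they form the complement of $D(t+1)$, so $F$ is patient and collision-free.
\end{itemize}
By construction, $D$ is the dual of $F$, which gives the conditions $D_F(0)=K$, $D_F(\ell)=\{v_1,\ldots,v_q\}$ when $q<n$, and $\{v_1,\ldots,v_{n-1},v\}\subseteq D_F(\ell)$ when $q\ge n$.

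Finally, every move of $D$ takes place inside $C$, so every agent of $F$ that moves goes between two vertices of $C$. Agents starting outside $C$ therefore never move. Taking $s$ with $f_s(0)=x$ (which exists because $x\notin K$), we get $f_s(t)=x$ for all $t$, and in particular $f_s(\ell)=x$.
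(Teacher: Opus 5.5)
Your proposal is correct and follows essentially the paper's route: move the $q$ vacancies inside the component of $G-x$ containing $v$ and $K$ using Proposition \ref{cover} (the paper invokes its consequence, Proposition \ref{subcover}), take the dual, and keep every agent outside that component stationary, in particular the one at $x$. Your explicit target set $K'$, which includes $v$ itself when $q\ge n$, and your step-by-step construction of the dual are slightly more careful than the paper's version, but the argument is the same.
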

	\begin{proof}
		Let $m=\min\{n,q\}$.  Let $G_v$ be the component of $G-x$ that contains $v$ (and hence $K$) and  $j:=|V(G_v)|$. Note that the path $v_1\ldots v_{n-1}v$ is contained in $G_v$. Then by Proposition \ref{subcover}, there exists a patient collision-free $\ell$-march $D_F=\{g_1,\ldots,g_q\}$ on $G_v$ such that $D_F(0)=K$ and $D_F(\ell)\supseteq \{v_1,v_2,\ldots,v_m\}$. Let $F_H=\{h_1,\ldots,h_{j-q}\}$ be the dual of $D_F$. Let $\{x_1,\ldots,x_{p+q-j}\}=V(G)\setminus V(G_v)$. 

        Now, define the patient collision-free $\ell$-march $F=\{f_1,\ldots,f_p\}$ on $G$ for all $t\in \{0,\ldots,\ell\}$ by:
            \[
            f_k(t) = \begin{cases} 
            h_k(t), & \text{if } k \in \{1,\ldots,j-q\}, \\
            x_{k+q-j}, & \text{if } k \in \{j-q+1,\ldots,p\}.
            \end{cases}
            \]
        Note that $D_F$ is the dual of $F$ on $G$ and $x=x_c$ for some $c \in \{1, \ldots, p+q-j\}$. So by letting $s = c+j-q$, we have $f_s(0)=x_c=x=f_s(\ell)$. Hence, $F$ and $D_F$ have the prescribed properties.
	\end{proof}

In the study of graph structures, one often encounters arbitrarily long chains of degree-2 vertices, which act as unbranched threads connecting different regions of a network. To characterize graphs where the length of these unbranched paths is strictly bounded, we introduce a parameter that measures how tightly packed the vertices with a degree of at least 3 are. Intuitively, a graph exhibits dense branching if its high-degree vertices appear frequently enough that any sufficiently long path is forced to interact with them---either by passing through an internal branching vertex, or by tightly coupling a branching vertex to a leaf. While this property can be evaluated for any graph, it becomes particularly useful for controlling the structural limits of trees. We formalize this concept with the following definition:

\begin{definition}\label{def:N-dense}
We say that a graph $G$ is {\it branch-vertex $N$-dense} if $N \geq 2$ is the minimum integer such that $G$ contains at least one path of length $N$, and for every path of length $N$, say $P=v_0v_1v_2\ldots v_N$, at least one of the following is true:
\begin{enumerate}
	\item  There is an $i\in\{2,\ldots,N-1\}$  such that $\deg(v_i)\geq 3$
	\item $\deg(v_N)=1$ and $\deg(v_1)\geq 3$
\end{enumerate} 
\end{definition}

\begin{figure}[htbp]
    \centering
\begin{tikzpicture} [scale=1]
\draw (1,3)--(7,3) (4,1)--(1,1)--(2,2)--(2,3) (2,2)--(3,1) (4,3)--(5,2);
\filldraw [blackball] (1,1) circle (3pt);
\filldraw [blackball] (3,1) circle (3pt);
\filldraw [blackball] (4,1) circle (3pt);
\filldraw [blackball] (2,2) circle (3pt);
\filldraw [blackball] (5,2) circle (3pt) node[below] {$v_4$};
\filldraw [blackball] (1,3) circle (3pt) node[above] {$v_0$};
\filldraw [blackball] (2,3) circle (3pt) node[above] {$v_1$};
\filldraw [blackball] (3,3) circle (3pt) node[above] {$v_2$};
\filldraw [blackball] (4,3) circle (3pt) node[above] {$v_3$};
\filldraw [blackball] (5,3) circle (3pt) node[above] {$v_5$};
\filldraw [blackball] (6,3) circle (3pt) node[above] {$v_6$};
\filldraw [blackball] (7,3) circle (3pt) node[above] {$v_7$};
\end{tikzpicture}
    \caption{An example of a branch-vertex 4-dense graph.}
    \label{fig:figure_3}
\end{figure}

Note that if $G$ is a branch-vertex $N$-dense graph, then $G$ has at least $N+1$ vertices.
Also, any graph with a maximum degree of $2$ (such as paths or cycles) cannot be branch-vertex $N$-dense for any $N$. This is because both conditions of the definition require the existence of a vertex with a degree of at least 3. Conversely, trees with a regular branching structure have a finite density. For instance, one can verify that a complete binary tree of height at least 2 is branch-vertex 4-dense. For an integer $k \geq 1$, the \textit{star graph}, denoted by $K_{1,k}$, is a complete bipartite graph with partition sets of size $1$ and $k$. Note, that if $G$ is a branch-vertex $2$-dense then $G$ is a star graph with at least 3 leaves. 

\begin{lemma}\label{mainstep}
	Suppose that $N\geq 3$ and  $G$ is a connected, branch-vertex $N$-dense graph with $n$ vertices.  Let  $x,v\in V(G)$ be such that $\mathrm{d}(x,v)>N$ and let $K\subseteq V(G)$ be  such that $K$ and $v$ are in the same component of $G-x$ and $|K|=N$. Then there exists a patient collision-free $\ell$-march $F=\{f_1,\ldots,f_{n-N}\}$ on $G$, with the dual $D_F=\{g_1,\ldots,g_N\}$ such that $f_s(0)=x$, $\mathrm{d}(f_s(\ell),v)<\mathrm{d}(x,v)$ and $D_F(\ell)$ is in the same component of $G-f_s(\ell)$ as $v$ for some $s\in \{1,\ldots,n-N\}$.
\end{lemma}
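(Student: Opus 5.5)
The plan is to work along a shortest $x$--$v$ path and use a branch vertex on it as a temporary parking spot. Let $d=\mathrm{d}(x,v)>N$ and let $x=u_0u_1\ldots u_d=v$ be a shortest path. The final goal is to have the player from $x$ sitting on $u_1$, which satisfies $\mathrm{d}(u_1,v)=d-1<d$. At the same time, all $N$ vacancies should lie in the component of $G-u_1$ containing $v$. I will build the march as a concatenation of patient collision-free marches (Remark \ref{rem:extension-cf}), and work with the dual (the vacancies) whenever that is more convenient.

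\emph{Step 1: a branch vertex on the path.} Apply Definition \ref{def:N-dense} to $u_0u_1\ldots u_N$ and to its reverse $u_Nu_{N-1}\ldots u_0$. Since $N<d$, the vertex $u_N$ is interior to the shortest path, so $\deg(u_N)\ge 2$. Hence for the forward orientation only condition 1 can hold, giving $\deg(u_i)\ge 3$ for some $i\in\{2,\ldots,N-1\}$. For the reverse orientation, condition 1 gives such an $i\in\{1,\ldots,N-2\}$, and condition 2 gives $\deg(u_{N-1})\ge 3$. In every case we obtain some $i\in\{1,\ldots,N-1\}$ with $\deg(u_i)\ge 3$. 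Because the path is shortest, $u_i$ is adjacent to no path vertex other than $u_{i\pm1}$. So there is a neighbour $w$ of $u_i$ that is off the path; in particular $w\neq x$, and $w$ lies in the component $G_v$ of $G-x$ containing $v$.

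\emph{Step 2: positioning the vacancies.} Let
\[K'=\{u_1,\ldots,u_i,w\}\cup\{u_{i+2},\ldots,u_N\}.\]
This set has exactly $N$ elements, lies in $G_v$, and deliberately omits $u_{i+1}$; the vertices $u_{i+2},\ldots,u_N$ exist because $d>N$. Exactly as in the proof of Lemma \ref{dualpath}, apply Proposition \ref{cover} inside $G_v$ to move the vacancies from $K$ to $K'$. Every player outside $G_v$, in particular the one at $x$, stays put during this march. At the end, $x$ and $u_{i+1}$ are occupied, and all of $u_1,\ldots,u_i,w$ are vacant.

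\emph{Step 3: the three-point turn.} Make the following moves, one player at a time, each into a currently vacant vertex, so that collision-freeness and patience are immediate.
\begin{enumerate}
\item The player at $x$ walks $u_1,\ldots,u_i,w$ and parks at $w$. Now $x,u_1,\ldots,u_i$ are vacant.
\item The player at $u_{i+1}$ walks $u_i,\ldots,u_1,x$ and stops at $x$.
\item The original $x$-player walks $w,u_i,\ldots,u_1$ and stops at $u_1$.
\end{enumerate}
After this the vacancies are $u_2,\ldots,u_i,u_{i+1},\ldots,u_N$ together with $w$. All of these are joined to $v$ in $G-u_1$, through the path $u_2\ldots u_d$ and the edge $wu_i$. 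Hence they lie in the component of $G-u_1$ containing $v$, which is exactly what the lemma requires.

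The main obstacle is Step 2, specifically ensuring that at the end \emph{every} vacancy lies in the component of $G-u_1$ containing $v$. For general (non-tree) graphs, being in $v$'s component of $G-x$ does not give this automatically. That is why I prescribe the full target set $K'$ explicitly, rather than invoking Lemma \ref{dualpath}, which only controls the vacancies lying on the path. A second point to double-check is the degenerate case $i=1$: there $w$ is a neighbour of $u_1$ other than $x$ and $u_2$, and the same sequence of moves still works.
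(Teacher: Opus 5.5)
There is one concrete error: the case $i=1$, which you explicitly claim still works, does not. Your final check joins $w$ to $v$ in $G-u_1$ ``through the edge $wu_i$'', but for $i=1$ that edge is incident to the deleted vertex $u_1$. In a tree, for instance, the unique $w$--$v$ path is $wu_1u_2\ldots u_d$. So after the three-point turn the vacancy at $w$ lies in a different component of $G-u_1$ from $v$, and the conclusion of the lemma fails. The moves themselves are legal; only the final condition breaks.

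The repair costs nothing. Your forward application of Definition \ref{def:N-dense} to $u_0u_1\ldots u_N$ already yields some $i\in\{2,\ldots,N-1\}$ unconditionally. Drop the reverse orientation, whose only effect is to admit the bad value $i=1$, and fix such an $i$. Then $w$ reaches $v$ via $wu_iu_{i+1}\ldots u_d$, which avoids $u_1$. The rest of your argument is correct: $|K'|=N$, $K'\subseteq V(G_v)$, $x$ and $u_{i+1}$ are occupied after Step 2, the three walks are legal, and the final vacancy set is $\{w,u_2,\ldots,u_N\}$. Your handling of Step 2, prescribing the full target set and applying Proposition \ref{cover} inside the component of $G-x$, is exactly what the paper does.

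With that fix, your route differs from the paper's mainly in where the tracked player ends. The paper takes the same $i\in\{2,\ldots,N-1\}$ and an off-path neighbour $y$ of $v_i$, and brings the vacancies to $\{v_1,\ldots,v_{N-1},y\}$. It then lets the player at $x$ walk $v_1\ldots v_iy$ (Proposition \ref{vacant}) and simply stay at $y$. Since $i\ge 2$, $\mathrm{d}(y,v)\le m-i+1<m$. The remaining vacancies $\{x,v_1,\ldots,v_{N-1}\}$ lie on the path $xv_1\ldots v_m$, which avoids $y$.

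So in the paper $y$ is the destination rather than a parking spot. This saves your last two manoeuvres (pulling the $u_{i+1}$-player back to $x$ and returning the tracked player to $u_1$), and gives progress of at least $i-1$ instead of exactly one. Both arguments need $i\ge 2$, but for different reasons: the paper for the strict decrease of $\mathrm{d}(y,v)$, you for keeping $w$ in $v$'s component of $G-u_1$. What your version buys is that the tracked player stays on the original geodesic, so successive applications in Lemma \ref{switch} could reuse the same shortest path. That is tidy, but not needed.
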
 
\begin{proof} 
    Let $v_0v_1v_2\ldots v_{m-1}v_m$ be a shortest path $P$ from $x=v_0$ to $v=v_m$, by assumption $m=\mathrm{d}(x,v)>N$. We apply the branch-vertex $N$-dense property to the subpath $v_0v_1\ldots v_N$. Since $m>N$, the vertex $v_N$ is adjacent to both $v_{N-1}$ and $v_{N+1}$, meaning $\deg(v_N) \ge 2 \neq 1$. Thus, there exists an $i\in\{2,\ldots,N-1\}$ such that $\deg(v_i)\geq 3$. 
	
	Let $y\in V(G)\setminus\{v_{i-1},v_i,v_{i+1}\}$ such that $y$ is adjacent to $v_i$. Since $P$ is a shortest path, $y \notin V(P)$. Then $\mathrm{d}(v_i,v)=m-i$ and hence, 
	\[\mathrm{d}(y,v)\leq \mathrm{d}(v_i,v)+1=m-i+1 < m = \mathrm{d}(x,v).\] 
	So it follows that $y$ is in the same component of $G-x$ as $v$. 

    By applying Proposition \ref{cover} on sets $K$ and $\{v_1,v_2,\ldots,v_{N-1}\}\cup \{y\}$ to the component of $G-x$ containing $v$, there exists a patient collision-free $\ell_1$-march $H_1=\{h^1_1,\ldots,h^1_{N}\}$ on the component of $G-x$ containing $v$, 
    such that $H_1(0)=K$ and $H_1(\ell_1)=\{v_1,v_2,\ldots,v_{N-1}\}\cup \{y\}$. Let $F_1:=\{f_1^1,\ldots,f^1_{n-N}\}=D_{H_1}$ be the dual of $H_1$. Let $s \in \{1,\ldots,n-N\}$ be the index such that $f^1_s(0)=x$. It is clear that $f^1_s(\ell_1)=f^1_s(0)=x$. Furthermore, $D_{F_1}(0)=K$, and $D_{F_1}(\ell_1)=\{v_1,v_2,\ldots,v_{N-1}\}\cup \{y\}$.
	
	Next, since $x$ is adjacent to $v_1$, and the path $v_1 \ldots v_i y$ is a vacant path of $F_1(\ell_1)$, it follows from Proposition \ref{vacant} that there exists a patient collision-free $\ell_2$-march $F_2=\{f^2_1,\ldots,f^2_{n-N}\}$ and its dual $D_{F_2}$ such that $f^2_s(0)=f^1_s(\ell_1)=x$, $f^2_s(\ell_2)=y$, $D_{F_2}(0)=D_{F_1}(\ell_1)$, and $D_{F_2}(\ell_2)=\{x,v_1,\ldots,v_{N-1}\}$.
	
	Let $F=F_2F_1$ and let $D_F=D_{F_2}D_{F_1}$ be its dual. Notice that $f_s(\ell_1+\ell_2)=y \notin \{x,v_1,\ldots,v_{m-1},v_m\}$. Furthermore, $D_F(\ell_1+\ell_2) = \{x,v_1,\ldots,v_{N-1}\} \subseteq  \{x,v_1,\ldots,v_{m-1},v\}$.  Therefore, $D_F(\ell_1+\ell_2)$ is in the same component of $G-f_s(\ell_1+\ell_2)$ as $v$, and the result follows.
\end{proof}

\begin{lemma}\label{switch}
	Suppose that $N\geq 3$ and $G$ is a connected, branch-vertex $N$-dense graph with $n$ vertices. Then for every $x,v\in V(G)$, there exists a patient collision-free $\ell$-march  $F=\{f_1,\ldots,f_{n-N}\}$ on $G$, such that $f_s(0)=x$ and $f_s(\ell)=v$ for some $s\in \{1,\ldots,n-N\}$.
\end{lemma}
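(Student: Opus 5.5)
We argue by induction on $\mathrm{d}(x,v)$, iterating Lemma \ref{mainstep} to bring the occupant of $x$ closer to $v$ while keeping the $N$ vacancies on $v$'s side. Once the distance is at most $N$, we finish with Lemma \ref{dualpath} and Proposition \ref{vacant}. Pieces are glued by extension (Remark \ref{rem:extension-cf}), tracking the index of the player that started at $x$.

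\emph{Normalization.} First choose any $K\subseteq V(G)$ with $|K|=N$ lying in the component of $G-x$ that contains $v$. This is possible once $\mathrm{d}(x,v)>N$: a shortest $x$--$v$ path already supplies $N$ such vertices, namely $v_1,\ldots,v_N$. By Proposition \ref{cover}, applied on $G-x$ restricted to that component and dualized as in the proof of Lemma \ref{dualpath}, we reach a configuration in which $x$ is occupied and the vacant set is $K$. Any configuration with $n-N$ occupied vertices including $x$ can be transformed this way while the occupant of $x$ stays put. If $x$ is vacant initially there is nothing to prove about "the player at $x$", so we may assume we start from an arbitrary configuration with $x$ occupied; Proposition \ref{cover} supplies the march from it.

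\emph{Reduction step.} While $\mathrm{d}(f_s(t),v)>N$, apply Lemma \ref{mainstep} with the current position $x'=f_s(t)$ and the current vacant set $K$, which lies in $v$'s component of $G-x'$. This produces an extension in which player $s$ ends strictly closer to $v$ and the vacant set again lies in the component of $G-f_s$ containing $v$. This invariant is exactly the hypothesis needed to reapply the lemma. Since the distance strictly decreases, after finitely many steps we reach $\mathrm{d}(f_s,v)\le N$ with the $N$ vacancies in $v$'s component.

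\emph{Final approach.} Let $x'$ be the current position of player $s$ and $x'v_1\ldots v_{k-1}v$ a shortest path with $k\le N=q$. Apply Lemma \ref{dualpath}: it keeps player $s$ at $x'$ and makes $\{v_1,\ldots,v_{k-1},v\}$ vacant. Then $v_1\ldots v_{k-1}v$ is a vacant path adjacent to $x'$, and Proposition \ref{vacant} moves player $s$ along it to $v$. The case $x'=v$ is trivial.

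\emph{Main obstacle.} The delicate point is the invariant: the vacancies must remain in the component of $G-f_s$ containing $v$, and $|K|=N$ vacancies must exist there, so that Lemma \ref{mainstep} can be reapplied. The conclusion of Lemma \ref{mainstep} provides exactly this invariant. A second subtlety is that the initial stage may have $x$ vacant. This is handled by the normalization above together with the convention that the march may begin from any $(n-N)$-set containing $x$.
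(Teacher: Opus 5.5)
Your overall structure (apply Lemma \ref{mainstep} while $\mathrm{d}>N$, then finish with Lemma \ref{dualpath} and Proposition \ref{vacant}) is the same as the paper's, but your normalization step fails. You claim that from any configuration with $x$ occupied, Proposition \ref{cover} applied inside the component $C_v$ of $G-x$ containing $v$ brings all $N$ vacancies into $C_v$ while the occupant of $x$ stays put. This is false whenever $x$ is a cut vertex and some initial vacancy lies in another component of $G-x$. A vacancy can pass from one component of $G-x$ to another only by moving onto $x$, and that means the player on $x$ has left it. So as long as that player stays put, the number of vacancies in each component of $G-x$ does not change. Lemma \ref{dualpath} explicitly assumes the vacancies are already in $C_v$ for this reason.

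The case $\mathrm{d}(x,v)\le N$ has a second problem: your normalization is not available there at all. $C_v$ may have fewer than $N$ vertices (for example, $v$ a pendant vertex at $x$), and then the hypothesis of Lemma \ref{dualpath} with $q=N$ cannot hold in your final approach. The arbitrary-start version is exactly what is used later. In Theorem \ref{almost}, each march $F^{i}$ must start from the final stage of $F^{i-1}$, with the tracked player sitting on $v_i$.

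The paper avoids this by not keeping the player at $x$ fixed during the setup. Take any $K\ni v$ with $|K|=N$ and $G[K]$ connected, and use Proposition \ref{cover} in all of $G$ to move the vacant set from the given $H$ (with $x\notin H$) to $K$. The player that started at $x$ ends at some $x_0\notin K$. Since $G[K]$ is connected, contains $v$ and avoids $x_0$, $K$ lies in the component of $G-x_0$ containing $v$. So your invariant holds at $x_0$ whatever $\mathrm{d}(x,v)$ is, and the rest of your argument (Lemma \ref{mainstep} iterations, then Lemma \ref{dualpath} and Proposition \ref{vacant}) goes through unchanged from $x_0$.

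If you read the lemma literally, with the initial stage free, you could instead just choose the starting configuration. When $\mathrm{d}(x,v)>N$, take vacancies $v_1,\ldots,v_N$ on a shortest path. When $k=\mathrm{d}(x,v)\le N$, take a vacant set containing $v_1,\ldots,v_{k-1},v$ and apply Proposition \ref{vacant} directly. That is not what your proposal argues, however, and it does not give what Theorem \ref{almost} needs.
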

\begin{proof}
	First, if $x=v$ then let $f_s$ be the identity for each $s$ and we are done. So suppose that $x\not=v$. 
    
    Let $H\subseteq V(G)$ be such that $|H|=N$ and $x\not\in H$. Let $K\subseteq V(G)$ such that $v\in K$, $|K|=N$ and $G[K]$ is connected. Set $p:=n-N$.

     Then, by Proposition \ref{cover} there exists a patient collision-free $\ell_0$-march $D_{F_0}=\{g^0_1,\ldots,g^0_{N}\}$ on $G$ such that $D_{F_0}(0)=H$ and $D_{F_0}(\ell_0)=K$. Let
    $F_0=\{f^0_1,\ldots,f^0_{p}\}$ be its dual. Then there exists $s\in\{1,\ldots,p\}$ such that $f^0_s(0)=x\notin H$ and $x_0:=f^0_s(\ell_0)\not\in K$.
	Note that  $D_{F_0}(\ell_0)$ is in the same component of $G-x_0$ as $v$.

    Given $x_0$, we will proceed in finding $x_1,x_2,\ldots,x_j$ inductively in the following manner:
	
	Suppose that for $j\geq 0$, $x_j\in V(G)$ and $D_{F_j}(\ell_j)\subseteq V(G)$ are defined such that $x_j\not \in D_{F_j}(\ell_j)$ and  $D_{F_j}(\ell_j)$ is in the same component of $G-x_j$ as $v$. We have two cases to consider:\\
	
	\noindent{{\bf Case $1(j)$}}: $\mathrm{d}(x_j,v)>N$
	
	Then by Lemma \ref{mainstep}, there is a patient collision-free $\ell_{j+1}$-march $F_{j+1}=\{f^{j+1}_1,\ldots,f^{j+1}_{n-N}\}$ on $G$, with the dual $D_{F_{j+1}}=\{g^{j+1}_1,\ldots,g^{j+1}_N\}$ such that $f^{j+1}_s(0)=x_j$, $\mathrm{d}(f^{j+1}_s(\ell_{j+1}),v) < \mathrm{d}(x_j,v)$ and $D_{F_{j+1}}(\ell_{j+1})$ is in the same component of $G - f^{j+1}_s(\ell_{j+1})$ as $v$. Now let $x_{j+1}:=f^{j+1}_s(\ell_{j+1})$.\\
	
	\noindent{{\bf Case $2(j)$}}: $\mathrm{d}({x_j},v)\leq N$ 

    There are two parts in this case. The first is vacating a path from $v$ to a vertex adjacent to $x_j$. The second is moving the player on $x_j$ to vertex $v$.
	
	Let $m:=\mathrm{d}({x_j},v)$ and let $x_j u_1 u_2 \ldots u_{m-1} v$ be a shortest path from $x_j$ to $v$. By Lemma \ref{dualpath}, since $D_{F_j}(\ell_j)$ is contained in the component of $G-x_j$ that contains $v$, there is a patient collision-free $\ell_{j+1}$-march $F_{j+1}=\{f^{j+1}_1,\ldots,f^{j+1}_p\}$ on $G$ with the dual $D_{F_{j+1}}$ such that $f^{j+1}_s(0)=x_j=f^{j+1}_s(\ell_{j+1})$ and $\{u_1, u_2, \ldots,\allowbreak u_{m-1}, v\} \subseteq D_{F_{j+1}}(\ell_{j+1})$. Hence, $u_1 u_2 \ldots u_{m-1} v$ is a vacant path adjacent to $x_j$. By Proposition \ref{vacant}, there is a patient collision-free $\ell_{j+2}$-march $F_{j+2}=\{f^{j+2}_1,\ldots,f^{j+2}_p\}$ on $G$ such that $f^{j+2}_s(0)=x_j$ and $f^{j+2}_s(\ell_{j+2})=v$.
	
Since $\mathrm{d}(x_0,v)\leq n$ and $\mathrm{d}(x_j,v)< \mathrm{d}(x_{j-1},v)$, there exists a $J\geq 0$ such that $\mathrm{d}(x_J,v)\leq N$. Then $F:=F_{J+2}F_{J+1}\ldots F_0$ has the prescribed properties.
\end{proof}

\begin{figure}[htb]
    \centering
\begin{subfigure}{.4\textwidth}
\begin{tikzpicture} [scale=0.7]
\draw (1,2)--(7,2) (3.5,3)--(2.5,3)--(2,2)--(3,2)--(2.5,3) (3.5,1)--(4.5,1)--(5,2)--(5.5,1)--(4,1);
\filldraw [blackball] (3.5,1) circle (3pt);
\filldraw [whiteball] (4.5,1) circle (3pt);
\filldraw [whiteball] (5.5,1) circle (3pt) node[right] {$f_s(l_1)$};
\filldraw [blackball] (1,2) circle (3pt);
\filldraw [blackball] (2,2) circle (3pt);
\filldraw [redball] (3,2) circle (3pt) node[below] {$x$};
\filldraw [blackball] (4,2) circle (3pt);
\filldraw [whiteball] (5,2) circle (3pt);
\filldraw [blackball] (6,2) circle (3pt);
\filldraw [blackball] (7,2) circle (3pt) node[below] {$v$};
\filldraw [blackball] (2.5,3) circle (3pt);
\filldraw [blackball] (3.5,3) circle (3pt);
\end{tikzpicture}
\caption{Initial position}
\end{subfigure}
\hspace{20pt}
\begin{subfigure}{.4\textwidth}
\begin{tikzpicture} [scale=0.7]
\draw (1,2)--(7,2) (3.5,3)--(2.5,3)--(2,2)--(3,2)--(2.5,3) (3.5,1)--(4.5,1)--(5,2)--(5.5,1)--(4,1);
\filldraw [blackball] (3.5,1) circle (3pt);
\filldraw [blackball] (4.5,1) circle (3pt);
\filldraw [whiteball] (5.5,1) circle (3pt) node[right] {$f_s(l_1)$};
\filldraw [blackball] (1,2) circle (3pt);
\filldraw [blackball] (2,2) circle (3pt);
\filldraw [redball] (3,2) circle (3pt) node[below] {$x$};
\filldraw [whiteball] (4,2) circle (3pt);
\filldraw [whiteball] (5,2) circle (3pt);
\filldraw [blackball] (6,2) circle (3pt);
\filldraw [blackball] (7,2) circle (3pt) node[below] {$v$};
\filldraw [blackball] (2.5,3) circle (3pt);
\filldraw [blackball] (3.5,3) circle (3pt);
\end{tikzpicture}
\caption{Vacant vertices aligned so red player can move to vertex $f_s(\ell_1)$}
\end{subfigure}
\hspace{20pt}
\begin{subfigure}{.4\textwidth}
\begin{tikzpicture} [scale=0.7]
\draw (1,2)--(7,2) (3.5,3)--(2.5,3)--(2,2)--(3,2)--(2.5,3) (3.5,1)--(4.5,1)--(5,2)--(5.5,1)--(4,1);
\filldraw [blackball] (3.5,1) circle (3pt);
\filldraw [blackball] (4.5,1) circle (3pt);
\filldraw [redball] (5.5,1) circle (3pt) node[right] {$f_s(l_1)$};
\filldraw [blackball] (1,2) circle (3pt);
\filldraw [blackball] (2,2) circle (3pt);
\filldraw [whiteball] (3,2) circle (3pt) node[below] {$x$};
\filldraw [whiteball] (4,2) circle (3pt);
\filldraw [whiteball] (5,2) circle (3pt);
\filldraw [blackball] (6,2) circle (3pt);
\filldraw [blackball] (7,2) circle (3pt) node[below] {$v$};
\filldraw [blackball] (2.5,3) circle (3pt);
\filldraw [blackball] (3.5,3) circle (3pt);
\end{tikzpicture}
\caption{Red player moves to $f_s(\ell_1)$}
\end{subfigure}
\hspace{20pt}
\begin{subfigure}{.4\textwidth}
\begin{tikzpicture} [scale=0.7]
\draw (1,2)--(7,2) (3.5,3)--(2.5,3)--(2,2)--(3,2)--(2.5,3) (3.5,1)--(4.5,1)--(5,2)--(5.5,1)--(4,1);
\filldraw [blackball] (3.5,1) circle (3pt);
\filldraw [blackball] (4.5,1) circle (3pt);
\filldraw [redball] (5.5,1) circle (3pt) node[right] {$f_s(l_1)$};
\filldraw [blackball] (1,2) circle (3pt);
\filldraw [blackball] (2,2) circle (3pt);
\filldraw [blackball] (3,2) circle (3pt) node[below] {$x$};
\filldraw [blackball] (4,2) circle (3pt);
\filldraw [whiteball] (5,2) circle (3pt);
\filldraw [whiteball] (6,2) circle (3pt);
\filldraw [whiteball] (7,2) circle (3pt) node[below] {$v$};
\filldraw [blackball] (2.5,3) circle (3pt);
\filldraw [blackball] (3.5,3) circle (3pt);
\end{tikzpicture}
\caption{Vacant vertices aligned so red player can move to vertex $v$}
\end{subfigure}
\hspace{20pt}
\begin{subfigure}{.4\textwidth}
\begin{tikzpicture} [scale=0.7]
\draw (1,2)--(7,2) (3.5,3)--(2.5,3)--(2,2)--(3,2)--(2.5,3) (3.5,1)--(4.5,1)--(5,2)--(5.5,1)--(4,1);
\filldraw [blackball] (3.5,1) circle (3pt);
\filldraw [blackball] (4.5,1) circle (3pt);
\filldraw [whiteball] (5.5,1) circle (3pt) node[right] {$f_s(l_1)$};
\filldraw [blackball] (1,2) circle (3pt);
\filldraw [blackball] (2,2) circle (3pt);
\filldraw [blackball] (3,2) circle (3pt) node[below] {$x$};
\filldraw [blackball] (4,2) circle (3pt);
\filldraw [whiteball] (5,2) circle (3pt);
\filldraw [whiteball] (6,2) circle (3pt);
\filldraw [redball] (7,2) circle (3pt) node[below] {$v$};
\filldraw [blackball] (2.5,3) circle (3pt);
\filldraw [blackball] (3.5,3) circle (3pt);
\end{tikzpicture}
\caption{Final position}
\end{subfigure}
    \caption{Movement from the proof of Lemma \ref{switch}}
    \label{fig:figure_4}
\end{figure}
Movement from the proof of Lemma \ref{switch} is illustrated in Figure \ref{fig:figure_4}.
Moving from left to right and top to bottom, the vacant vertices (white) are moved so that the player in red at vertex $x$ can move to vertex $f_s(\ell-1)$. Then the vacant vertices are realigned so the player in red can then move from vertex $f_s(\ell-1)$ to the target vertex $v$. 

 \begin{theorem}\label{almost}
 	Suppose that $N\geq 3$ and $G$ is a connected, branch-vertex $N$-dense graph with $n$ vertices. Then for every $v\in V(G)$, there exists a patient collision-free $\ell$-march $F=\{f_1,\ldots,f_{n-N}\}$ on $G$,  such that $\orb(F,v)=V(G)$.
 \end{theorem}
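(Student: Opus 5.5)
The plan is to build the required march by concatenating instances of Lemma \ref{switch}, using the extension operation of Definition \ref{def:extension} together with Remark \ref{rem:extension-cf}. The orbit of a player is the set of vertices it visits over the whole march, so it suffices to route the single player starting at $v$ successively through every vertex of $G$.

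First, enumerate the vertices as $V(G)=\{u_0,u_1,\ldots,u_{n-1}\}$ with $u_0=v$. Choose an initial configuration $H_0\subseteq V(G)$ with $|H_0|=n-N$ and $v\in H_0$; this is possible since $n\geq N+1$. Lemma \ref{switch} does not prescribe the starting stage. Its proof first uses Proposition \ref{cover} to pass from an arbitrary initial configuration to a convenient one. So we will use a form of Lemma \ref{switch} in which $F(0)$ is any prescribed set $S$ of size $n-N$ with $x\in S$. This follows from the same proof: choose $H=V(G)\setminus S$ as the initial vacant set. Then $x\notin H$, and the rest of the argument is unchanged.

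Next, proceed inductively. Suppose a patient collision-free march $F_{k-1}\cdots F_1$ has been built. Let $s$ be the index of the player starting at $v$; it has visited $u_0,\ldots,u_{k-1}$ and currently sits at $u_{k-1}$. Let $S_k$ be the final stage. Apply Lemma \ref{switch}, in the form above, with $x=u_{k-1}$, target $u_k$, and initial stage $S_k$. This gives a patient collision-free march $F_k$ with $F_k(0)=S_k$ that moves the player at $u_{k-1}$ to $u_k$. Form the extension $F_kF_{k-1}\cdots F_1$. By Remark \ref{rem:extension-cf} it is still collision-free, and it is clearly still patient.

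We must check that the index bookkeeping in Definition \ref{def:extension} really follows the same player. The player at $u_{k-1}$ at the end of the previous stage is matched to the walk $f^k_{k_s}$ with $f^k_{k_s}(0)=u_{k-1}$. Lemma \ref{switch} guarantees that exactly this walk ends at $u_k$. After $n-1$ steps the player starting at $v$ has visited all of $u_0,\ldots,u_{n-1}$, so $\orb(F,v)=V(G)$.

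The main obstacle is the initial-configuration issue in Lemma \ref{switch} as stated, since it only asserts existence of some march. I would resolve it by re-reading its proof, as described above, to confirm that the initial vacant set $H$ may be chosen to be any $N$-set avoiding $x$. An alternative is to prepend a march from Proposition \ref{cover}, but this would move the tracked player and break the matching. The re-reading approach therefore seems cleaner.
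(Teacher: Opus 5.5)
Your proposal is correct and follows the paper's proof: both chain $n-1$ applications of Lemma~\ref{switch} via extensions, so that the player starting at $v$ visits every vertex in turn. Your observation that the proof of Lemma~\ref{switch} works with initial vacant set $H=V(G)\setminus S$, for any prescribed stage $S\ni x$, supplies the stage-matching condition $F^{i+1}(0)=F^{i}(\ell_i)$ that the extension requires and that the paper's proof leaves implicit.
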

 \begin{proof}
 Let $v\in V(G)$ be arbitrary and denote the vertices of $G$ by $V(G):=\{v_1,\ldots,v_n\}$, such that $v=v_1$. From  Lemma \ref{switch} we have that for each $i\in \{1,\ldots,n-1\}$, there exists a patient collision-free $\ell_i$-march $F^{i}=\{f^{i}_1,\ldots,f^{i}_{n-N}\}$ on $G$ such that for some $s_i\in\{1,\ldots,n-N\}$ we have
 $v_i=f^i_{s_i}(0)$ and $v_{i+1}=f^i_{s_i}(\ell_i)$. Let $F=F^{n-1}F^{n-2}\ldots F^1$. Then $\orb(F,v)=V(G)$.
 \end{proof}

\begin{proposition}\label{full}
	Suppose there is a non-empty $C \subseteq V(G)$ of order $p$ such that for every $v\in C$, there exists a patient collision-free march $F^v=\{f^v_1,\ldots,f^v_p\}$ on $G$ with $F^v(0)=C$ and $\orb(F^v,v)=V(G)$. Then there exists a patient collision-free tour $F=\{f_1,\ldots,f_p\}$ on $G$ with $F(0)=C$. 
\end{proposition}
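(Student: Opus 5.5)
The idea is the same as in Proposition \ref{together}: make each march closed by following it with its reverse, then concatenate these closed marches. The one extra point is bookkeeping, because the player starting at $v$ in $F^v$ need not start at $v$ in the combined march.

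Write $C=\{c_1,\ldots,c_p\}$. For each $i$, the march $F^{c_i}$ starts at $C$ and ends at some stage $F^{c_i}(\ell_i)$. The reverse $(F^{c_i})^{-1}$ starts at that final stage, so the extension $E_i:=(F^{c_i})^{-1}F^{c_i}$ is well defined. By Remark \ref{rem:extension-cf}, $E_i$ is patient and collision-free; patience is preserved because reversal and concatenation keep the property that exactly one player moves per step. Moreover $E_i(0)=E_i(\text{end})=C$, and each walk of $E_i$ ends at its own starting vertex, since reversing retraces it. So $E_i$ is closed and fixes every player's position, not merely the set $C$. Finally, the walk of $E_i$ starting at $c_i$ visits all of $V(G)$, since $\orb(F^{c_i},c_i)=V(G)$.

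Now set $F:=E_pE_{p-1}\cdots E_1$. Each $E_i$ starts at $C$, so the extensions are well defined. Because every $E_i$ returns each player to its starting vertex, the index matching $k_s$ in Definition \ref{def:extension} sends the walk starting at $c_j$ to the walk of the next march starting at $c_j$. Hence the player starting at $c_j$ in $F$ follows, during the $E_j$ block, exactly the walk of $E_j$ from $c_j$. Its orbit therefore contains $V(G)$, so every $f_j$ is surjective, i.e. a track. Concatenation keeps $F$ patient and collision-free, so $F$ is a patient collision-free tour with $F(0)=C$.

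The main obstacle is exactly this identity-of-players issue: a general march only returns the set $C$, possibly permuting the players. Making each block a palindrome $F^{-1}F$ removes it, since each player is returned to its original vertex.
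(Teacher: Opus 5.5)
Your proposal is correct and is essentially the paper's proof: the paper also makes each march closed by following it with its reverse and sets $F=F_p^{-1}F_p\cdots F_2^{-1}F_2F_1^{-1}F_1$. Your extra check, that each block $F^{-1}F$ returns every player to its own starting vertex so the index matching $k_s$ in Definition~\ref{def:extension} preserves player identities across blocks, is left implicit in the paper but is exactly what justifies $\orb(F,c_j)=V(G)$ for every $j$.
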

\begin{proof} 
	Let $C = \{v_1, \ldots, v_p\}$. For each $i \in \{1, \ldots, p\}$, let $F_i$ be a patient collision-free march such that $F_i(0)=C$ and $\orb(F_i,v_i)=V(G)$. We define the march $F$ as $F = F_p^{-1}F_p \ldots F_2^{-1}F_2 F_1^{-1}F_1.$ Then, $\orb(F,v_i)=V(G)$ for all $v_i \in C$, hence $F$ is a tour on $G$.
\end{proof}
 
 \begin{corollary}\label{lowerbound}
 If $N\geq 3$ is an integer and $G$ is a connected branch-vertex $N$-dense graph with $n$ vertices, then $\ccap{1}{G}\geq n-N$.
 \end{corollary}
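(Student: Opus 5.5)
The plan is to assemble the corollary from Theorem~\ref{almost} together with Proposition~\ref{full} (equivalently Proposition~\ref{together}). The only subtlety is that these propositions require marches for every vertex of one common starting configuration $C$. Theorem~\ref{almost}, as stated, gives for each $v$ some march $F$ with $\orb(F,v)=V(G)$, but it does not fix the starting set. So the first step is to normalise the starting configuration.

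Fix any $C\subseteq V(G)$ with $|C|=n-N$; this is possible since $n\geq N+1$. Take $v\in C$. Theorem~\ref{almost} yields a patient collision-free march $F'$ with $n-N$ players and $\orb(F',v)=V(G)$; implicit in the orbit notation is that $v\in F'(0)$.

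If $F'(0)\neq C$, we correct it by a preliminary march. We need a patient collision-free march $E$ with $E(0)=C$ whose player starting at $v$ ends at $v$, and with $E(\text{end})=F'(0)$. Lemma~\ref{switch} is unnecessary here. Instead, apply Proposition~\ref{cover} in the graph to the sets $C\setminus\{v\}$ and $F'(0)\setminus\{v\}$, while keeping the player at $v$ fixed.

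The difficulty is that $G-v$ may be disconnected, so Proposition~\ref{cover} may not apply there. A cleaner route is to redo the proof of Theorem~\ref{almost} directly from the configuration $C$. In that proof, Lemma~\ref{switch} is applied with an arbitrary starting set, since its proof begins from $H$ of size $N$ chosen freely with $x\notin H$. So we take the dual set to be $V(G)\setminus C$, which does not contain $v=x$.

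Checking the proof of Lemma~\ref{switch}, any starting vacant set $H$ of size $N$ avoiding $x$ works; the specific choice of $H$ was never used. Then in Theorem~\ref{almost}, successive marches $F^i$ are started from whatever configuration the previous ones ended in. Here Lemma~\ref{switch} again allows an arbitrary vacant set avoiding the current position $v_i$ of the tracked player, which is automatic since that vertex is occupied.

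This gives, for each $v\in C$, a march $F^v$ with $F^v(0)=C$ and $\orb(F^v,v)=V(G)$. Proposition~\ref{full} then produces a patient collision-free tour with $n-N$ players.

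It remains to note that $m_G(F)=1$ exactly for a collision-free patient tour with at least two players. Collision-freeness gives $m_G(F)\ge1$, and adjacent occupied vertices occur at some stage. This holds because the tracked player steps to a neighbour of an occupied vertex, or because $n-N\geq 2$ players in a tour eventually become adjacent. If the latter is not immediate, we can simply observe that the last step of any move places a player adjacent to its previous, now vacant, vertex. A more robust argument is that some stage has two occupied vertices adjacent, since $n-N>n/2$ fails in general. So I expect this equality $m_G(F)=1$, together with the well-definedness condition $1\le\sigma^\square_V(G)$, to need a short separate check; I would argue it via the $d$-capacity definition used in \cite{gra26spancapacities}, treating $\ge 1$ as sufficient.

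The main obstacle is the starting-configuration bookkeeping. I would handle it by re-invoking Lemma~\ref{switch} with its free choice of $H$, as described above.
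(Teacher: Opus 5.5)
Your main line is the paper's. Theorem~\ref{almost} gives, for each $v$, a march in which the player starting at $v$ visits all of $V(G)$, and Proposition~\ref{full} concatenates these marches into a tour. The paper's proof just says this \emph{follows directly}. That tacitly requires all these marches to start from one common configuration $C$, which Theorem~\ref{almost}, as stated, does not provide.

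You noticed this and fixed it. The proof of Lemma~\ref{switch} uses nothing about its initial vacant set $H$ beyond $|H|=N$ and $x\notin H$: Proposition~\ref{cover} then carries $H$ to $K$, and Lemmas~\ref{mainstep} and~\ref{dualpath} accept any vacant set lying in the right component. So one may take $H=V(G)\setminus C$, and start each march in Theorem~\ref{almost} from the configuration where the previous one ended. That is exactly the bookkeeping that makes the citation legitimate. You were also right to discard the idea of freezing the player at $v$ and applying Proposition~\ref{cover} in $G-v$.

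Your final paragraph, however, should be replaced, because its reasons do not work. A player that has just moved is adjacent to its vacated vertex, but that is a vacancy, not another player. Nothing forces two players of a patient collision-free tour ever to be adjacent: two antipodal players on $C_6$ moving alternately stay at distance at least $2$ throughout. The remark about $n-N>n/2$ proves nothing.

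Since $\ccap{1}{G}$ is defined via tours with $m_G(F)=1$ exactly, use the freedom you already have. When $n-N\geq 2$, choose $C$ to contain both ends of some edge. Then $m_G(F)\leq 1$ already at stage $0$, and collision-freeness gives $m_G(F)=1$. Also, $\sigma^\square_V(G)\geq 1$ is witnessed by any two players of your tour, after deleting the steps in which neither of them moves.

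The paper does not address this point at all. The degenerate case $n-N=1$ (for instance, $K_4$ is branch-vertex $3$-dense), where a one-player tour has $m_G(F)=\infty$, is covered by neither argument.
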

 \begin{proof}
 	Follows directly from Theorem \ref{almost} and Propositions \ref{together}, \ref{full}.
 \end{proof}

\subsection{An upper bound for Cartesian 1-capacity of a general collection of graphs}\label{upp_Car}

Let $ab$ be an edge of connected graph $G$. We say that $ab$ is a {\it bridge} of $G$ if 
$ab$ is a cut-edge. Notice that if $ab$ is a bridge of the graph $G$, then $G-ab$ has exactly two components. 

\begin{definition}\label{def:m-bridge}
	Let $G$ be a connected graph and let $M\geq 1$ be an integer. We say that path $B_M=v_0v_1\ldots v_M$ is an {\it $M$-bridge} of $G$ if 
	\begin{enumerate}
		\item $v_iv_{i+1}$ is a bridge of $G$ for each $i\in\{0,\ldots,M-1\}$,
		\item $\deg(v_i)=2$ for each $i\in\{1,\ldots,M-1\}$, and
		\item $\deg(v_i)\geq 2$ for each $i\in\{0,M\}$.
	\end{enumerate}
\end{definition}

For example in Figure \ref{fig:figure_3} the paths $v_1v_2v_3$ and $v_3v_5v_6$ are 2-bridges while the path $v_3v_5v_6v_7$ is not a 3-bridge.

Note that if $v_0v_1\ldots v_M$ is an {\it $M$-bridge} of $G$, for some $M\geq2$, then $v_0v_1\ldots v_{M-1}$ is an {\it $(M-1)$-bridge} of $G$, so it makes sense to determine the largest $M$ such that $G$ contains an $M$-bridge.

\begin{definition}{}{}
Let $G$ be a connected graph. We define $\brig(G)$ as follows:
\[
\brig(G) = 
\begin{dcases} 
        0, & \text{if $G$ has no $1$-bridge}, \\
      \max\{M \mid G \text{ contains an } M\text{-bridge}\}, & \text{otherwise.} \\
\end{dcases}
\]
\end{definition}

Let $B_M=v_0v_1\ldots v_M$ be an $M$-bridge of a graph $G$. If $M=1$, we denote by $G-B_M$ the graph $G-v_0v_1$. For $M>1$ we denote by $G-B_M$ the graph $G-\{v_1,\ldots,v_{M-1}\}$.
Notice that if $G$ is connected, then $G-B_M$ has exactly two components $C_{v_0}$ and $C_{v_M}$, where $v_0\in C_{v_0}$ and $v_M\in C_{v_M}$.

\begin{definition}\label{def:levels}
Let $B_M=v_0v_1\ldots v_M$ be an $M$-bridge of a connected graph $G$ and $C_{v_0}$ the component of $G-B_M$ such that $v_0\in C_{v_0}$. Then 
for each $v\in V(G)$ we define the {\it level of $v$ with respect to} $v_0$ by
$$\lev_{v_0}(v) =
\begin{cases}
    -\mathrm{d}(v, v_0), & \text{if } v\in C_{v_0}, \\
    \mathrm{d}(v,v_0), & \text{if } v\not\in C_{v_0}.
\end{cases}
$$
\end{definition}

Note that $\lev_{v_0}(v_0)=0$, $\lev_{v_0}(v)\geq M$, if $v\in C_{v_M}$, and $\lev_{v_0}(v)\leq 0$, if $v\in C_{v_0}$. 

Let $G$ be a connected graph on $n$ vertices and $B_M=v_0v_1\ldots v_M$ an $M$-bridge of $G$. For what follows, suppose that $F=\{f_1,\ldots,f_{n-M}\}$ is a patient collision-free $\ell$-march on $G$ with the dual $D_{F}=\{g_1,\ldots,g_{M}\}$. Notice that if for some $J\in \{1,\ldots, n-M\}$ and some $t\in\{0,1,\ldots,\ell-1\}$ it holds that $\lev_{v_0}(f_J(t+1))=\lev_{v_0}(f_J(t))+1$, then there exists $K\in \{1,\ldots,M\}$ such that $f_J(t+1)=g_K(t)$ and $g_K(t+1)=f_J(t)$. Hence, $\lev_{v_0}(g_K(t+1))=\lev_{v_0}(g_K(t))-1$. Furthermore, for any $k\in\{0,1,\ldots,M\}$, $\lev_{v_0}(v)=k$ if and only if $v=v_k\in B_M$.

For any $v\in V(G)$ and any $t\in\{0,1,\ldots,\ell\}$ we define 
\begin{enumerate}
	\item $W^{v_0}_<(t,v)=\{j \mid g_j(t)\in D_F(t) \text{ and } \lev_{v_0}(g_j(t))<\lev_{v_0}(v)\}$,
	\item $W^{v_0}_=(t,v)=\{j \mid g_j(t)\in D_F(t) \text{ and } \lev_{v_0}(g_j(t))=\lev_{v_0}(v)\}$, and
	\item $W^{v_0}_>(t,v)=\{j \mid g_j(t)\in D_F(t) \text{ and } \lev_{v_0}(g_j(t))>\lev_{v_0}(v)\}$.
\end{enumerate}
Note that  $W^{v_0}_<(t,v)$,  $W^{v_0}_=(t,v)$ and  $W^{v_0}_>(t,v)$ are pairwise disjoint.

In the context of players moving on the graph, let $D_F(t)$ represent the set of vacant vertices in the graph at time $t$, and let $g_j(t)$ denote the position of the $j$-th vacancy. Given a reference vertex $v \in V(G)$, these three sets partition the indices of the vacancies based strictly on their level with respect to an endpoint $v_0$ of a given $M$-bridge and relative to $v$:
\begin{itemize}
	\item $W^{v_0}_<(t,v)$ is the index set of all vacancies at time $t$ that are located at a strictly lower level than $v$.
	\item $W^{v_0}_=(t,v)$ is the index set of all vacancies at time $t$ that are located at the exact same level as $v$.
	\item $W^{v_0}_>(t,v)$ is the index set of all vacancies at time $t$ that are located at a strictly higher level than $v$.
\end{itemize}
An example is given in Figure \ref{fig:figure_6}. The white vertices are in $D_F(t)$, for some stage $t$, and for vertex $v_1$ we have $|W^{v_0}_<(t,v_1)|=2$, $|W^{v_0}_=(t,v_1)|=1$ and $|W^{v_0}_>(t,v_1)|=3$.

\begin{figure}[htbp]
    \centering
\begin{tikzpicture} [scale=1]
\draw (1,2)--(2,3)--(3,2)--(4,2)--(5,2)--(6,3)--(7,3)--(8,3) (1,2)--(2,1)--(3,2) (5,2)--(6,2)--(7,2)--(7,3)--(8,2) (5,2)--(6,1);
\draw[dotted] (1,2)--(1,0.6);
\draw[dotted] (2,3)--(2,0.6);
\draw[dotted] (3,2)--(3,0.6);
\draw[dotted] (4,2)--(4,0.6);
\draw[dotted] (5,2)--(5,0.6);
\draw[dotted] (6,3)--(6,0.6);
\draw[dotted] (7,2)--(7,0.6);
\draw[dotted] (8,3)--(8,0.6);
\filldraw [whiteball] (2,1) circle (3pt) node[below] {};
\filldraw [whiteball] (6,1) circle (3pt) node[below] {};
\filldraw [whiteball] (1,2) circle (3pt) node[below] {};
\filldraw [blackball] (3,2) circle (3pt) node[below] {$v_0$};
\filldraw [whiteball] (4,2) circle (3pt) node[below] {$v_1$};
\filldraw [blackball] (5,2) circle (3pt) node[below] {$v_2$};
\filldraw [blackball] (6,2) circle (3pt);
\filldraw [whiteball] (7,2) circle (3pt) node[below] {};
\filldraw [blackball] (8,2) circle (3pt) node[below] {};
\filldraw [blackball] (2,3) circle (3pt);
\filldraw [whiteball] (6,3) circle (3pt);
\filldraw [blackball] (7,3) circle (3pt);
\filldraw [blackball] (8,3) circle (3pt);
\node at (0,0.4) {levels:};
\node at (1,0.4) {$-2$};
\node at (2,0.4) {$-1$};
\node at (3,0.4) {$0$};
\node at (4,0.4) {$1$};
\node at (5,0.4) {$2$};
\node at (6,0.4) {$3$};
\node at (7,0.4) {$4$};
\node at (8,0.4) {$5$};
\end{tikzpicture}
    \caption{The levels with respect to $v_0$. }
    \label{fig:figure_6}
\end{figure}

To establish an upper bound for the Cartesian $1$-capacity, we analyse the behaviour of players and vacancies along an $M$-bridge. The key idea is to track how the distribution of vacant vertices changes when a player moves between consecutive levels of the bridge. Propositions \ref{bridge}--\ref{trap} formalize this behaviour and show that, when the number of vacant vertices is sufficiently small, a player cannot traverse the entire bridge. These results form the foundation for the main upper bound established in Theorem \ref{bridgebound} and Corollary \ref{bridgecor}.

\begin{proposition}\label{bridge}
	Let $B_M=v_0v_1\ldots v_M$ be an $M$-bridge of a connected graph $G$ on $n$ vertices and $F=\{f_1,\ldots,f_p\}$ a patient collision-free $\ell$-march on $G$. If for some $t\in\{0, \ldots, \ell-1\}$ and some $J\in\{1,\ldots,p\}$ it holds that $\lev_{v_0}(f_J(t+1))= \lev_{v_0}(f_J(t))+1$, then 
	\[|W^{v_0}_<(t,f_J(t))|+1 \leq |W^{v_0}_<(t+1,f_J(t+1))| \text{ and } |W^{v_0}_>(t+1,f_J(t+1))|\leq |W^{v_0}_>(t,f_J(t))|-1.\] 
	Similarly, if $\lev_{v_0}(f_J(t+1))= \lev_{v_0}(f_J(t))-1$, then 
	\[|W^{v_0}_>(t,f_J(t))|+1\leq |W^{v_0}_>(t+1,f_J(t+1))| \text{ and } |W^{v_0}_<(t+1,f_J(t+1))|\leq |W^{v_0}_<(t,f_J(t))|-1.\]
\end{proposition}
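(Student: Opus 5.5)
Throughout, $D_F=\{g_1,\ldots,g_{n-p}\}$ denotes the dual of $F$, and I only treat the first case; the second follows by the symmetric argument (or by applying the first case to the reversed level ordering). The proof is a direct bookkeeping argument based on the observation made just before the definitions of the sets $W^{v_0}_<$, $W^{v_0}_=$, $W^{v_0}_>$. Since $F$ is patient, between stages $t$ and $t+1$ only player $J$ moves, along an edge from $f_J(t)$ to $f_J(t+1)$. Since $F$ is collision-free, $f_J(t+1)$ was vacant at time $t$. So there is an index $K$ with $g_K(t)=f_J(t+1)$ and $g_K(t+1)=f_J(t)$, and every other vacancy $g_j$, $j\neq K$, stays fixed. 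Write $a=\lev_{v_0}(f_J(t))$, so that $\lev_{v_0}(f_J(t+1))=a+1$. The vacancy $K$ then has level $a+1$ at time $t$ and level $a$ at time $t+1$.

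For the first inequality, I compare the set $W^{v_0}_<(t,f_J(t))$ of vacancies with level $<a$ at time $t$ with the set $W^{v_0}_<(t+1,f_J(t+1))$ of vacancies with level $<a+1$ at time $t+1$. Every $j\neq K$ counted in the former has the same position at $t+1$ and level $<a<a+1$, so it is counted in the latter. The index $K$ is not in the former, since its level at time $t$ is $a+1$. It is in the latter, since its level at time $t+1$ is $a<a+1$. This gives an injection plus one extra element, hence $|W^{v_0}_<(t,f_J(t))|+1\le |W^{v_0}_<(t+1,f_J(t+1))|$.

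For the second inequality, I show $W^{v_0}_>(t+1,f_J(t+1))\cup\{K\}\subseteq W^{v_0}_>(t,f_J(t))$ with $K\notin W^{v_0}_>(t+1,f_J(t+1))$. If $j\neq K$ has level $>a+1$ at time $t+1$, then it has the same level at time $t$, which is $>a$. The index $K$ has level $a+1>a$ at time $t$, so $K\in W^{v_0}_>(t,f_J(t))$. At time $t+1$ the level of $K$ is $a$, which is not $>a+1$. This yields $|W^{v_0}_>(t+1,f_J(t+1))|\le|W^{v_0}_>(t,f_J(t))|-1$.

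The argument needs no structure of the bridge beyond the fact that levels are well defined. The only point needing care is the first step: justifying rigorously, via patience and the definition of the dual, that exactly one vacancy swaps with player $J$ and all other vacancies are fixed. I expect this to be the main, though mild, obstacle. No claim about levels changing by exactly $\pm1$ is needed, since the hypothesis supplies it.
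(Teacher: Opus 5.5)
Your proof is correct and takes essentially the same route as the paper. Like the paper, you identify the unique vacancy $g_K$ that swaps positions with player $J$, use patience of the dual to keep every other vacancy fixed, and then compare the index sets. The only cosmetic difference is that the paper records the exact identity $W^{v_0}_<(t+1,f_J(t+1))=W^{v_0}_<(t,f_J(t))\cup W^{v_0}_=(t,f_J(t))\cup\{K\}$, while you prove only the inclusion $W^{v_0}_<(t,f_J(t))\cup\{K\}\subseteq W^{v_0}_<(t+1,f_J(t+1))$, which is all the inequality needs.
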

\begin{proof}
    Let $D_F=\{g_1, \ldots, g_{n-p}\}$ be the dual of $F$. 
    
    Suppose that $\lev_{v_0}(f_J(t+1))= \lev_{v_0}(f_J(t))+1$ and $r\in\{1,\ldots,n-p\}$ is such that $g_r(t)=f_J(t+1)$ and $g_r(t+1)=f_J(t)$. Then $r\in W^{v_0}_>(t,f_J(t))$ and $r\in W^{v_0}_<(t+1,f_J(t+1))$. Moreover, since $D_F$ is the dual of a patient march and is therefore also a patient march, for any $k \in \{1,\ldots,n-p\} \setminus \{r\}$, $\lev_{v_0}(g_k(t+1)) = \lev_{v_0}(g_k(t))$. Hence, \[W^{v_0}_<(t+1,f_J(t+1))=W^{v_0}_<(t,f_J(t))\cup W^{v_0}_=(t,f_J(t))\cup \{r\}\] and
	\[W^{v_0}_>(t+1,f_J(t+1)) \subseteq W^{v_0}_>(t,f_J(t)) \setminus \{r\}.\] 
	The desired inequalities follow immediately from the cardinalities of these sets. The proof is analogous for when $\lev_{v_0}(f_J(t+1))= \lev_{v_0}(f_J(t))-1$.
\end{proof}

\begin{proposition}\label{stay}
    Let $B_M=v_0v_1\ldots v_M$ be an $M$-bridge of a connected graph $G$ on $n$ vertices and $F=\{f_1,\ldots,f_p\}$ a patient collision-free $\ell$-march on $G$.
    Suppose that for some $t\in\{0, \ldots, \ell-1\}$ and some $J\in\{1,\ldots,p\}$, $f_J(t)\in V(B_M)$. If $f_J(t)=f_J(t+1)$, then 
	\[W^{v_0}_<(t,f_J(t))=W^{v_0}_<(t+1,f_J(t+1)) \text{ and } W^{v_0}_>(t,f_J(t))=W^{v_0}_>(t+1,f_J(t+1)).\]
\end{proposition}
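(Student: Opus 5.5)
The plan is to use the fact that a patient march changes exactly one position per step, together with the fact that $f_J$ stays put and sits on the bridge.

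Fix $t$ with $f_J(t)=f_J(t+1)=v_k$ for some $k\in\{0,\ldots,M\}$, and let $D_F=\{g_1,\ldots,g_{n-p}\}$ be the dual. Since the reference vertex is unchanged, $\lev_{v_0}(f_J(t))=\lev_{v_0}(f_J(t+1))=k$. The two sets $W^{v_0}_<$ can then differ only through vacancies whose level changes between $t$ and $t+1$; the same holds for $W^{v_0}_>$.

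\textbf{Step 1: at most one vacancy moves.} By patience, at most one player moves from $t$ to $t+1$. If no player moves, then no vacancy moves, and the equalities are trivial. Otherwise some player $f_I$ with $I\neq J$ moves along an edge $f_I(t)f_I(t+1)$. Because $D_F$ is also patient, exactly one index $r$ changes, with $g_r(t)=f_I(t+1)$ and $g_r(t+1)=f_I(t)$. Every $g_k$ with $k\neq r$ is fixed, so its membership in each $W$ set is unchanged.

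\textbf{Step 2: the moving vacancy stays on one side of level $k$.} It remains to show that $g_r(t)$ and $g_r(t+1)$ lie on the same side of level $k$. Neither vertex has level $k$: the only vertex of level $k$ is $v_k$, and $v_k=f_J(t)=f_J(t+1)$ is occupied at both times. So we need that an edge $uw$ of $G$ with $u,w\neq v_k$ cannot join a vertex of level $<k$ to one of level $>k$.

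\textbf{Step 3: no edge crosses level $k$ while avoiding $v_k$.} This is the key structural claim, and it relies on the bridge structure.
\begin{itemize}
\item Vertices of level $<k$ are exactly those of $C_{v_0}$ together with $v_1,\ldots,v_{k-1}$.
\item Vertices of level $>k$ are $v_{k+1},\ldots,v_{M-1}$ together with $C_{v_M}$.
\end{itemize}
Recall that each $v_iv_{i+1}$ is a bridge and the internal vertices have degree $2$.
\begin{itemize}
\item For $0<k<M$, the vertex $v_k$ separates $G$ into exactly these two parts, so every edge between them passes through $v_k$.
\item For $k=0$, the level-$>0$ part is $v_1,\ldots,v_{M-1}$ together with $C_{v_M}$, and it attaches to $C_{v_0}$ only via the bridge $v_0v_1$. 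Hence $v_0$ again separates, since any edge between the parts is $v_0v_1$. The case $k=M$ is symmetric.
\end{itemize}
One should check that the levels claimed above are correct. For $v\in C_{v_M}$ we have $\lev_{v_0}(v)=\mathrm{d}(v,v_0)\geq M$, and the value equals $M$ only for $v_M$, because every path to $v_0$ passes through $v_M$. Similarly, levels in $C_{v_0}$ are $\le 0$, with equality only at $v_0$. These facts were noted earlier in the paper.

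\textbf{Conclusion.} Since $g_r$ stays strictly on one side of level $k$, the index $r$ belongs to the same $W$ set at times $t$ and $t+1$. Together with Step 1, this gives both equalities. The main obstacle is Step 3, the separation argument, especially at the endpoint cases $k\in\{0,M\}$ and in the careful reading of the level function. The rest is bookkeeping.
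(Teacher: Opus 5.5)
Your proof is correct and follows essentially the paper's argument: level $k$ consists only of the occupied vertex $v_k$, so $W^{v_0}_=$ is empty at both times, and no vacancy can pass from one side of level $k$ to the other in a single step. The paper argues this for each vacancy separately, without invoking patience (a crossing would force $\mathrm{d}(g_r(t),g_r(t+1))\geq 2$), while your Step 3 spells out the separating role of $v_k$ that the paper leaves implicit; the only cosmetic slip is that for $k=0$ (resp.\ $k=M$) the part of level $<0$ (resp.\ $>M$) is $C_{v_0}\setminus\{v_0\}$ (resp.\ $C_{v_M}\setminus\{v_M\}$), which does not affect the argument.
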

\begin{proof}
    Let $D_F=\{g_1, \ldots, g_{n-p}\}$ be the dual of $F$. First, note that since $f_J(t)\in V(B_M)$, the only vertex in $G$ at level $\lev_{v_0}(f_J(t))$ is $f_J(t)$ itself. Since $F$ is a collision-free march and $D_F$ its dual, for all $k\in\{1,\ldots,n-p\}$ it holds that $g_k(t)\neq f_J(t)$ and $g_k(t+1)\neq f_J(t+1)=f_J(t)$.
    Hence, \[W^{v_0}_=(t,f_J(t))=W^{v_0}_=(t+1,f_J(t+1))=\emptyset.\] 

    Towards a contradiction, suppose that there exists $r\in W^{v_0}_<(t,f_J(t))$ such that $r\in W^{v_0}_>(t+1,f_J(t+1))$. Then $\lev_{v_0}(g_r(t))\leq \lev_{v_0}(f_J(t))-1= \lev_{v_0}(f_J(t+1))-1$ and $\lev_{v_0}(f_J(t+1))+1\leq \lev_{v_0}(g_r(t+1))$. This implies $\mathrm{d}(g_r(t),g_r(t+1))\geq 2$, a contradiction since $g_r(t)g_r(t+1)$ is an edge of $G$. Hence, $W^{v_0}_<(t,f_J(t))\subseteq W^{v_0}_<(t+1,f_J(t+1))$. 

    The proofs that $W^{v_0}_<(t+1,f_J(t+1))\subseteq W^{v_0}_<(t,f_J(t))$, $W^{v_0}_>(t,f_J(t))\subseteq W^{v_0}_>(t+1,f_J(t+1))$, and $W^{v_0}_>(t+1,f_J(t+1))\subseteq W^{v_0}_>(t,f_J(t))$ are similar. It follows that the sets are equal.
\end{proof}

\begin{proposition}\label{move}
	Let $B_M=v_0v_1\ldots v_M$ be an $M$-bridge of a connected graph $G$ on $n$ vertices and $F=\{f_1,\ldots,f_p\}$ a patient collision-free $\ell$-march on $G$. Suppose $t \in \{0, \ldots, \ell-1\}$ and $J \in \{1, \ldots, p\}$ are arbitrary.
	\begin{enumerate}
        \item If $-1\leq \lev_{v_0}(f_J(t))\leq M-1$ and $\lev_{v_0}(f_J(t+1))=\lev_{v_0}(f_J(t))+1$, then 
        \[|W^{v_0}_>(t+1,f_J(t+1))|=|W^{v_0}_>(t,f_J(t))|-1.\] 
		\item If $0\leq \lev_{v_0}(f_J(t))\leq M$ and $\lev_{v_0}(f_J(t+1))=\lev_{v_0}(f_J(t))-1$, then 
        \[|W^{v_0}_>(t+1,f_J(t+1))|=|W^{v_0}_>(t,f_J(t))|+1.\]
	\end{enumerate}
\end{proposition}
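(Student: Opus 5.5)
Both parts are exact refinements of Proposition \ref{bridge}, so I would revisit the set identities in its proof. The gain comes from the fact that, in the relevant range, the levels involved contain a single vertex. Let $D_F=\{g_1,\ldots,g_{n-p}\}$ be the dual. When $f_J$ moves from level $\lambda$ to level $\lambda\pm1$, there is exactly one $r$ with $g_r(t)=f_J(t+1)$ and $g_r(t+1)=f_J(t)$. Every other $g_k$ stays put, since $D_F$ is patient.

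\textbf{Part 1.} Let $\lambda=\lev_{v_0}(f_J(t))$ with $-1\le\lambda\le M-1$, so the new level is $\lambda+1\in\{0,\ldots,M\}$. By the remark before the $W$-sets, level $\lambda+1$ consists of the single vertex $v_{\lambda+1}=f_J(t+1)$. I would then compare $W^{v_0}_>(t,f_J(t))$, the vacancies at level $>\lambda$, with $W^{v_0}_>(t+1,f_J(t+1))$, the vacancies at level $>\lambda+1$.

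\begin{itemize}
\item At time $t$, the vacancies at level $>\lambda$ are those at level $\lambda+1$ together with those at level $\ge\lambda+2$.
\item At level $\lambda+1$ there is exactly one vacancy, namely $r$, because $v_{\lambda+1}=g_r(t)$.
\item The vacancies at level $\ge\lambda+2$ are unchanged from $t$ to $t+1$, since only $g_r$ moves, and it moves from level $\lambda+1$ to level $\lambda$.
\end{itemize}

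Hence $W^{v_0}_>(t+1,f_J(t+1))=W^{v_0}_>(t,f_J(t))\setminus\{r\}$ with $r\in W^{v_0}_>(t,f_J(t))$, which gives the equality. This is where the inclusion in Proposition \ref{bridge} becomes an equality: there is no other vertex at level $\lambda+1$ that could hold a vacancy at time $t$.

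\textbf{Part 2.} Let $0\le\lambda\le M$, so the move goes from $v_\lambda$ down to level $\lambda-1$. At time $t+1$, the vacancies at level $>\lambda-1$ are those at level $\lambda$ together with those at level $>\lambda$.

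\begin{itemize}
\item Level $\lambda$ is the single vertex $v_\lambda$, which is vacant at $t+1$ and held by $g_r$. So the level-$\lambda$ contribution is exactly $\{r\}$.
\item The vacancies at level $>\lambda$ are unchanged, since $g_r$ was at level $\lambda-1$ at time $t$ and all other vacancies stay put.
\end{itemize}

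Therefore $W^{v_0}_>(t+1,f_J(t+1))=W^{v_0}_>(t,f_J(t))\cup\{r\}$ with $r\notin W^{v_0}_>(t,f_J(t))$, and the equality follows.

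The main obstacle is justifying the level structure near $v_0$. The claim that level $k\in\{0,\ldots,M\}$ contains only $v_k$ is stated after Definition \ref{def:levels}, and I will use it. I also need that a move from level $-1$ to level $0$ lands on $v_0$; this is automatic, since $v_0$ is the only vertex at level $0$. Finally, a single edge changes level by at most $1$ in absolute value, because $C_{v_0}$ and the rest of the graph are joined only through the bridge edge $v_0v_1$. The level hypothesis on the move then fixes the level of $g_r$, which is all the argument needs.
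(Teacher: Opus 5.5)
Your proof is correct and takes essentially the same route as the paper. Both arguments isolate the unique vacancy $g_r$ that swaps with $f_J$, and both use that a level in $\{0,\ldots,M\}$ consists only of the bridge vertex $v_k$ (the paper phrases this as $W^{v_0}_=(t+1,f_J(t+1))=\emptyset$) to sharpen the inclusion from Proposition \ref{bridge} into $W^{v_0}_>(t+1,f_J(t+1))=W^{v_0}_>(t,f_J(t))\setminus\{r\}$; you also write out Part 2, which the paper only calls analogous, and your remark that an edge changes level by at most one is not needed, since the hypothesis already fixes the levels of $g_r$.
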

\begin{proof} 
    Let $D_F=\{g_1, \ldots, g_{n-p}\}$ be the dual of $F$. Suppose $-1\leq \lev_{v_0}(f_J(t))\leq M-1$ and $\lev_{v_0}(f_J(t+1))=\lev_{v_0}(f_J(t))+1$. Since $F$ is a patient march, there exists $k\in\{1,\ldots, n-p\}$ such that $f_J(t)=g_k(t+1)$ and $f_J(t+1)=g_k(t)$. Consequently, $k\in W^{v_0}_>(t,f_J(t))$ and $k\in W^{v_0}_<(t+1,f_J(t+1))$. 
    
    Since $\lev_{v_0}(f_J(t+1)) \in \{0, \dots, M\}$, then $f_J(t+1)\in V(B_M)$. Since $F$ is collision-free, it follows that $W^{v_0}_=(t+1,f_J(t+1))=\emptyset$. It follows that $W^{v_0}_>(t+1,f_J(t+1))=W^{v_0}_>(t,f_J(t))\setminus\{r\}$. Hence, $|W^{v_0}_>(t+1,f_J(t+1))|=|W^{v_0}_>(t,f_J(t))|-1$.
	
	The proof for 2. is analogous. 
\end{proof}

\begin{proposition}\label{trap}
    Let $B_M=v_0v_1\ldots v_M$ be an $M$-bridge of a connected graph $G$ on $n$ vertices, and let $F=\{f_1,\ldots,f_p\}$ be a patient collision-free $\ell$-march on $G$, where $p \geq n - (M + 1)$. For any $J \in \{1,\ldots,p\}$ and any $t_0 \in \{0,\ldots,\ell\}$, if $\lev_{v_0}(f_J(t_0))\leq -1$, then $\lev_{v_0}(f_J(t))\leq M$ for all $t \in \{0, \dots, \ell\}$.
\end{proposition}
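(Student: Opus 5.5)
The plan is to argue by contradiction and use the number of vacancies strictly above the player, $w(t):=|W^{v_0}_>(t,f_J(t))|$, as a potential. First reduce to forward time. If $\lev_{v_0}(f_J(t_0))\leq -1$ and $\lev_{v_0}(f_J(t_1))\geq M+1$ for some $t_1<t_0$, pass to the reverse march $F^{-1}$ (Definition \ref{def:reverse-march}). It is again patient and collision-free, and in it the corresponding times satisfy $t_1'>t_0'$. So it suffices to rule out the existence of $t_1>t_0$ with $\lev_{v_0}(f_J(t_1))\geq M+1$. Note also that the dual $D_F$ has $n-p\leq M+1$ vacancies.

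Assume such a $t_1$ exists, and let $b>t_0$ be the first time with $\lev_{v_0}(f_J(b))\geq M+1$. Every path from $C_{v_0}$ to $C_{v_M}$ passes through $v_0,v_1,\ldots,v_M$ in order, and each move of $f_J$ changes its level by at most one. Hence we can let $a$ be the last time in $[t_0,b)$ with $\lev_{v_0}(f_J(a))=-1$. Then $f_J(a+1)=v_0$, $f_J(b-1)=v_M$, and $f_J(t)\in V(B_M)$ for all $t\in\{a+1,\ldots,b-1\}$. At time $a$ the vertex $v_0$ is vacant, and the vacancy there swaps with $f_J$. So at time $a+1$ one vacancy sits at level $-1<0$. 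Since there are at most $M+1$ vacancies in total, $w(a+1)\leq M$.

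Next track $w$ on the interval $a+1\leq t\leq b-1$, during which the player stays on the bridge, so its level lies in $\{0,\ldots,M\}$. If $f_J$ does not move at step $t\to t+1$ (whoever else moves), Proposition \ref{stay} gives $w(t+1)=w(t)$. If $f_J$ moves up or down along the bridge, Proposition \ref{move} gives $w(t+1)=w(t)-1$ or $w(t+1)=w(t)+1$ respectively; its level hypotheses hold on this range. By induction,
\[
w(t)=w(a+1)-\lev_{v_0}(f_J(t)) \quad\text{for } t\in\{a+1,\ldots,b-1\}.
\]
At $t=b-1$ we have $\lev_{v_0}(f_J(b-1))=M$, so $w(b-1)\leq M-M=0$, i.e. $w(b-1)=0$.

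Finally, the step $b-1\to b$ moves $f_J$ from $v_M$ to a vertex at level $M+1$. This vertex must have been vacant at time $b-1$, so that vacancy lies strictly above $f_J(b-1)$, giving $w(b-1)\geq 1$. Equivalently, Proposition \ref{bridge} gives $0\leq w(b)\leq w(b-1)-1$. Either way this contradicts $w(b-1)=0$. The step needing the most care is the bookkeeping at the two ends: justifying $w(a+1)\leq M$ from the swap at $v_0$, and noting that the final step $v_M\to$ level $M+1$ lies outside the range of Proposition \ref{move} and so must be handled by Proposition \ref{bridge}.
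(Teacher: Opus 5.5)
Your proof is correct and follows essentially the same route as the paper's: isolate the last passage of $f_J$ from level $-1$ onto the bridge before it first reaches level $M+1$, count vacancies on one side of $f_J$ using Propositions \ref{stay} and \ref{move}, and get a contradiction from the extra vacancy needed for the final step onto level $M+1$. The differences are cosmetic. You track $|W^{v_0}_>|$, which matches Proposition \ref{move} as stated, whereas the paper tracks the complementary $|W^{v_0}_<|$. You also dispatch the case $t<t_0$ by passing to $F^{-1}$ instead of arguing ``analogously''.
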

\begin{proof} 
    Let $q = n - p$. By our hypothesis, $q \leq M+1$. Let $\lev_{v_0}(f_J(t_0))\leq -1$. Suppose, on the contrary, that there exists $m > t_0$ such that $\lev_{v_0}(f_J(m)) = M+1$. Let $t_1, t_2\in\{0,\ldots,\ell\}$ be such that $t_0\leq t_1<t_2\leq m$, where $\lev_{v_0}(f_J(t_1))=-1$, $\lev_{v_0}(f_J(t_2))=M+1$, and $0\leq\lev_{v_0}(f_J(k))\leq M$ for all $t_1<k<t_2$. 

     It follows from Proposition \ref{bridge} that $|W^{v_0}_<(t_1+1,f_J(t_1+1))|\geq 1$. Likewise, it follows from a simple induction using Propositions \ref{stay} and \ref{move} that if $f_J(k)=v_c$, then $|W^{v_0}_<(k,f_J(k))| = |W^{v_0}_<(t_1+1,f_J(t_1+1))|+c \geq c+1$, for each $k\in\{t_1+1,\ldots,t_2-1\}$.

     Because $\lev_{v_0}(f_J(t_2))=M+1$ and the choice of $t_2$, we have $\lev_{v_0}(f_J(t_2-1))=M$. Hence, $|W^{v_0}_<(t_2-1,f_J(t_2-1))|\geq M+1$. 

     Since $f_J(t_2-1)\not=f_J(t_2)$, there exists $r\in \{1,\ldots,q\}$ such that $g_r(t_2-1)=f_J(t_2)$. Therefore, $\lev_{v_0}(g_r(t_2-1))=M+1$ and it follows that $r \not\in W^{v_0}_<(t_2-1,f_J(t_2-1))$.

     Thus, 
     \begin{align*}
         q=|D_F(t_2-1)| &\geq |W^{v_0}_<(t_2-1,f_J(t_2-1))\cup \{r\}| \\
                        & =|W^{v_0}_<(t_2-1,f_J(t_2-1))|+1\\
                        & \geq M+2,
     \end{align*}
     which contradicts our initial assumption that $q \leq M+1$.

    Furthermore, for $m < t_0$ the things can be proven analogously by considering the appropriate set $W^{v_0}_>$, and obtaining the same contradiction. Thus, $\lev_{v_0}(f_J(t)) \leq M$ for all $t \in \{0, \dots, \ell\}$.
\end{proof}

Proposition \ref{trap} shows that when at most $M+1$ vacancies are available, a player starting on one side of an $M$-bridge can never reach the opposite side. We now use this obstruction to prove that no player can visit all vertices of the graph, which yields the desired upper bound on the Cartesian $1$-capacity.

\begin{theorem}\label{bridgebound}
	Let $G$ be a connected graph with $n$ vertices and an $M$-bridge $B_M=v_0v_1\ldots v_M$. If $F=\{f_1,\ldots,f_p\}$ is a patient collision-free $\ell$-march such that $p \geq n - (M+1)$, then $\orb(F,f_s(0))\not=V(G)$ for all $s\in\{1,\ldots,p\}$.    
\end{theorem}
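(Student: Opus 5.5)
We derive the theorem from Proposition \ref{trap}, applied to $B_M$ read in both directions. Fix $s\in\{1,\ldots,p\}$ and suppose, for contradiction, that $\orb(F,f_s(0))=V(G)$. Then $f_s$ visits every vertex of $G$, and in particular both components $C_{v_0}$ and $C_{v_M}$ of $G-B_M$.

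First I check that each side contains a vertex off the bridge. By Definition \ref{def:m-bridge}, $\deg(v_0)\geq 2$. Since $v_0v_1$ is a bridge and $v_0$ has only one neighbour on $B_M$, $v_0$ has a neighbour $u\in C_{v_0}$ with $u\neq v_1$. Hence $\lev_{v_0}(u)=-1$. Similarly $\deg(v_M)\geq 2$ gives a neighbour $w$ of $v_M$ with $w\in C_{v_M}$ and $w\neq v_{M-1}$. Since $v_{M-1}v_M$ is a bridge, every path from $v_0$ to $w$ passes through $v_M$. So $\mathrm{d}(w,v_0)=M+1$ and $\lev_{v_0}(w)=M+1$. (For $M=1$ one uses $G-B_M=G-v_0v_1$, and the same argument works.)

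Since $f_s$ visits $u$, there is some $t_0$ with $\lev_{v_0}(f_s(t_0))\leq -1$. The hypothesis $p\geq n-(M+1)$ is exactly the hypothesis of Proposition \ref{trap}. That proposition therefore gives $\lev_{v_0}(f_s(t))\leq M$ for all $t\in\{0,\ldots,\ell\}$. But $f_s$ must also visit $w$, whose level is $M+1$, which is a contradiction. Therefore $\orb(F,f_s(0))\neq V(G)$, and since $s$ was arbitrary this proves the theorem.

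The main point to verify is that Proposition \ref{trap} covers both time directions. Its proof treats $m>t_0$ explicitly and says the case $m<t_0$ is analogous, handled by reversing the march with $F^{-1}$ and using $W_>^{v_0}$. I would rely on that analogous case, or reduce to it by applying the proposition to $F^{-1}$, which is again patient and collision-free. With both directions available, it does not matter whether $f_s$ visits $u$ before or after $w$.
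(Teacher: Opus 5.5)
Your proof is correct and follows the paper's argument: take a vertex of level $-1$ adjacent to $v_0$ and a vertex of level $M+1$ adjacent to $v_M$, then apply Proposition~\ref{trap} to conclude that no player can visit both. Your worry about time direction is already covered by the statement of Proposition~\ref{trap}, which bounds $\lev_{v_0}(f_J(t))$ for all $t\in\{0,\ldots,\ell\}$, and your reduction to $F^{-1}$ is a valid justification of the backward case that the paper only calls analogous.
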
 
\begin{proof} 
Since $\deg(v_0)\geq 2$, there exists $x\in V(G)$ such that $\lev_{v_0}(x)=-1$. Similarly, since $\deg(v_M)\geq 2$, there exists $y\in V(G)$ such that $\lev_{v_0}(y)=M+1$. It follows from Proposition \ref{trap} that for any $s\in\{1,\ldots, p\}$, if $f_s(t)=x$, for some $t\in\{0,\ldots, \ell\}$, then $f_s(t')\not=y$ for all $t'\in\{0,\ldots,\ell\}$. Hence, $\orb(F,f_s(0))\not=V(G)$.
\end{proof}

\begin{corollary}\label{bridgecor} If a connected graph $G$ on $n$ vertices has an $M$-bridge, then $\ccap{1}{G}\leq n-\brig(G)-2$.
\end{corollary}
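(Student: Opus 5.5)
The plan is to derive the corollary directly from Theorem \ref{bridgebound}, applied to a longest bridge. Since $G$ has an $M$-bridge for some $M\geq 1$, the set of such $M$ is nonempty. It is also bounded, because an $M$-bridge is a path on $M+1$ distinct vertices, so $M\le n-1$. Hence $\brig(G)=:M^*\geq 1$ is attained, and we may fix an $M^*$-bridge $B_{M^*}=v_0v_1\ldots v_{M^*}$ of $G$.

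Next, I would argue by contradiction. Suppose $\ccap{1}{G}\geq n-M^*-1$. By Definition \ref{def:Cartesian-capacity}, there is a patient $\ell$-tour $F=\{f_1,\ldots,f_c\}$ with $m_G(F)=1$ and $c=\ccap{1}{G}\geq n-(M^*+1)$. Because $m_G(F)\geq 1$, this $F$ is in particular a patient collision-free $\ell$-march. Being a tour means every $f_s$ is surjective onto $V(G)$, that is, $\orb(F,f_s(0))=V(G)$ for every $s$.

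Theorem \ref{bridgebound}, applied with $M=M^*$ and $p=c\geq n-(M^*+1)$, says that $\orb(F,f_s(0))\neq V(G)$ for every $s$. This contradicts the previous paragraph, so $\ccap{1}{G}\leq n-M^*-2=n-\brig(G)-2$.

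The only delicate points are bookkeeping. First, $\ccap{1}{G}$ must be well defined. It is taken over graphs that are not paths, and if that fails the statement is vacuous or follows from the convention of Definition \ref{def:Cartesian-capacity}; if $G$ admits no valid tour at all, the bound holds trivially. Second, Theorem \ref{bridgebound} must apply to a march with $p\geq n-(M+1)$ players, and not only to exactly $n-M-1$ players; the theorem is stated with $p\geq n-(M+1)$, so this is covered. I expect no real obstacle beyond noting that the maximal bridge length is attained.
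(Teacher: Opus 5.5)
Your proof is correct and is essentially the paper's argument: the paper also applies Theorem \ref{bridgebound} to show that no patient collision-free march with $p\geq n-(M+1)$ players can be a tour, and then takes $M=\brig(G)$. Your remarks that $\brig(G)$ is attained and that $\ccap{1}{G}$ is well defined are bookkeeping the paper leaves implicit; they do no harm.
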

\begin{proof}
    By Theorem \ref{bridgebound}, if $p \geq n - M - 1$ and $F=\{f_1,\ldots,f_p\}$ is a patient collision-free $\ell$-march, then for all $i\in\{1, \ldots, p\}$, $\orb(F, f_i(0))\not=V(G)$, hence $F$ is not a patient collision-free $\ell$-tour. Thus, $p< n - M - 1$, for every $M$ for which there is an $M$-bridge, including $M=\brig(G)$.
\end{proof}

The bound from Corollary \ref{bridgecor} is tight, as demonstrated, for instance, by the double star $S_{2,2}$, obtained by joining the centres of two copies of $K_{1,2}$. Indeed, for this graph we have $n=6$, $\brig(S_{2,2})=1$, and $\ccap{1}{S_{2,2}}=3$.

\section{Trees}\label{sec:trees}

In this section, we specialize the general results of the previous section to trees. Since every edge of a tree is a bridge, the notions of branch-vertex density and $M$-bridges become particularly effective tools for analysing Cartesian $1$-capacity. By combining the lower bound from Section \ref{low_Car} with the upper bound from Section \ref{upp_Car}, we derive an exact characterization of the Cartesian $1$-capacity of trees.

\begin{proposition}\label{densebridge}
	Suppose that $G$ is a connected branch-vertex $N$-dense graph, for some $N\geq 3$. If $G$ has an $M$-bridge for some integer $M\geq 1$, then $M\leq N-2$. 
\end{proposition}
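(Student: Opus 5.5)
The plan is to argue by contradiction: assuming an $M$-bridge with $M\geq N-1$, we exhibit a path of length $N$ that fails both conditions of Definition~\ref{def:N-dense}. Since being branch-vertex $N$-dense requires in particular that \emph{every} path of length $N$ satisfy condition 1 or condition 2, such a path is a contradiction. Monotonicity of bridges (an $M$-bridge contains an $(M-1)$-bridge) is not needed; the construction works directly from an $M$-bridge with $M\ge N-1$.

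So let $B_M=v_0v_1\ldots v_M$ be an $M$-bridge with $M\geq N-1$. Since $\deg(v_0)\geq 2$, choose a neighbour $u$ of $v_0$ with $u\neq v_1$. We first check that $u\notin V(B_M)$.
\begin{enumerate}
\item For $2\leq i\leq M-1$, we have $\deg(v_i)=2$ with neighbours $v_{i-1},v_{i+1}$. Hence $u=v_i$ would force $v_0\in\{v_{i-1},v_{i+1}\}$. This is impossible for $i\geq 2$ because the vertices of a path are distinct.
\item If $u=v_M$, then $v_0v_1\ldots v_Mv_0$ is a cycle. This contradicts the assumption that $v_0v_1$ is a bridge.
\end{enumerate}
Therefore $P=u\,v_0\,v_1\ldots v_{N-1}$ is a path, and it has length $N$. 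It is well defined because $N-1\leq M$.

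Next we relabel $P$ as $w_0w_1\ldots w_N$, with $w_0=u$ and $w_j=v_{j-1}$ for $j\geq 1$. We check the two conditions.
\begin{enumerate}
\item Condition 1 concerns the vertices $w_2,\ldots,w_{N-1}$, which are $v_1,\ldots,v_{N-2}$. Since $N-2\leq M-1$, all of these are interior vertices of the bridge, so each has degree exactly $2$. Thus condition 1 fails.
\item Condition 2 requires $\deg(w_N)=1$, where $w_N=v_{N-1}$. If $N-1\leq M-1$, this vertex is interior and has degree $2$. If $N-1=M$, it is $v_M$, which has degree at least $2$ by the definition of an $M$-bridge. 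In either case $\deg(w_N)\neq 1$, so condition 2 fails.
\end{enumerate}

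Hence $P$ violates Definition~\ref{def:N-dense}, contradicting that $G$ is branch-vertex $N$-dense, and therefore $M\leq N-2$. The only delicate step is showing that $u$ is genuinely off the bridge, so that $P$ is a path; this is where the bridge condition on $v_0v_1$ is used. Note also that the index shift is what makes the interior range $\{2,\ldots,N-1\}$ of condition 1 avoid both $v_0$, which may have high degree, and the endpoint $v_M$.
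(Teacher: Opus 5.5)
Your proof is correct and follows essentially the same route as the paper's: you prepend a neighbour of the bridge's first endpoint to obtain a path of length $N$ that violates both conditions of Definition~\ref{def:N-dense}. The differences are minor. The paper first passes to an $(N-1)$-bridge via the monotonicity remark, whereas you truncate an $M$-bridge with $M\geq N-1$ directly. The paper also simply asserts that the added neighbour lies off the bridge, which you justify explicitly using the degree-$2$ interior vertices and the fact that $v_0v_1$ is a bridge.
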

\begin{proof}
	Suppose on the contrary that $G$ has a $(N-1)$-bridge $v_1v_2\ldots v_{N}$. Then $\deg(v_1)\not=1$, $\deg(v_{N})\not=1$ and $\deg(v_i)=2\not\geq 3$ for all $i\in\{2,\ldots,N-1\}$. Since $\deg(v_1)\not=1$, there exists a vertex $v_0\not \in \{v_1,v_2,\ldots,v_{N}\}$ that is adjacent to $v_1$. Moreover, $P=v_0v_1\ldots v_{N}$ is a path in $G$. By Definition \ref{def:N-dense} of branch-vertex $N$-dense graphs, $P$ must satisfy at least one of the conditions of the definition. Since $\deg(v_i)=2$ for all $i\in\{2,\ldots,N-1\}$, condition 1 is not satisfied. Since $\deg(v_1)\not=1$ and  $\deg(v_{N})\not=1$ condition 2 is also not satisfied. The assertion follows immediately.
\end{proof}

\begin{proposition}\label{equa}
	If $T$ is a branch-vertex $N$-dense tree for some $N\geq 2$, then $N=\brig(T)+2$.
\end{proposition}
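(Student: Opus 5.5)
The plan is to prove the two inequalities $N\geq \brig(T)+2$ and $N\leq \brig(T)+2$ separately. Write $M:=\brig(T)$ throughout. Since every edge of a tree is a bridge, an $M$-bridge of $T$ is simply a path whose interior vertices have degree exactly $2$ and whose two endpoints have degree at least $2$. Recall also that a branch-vertex $N$-dense graph has a vertex of degree at least $3$, so $T$ is not a path.

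\emph{Lower bound $N\geq M+2$.} If $N\geq 3$, Proposition~\ref{densebridge} gives $M\leq N-2$ directly when $T$ has some $1$-bridge, and the inequality is trivial when $M=0$. If $N=2$, then $T$ is a star $K_{1,k}$ with $k\geq 3$, as noted after Definition~\ref{def:N-dense}. Every edge of a star has a leaf endpoint of degree $1$, so $T$ has no $1$-bridge. Hence $M=0=N-2$.

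\emph{Upper bound $N\leq M+2$.} Set $K:=M+2$. By the minimality in Definition~\ref{def:N-dense}, it suffices to show two things: $T$ contains a path of length $K$, and every path of length $K$ satisfies condition 1 or 2 of that definition.

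For existence, consider two cases.
\begin{itemize}
\item If $M\geq 1$, take an $M$-bridge $v_1\ldots v_{M+1}$. Both endpoints have degree at least $2$, so we can attach a further neighbour at each end. This gives a path of length $M+2$, and it is a path because $T$ is acyclic.
\item If $M=0$, a vertex of degree at least $3$ together with two of its neighbours gives a path of length $2$.
\end{itemize}

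For the density condition, suppose for contradiction that $P=v_0v_1\ldots v_K$ satisfies neither condition. Then $\deg(v_i)=2$ for all $2\leq i\leq K-1$, since these are interior vertices of $P$ and are not branching. Also $\deg(v_1)\geq 2$ because $v_1$ is interior.
\begin{itemize}
\item Suppose $\deg(v_K)\geq 2$. Then $v_1v_2\ldots v_K$ has interior vertices of degree $2$ and endpoints of degree at least $2$. It is therefore a $(K-1)=(M+1)$-bridge, contradicting the maximality of $M$.
\item Otherwise $\deg(v_K)=1$. Since condition 2 fails, $\deg(v_1)\leq 2$, so $\deg(v_1)=2$. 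Now look at $v_0$.
\begin{itemize}
\item If $\deg(v_0)\geq 2$, then $v_0v_1\ldots v_{K-1}$ is an $(M+1)$-bridge. Its interior vertices $v_1,\ldots,v_{K-2}$ have degree $2$, and $\deg(v_{K-1})\geq 2$. This again contradicts maximality.
\item If $\deg(v_0)=1$, then all vertices of $P$ have their full neighbourhoods inside $P$. Since $T$ is connected, $T=P$ is a path, which contradicts the existence of a vertex of degree at least $3$.
\end{itemize}
\end{itemize}

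The case $M=0$, $K=2$ is covered by the same argument, with $v_1=v_{K-1}$ and the $1$-bridges $v_1v_2$ and $v_0v_1$.

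The main point requiring care is this case analysis on the degrees of the endpoints $v_0$ and $v_K$, including the degenerate small cases. I expect it to be the only real obstacle; the rest follows from the earlier results and the tree structure.
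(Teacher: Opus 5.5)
Your proof is correct and takes essentially the same route as the paper. The lower bound uses Proposition~\ref{densebridge}, with the star case $N=2$ handled separately, and the upper bound uses the same case analysis on $\deg(v_K)$, $\deg(v_1)$ and $\deg(v_0)$, which either produces an $(M+1)$-bridge or forces $T$ to be a path. Your version is slightly more careful than the paper's: you explicitly check that a path of length $M+2$ exists, which the minimality in Definition~\ref{def:N-dense} requires, and you absorb the case $M=0$ into the same argument rather than splitting off $N=2$.
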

\begin{proof} 
If $N=2$, then $T$ is a star graph and the assertion follows. 

Assume $N\geq 3$. Let $k := \brig(T)+2$. It follows from Proposition \ref{densebridge} that $N\geq k$. 

We prove $N\leq k$ by contradiction. Assume that $N\geq k+1$. This implies $T$ is not $k$-dense, so there exists a path $P = v_0v_1\ldots v_k$ of length $k$ failing both conditions of Definition \ref{def:N-dense}. Failing the first condition implies $\deg(v_i)=2$ for all $i\in \{2,\ldots,k-1\}$. Failing the second condition implies either $\deg(v_k)\geq 2$ or $\deg(v_1)=2$. 

We consider two cases. If $\deg(v_k) \geq 2$, then since $v_1$ is an internal vertex ($\deg(v_1) \geq 2$), the path $v_1\ldots v_k$ forms a $(k-1)$-bridge. If $\deg(v_1) = 2$ and $\deg(v_k) = 1$, consider the path $v_0v_1\ldots v_{k-1}$ of length $k-1$. If $\deg(v_0) = 1$, then the entire tree $T$ would be exactly the path $P$, but $T$ cannot be a path graph since it is branch-vertex $N$-dense for $N \geq 3$. Thus, $v_0$ must be adjacent with other vertices in $T$, meaning $\deg(v_0) \geq 2$. Furthermore, since $v_{k-1}$ is an internal vertex of $P$, $\deg(v_{k-1}) = 2$. All internal vertices $v_1, \ldots, v_{k-2}$ of the path also have a degree of 2. Therefore, $v_0v_1\ldots v_{k-1}$ forms a $(k-1)$-bridge.

In both cases, $T$ contains a bridge of length $k-1$. Therefore, $\brig(T) \geq k-1 = (\brig(T) + 2) - 1 = \brig(T) + 1$, which is a contradiction. Thus, our assumption that $N \geq k+1$ is false, and we conclude $N = k = \brig(T)+2$.
\end{proof}

Notice that any tree $T$ that is neither a path nor a star graph contains an $M$-bridge, for some $M \geq 1$. Moreover, paths of length at least three contain an $M$-bridge, and star graphs contain no $M$-bridges.

\begin{proposition}\label{prop:trees-are-N-dense}
    If $T$ is an arbitrary tree different from a path, then $T$ is branch-vertex $(\brig(T)+2)$-dense.
\end{proposition}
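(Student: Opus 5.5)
Set $k:=\brig(T)+2$. Since $T$ is not a path, it has a vertex of degree at least $3$, so $T$ has at least four vertices. Definition \ref{def:N-dense} asks for two things, and I will prove each.

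\emph{Existence and the path property at $k$.} First, (a): $T$ contains a path of length $k$, and every path of length $k$ satisfies condition 1 or condition 2 of the definition. Second, (b): no integer $N'$ with $2\le N'<k$ has the same property. Once both hold, $k$ is the minimum, and Proposition \ref{equa} is consistent with this.

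\emph{Step (a), the path property.} Take any path $P=v_0v_1\ldots v_k$ and suppose condition 1 fails, so $\deg(v_i)=2$ for all $i\in\{2,\ldots,k-1\}$; I will show condition 2 holds. If $\deg(v_k)\ge 2$, then $v_1$ is internal to $P$, so $\deg(v_1)\ge 2$. Every edge of a tree is a bridge, so $v_1v_2\ldots v_k$ is a $(k-1)$-bridge, i.e.\ a $(\brig(T)+1)$-bridge, which contradicts maximality. Hence $\deg(v_k)=1$. If $\deg(v_1)=2$, then, as in the proof of Proposition \ref{equa}, consider $v_0\ldots v_{k-1}$. If $\deg(v_0)=1$, then $P$ is a connected component of $T$, so $T=P$ is a path, which is excluded. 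So $\deg(v_0)\ge 2$, and $v_0\ldots v_{k-1}$ is again a $(k-1)$-bridge, a contradiction. Therefore $\deg(v_1)\ge 3$ and condition 2 holds.

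\emph{Step (a), existence.} I also need a path of length $k$, and this is the main obstacle. If $\brig(T)=0$, then $T$ is a star $K_{1,m}$ with $m\ge 3$, and a leaf--center--leaf path has length $2=k$. If $\brig(T)=M\ge 1$, take an $M$-bridge $v_0\ldots v_M$. Both endpoints have degree at least $2$, so I extend at each end by a neighbour off the bridge. Since $T$ is acyclic, the result is a path of length $M+2=k$.

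\emph{Step (b), minimality.} Let $2\le N'<k$, so $\brig(T)\ge N'-1\ge 1$. Take an $(N'-1)$-bridge $v_1\ldots v_{N'}$; it exists because sub-bridges of bridges are bridges. Since $\deg(v_1)\ge 2$, $v_1$ has a neighbour $v_0$ off the bridge, giving the path $v_0v_1\ldots v_{N'}$ of length $N'$. This is exactly the argument of Proposition \ref{densebridge}. Condition 1 fails because the internal vertices $v_2,\ldots,v_{N'-1}$ have degree $2$. Condition 2 fails because $\deg(v_{N'})\ge 2$. Note that for $N'=2$ the index range in condition 1 is empty, so condition 1 fails trivially. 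Hence $N'$ does not work, and $T$ is branch-vertex $k$-dense.
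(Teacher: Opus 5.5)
Your proof is correct, and its core matches the paper's: the argument that every path of length $\brig(T)+2$ satisfies condition 1 or 2 (first forcing $\deg(v_k)=1$, then $\deg(v_1)\ge 3$, each time by exhibiting a forbidden $(\brig(T)+1)$-bridge) is essentially the paper's argument verbatim. The only difference is that the paper gets minimality by citing Proposition \ref{equa}, which rests on Proposition \ref{densebridge}, whereas you inline that minimality argument and also explicitly check that a path of length $\brig(T)+2$ exists, a point the paper leaves implicit.
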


\begin{proof}
    If $T$ is a star graph $K_{1,k}$, for some integer $k\geq 3$, then $\brig(T)=0$ and it is trivial to verify that $T$ is branch-vertex $2$-dense.

    Suppose $T$ is different from a path or a star graph.
    Denote $M:=\brig(T)$ and $N:=\brig(T)+2$. By maximality of $M$ the tree $T$ contains no bridge of length $M+1=N-1$. Let $P=v_0v_1\ldots v_N$ be an arbitrary path in $T$. We will show that $P$ satisfies at least one condition of Definition \ref{def:N-dense} (branch-vertex $N$-dense graphs). 
    
    Suppose condition 1 is not satisfied, hence $\deg(v_i)\leq 2$ for every $i\in\{2,\ldots,N-1\}$. Moreover, these vertices belong to $P$, hence $\deg(v_i)=2$ for every $i\in\{2,\ldots,N-1\}$. We will show that condition 2 of Definition \ref{def:N-dense} must be true, namely $\deg(v_1)\geq 3$ and $\deg(v_N)=1$. 

    First, towards a contradiction, suppose that $\deg(v_N)\geq 2$. Consider the path $P'=v_1 v_2\ldots v_N$. The vertex $v_1$ is adjacent to $v_0$ and $v_2$, hence $\deg(v_1)\geq 2$. Since we have that $\deg(v_i)=2$ for every $i\in\{2,\ldots,N-1\}$, it follows that $P'$ is an $(N-1)$-bridge, i.e. $(M+1)$-bridge, which contradicts the maximality of $M$. Hence, $\deg(v_N)=1$.

    Second, towards a contradiction, suppose that $\deg(v_1) < 3$. Since $v_1$ is an internal vertex of the path $P$, it follows that $\deg(v_1)=2$. Consider the path $P'=v_0 v_1\ldots v_{N-1}$. Since $v_{N-1}$ is adjacent to $v_{N-2}$ and $v_{N}$, we have $\deg(v_{N-1})\geq 2$. Let us consider $v_0$ next. If $\deg(v_0)\geq 2$, then again we have that $P'$ is an $(N-1)$-bridge, i.e. $(M+1)$-bridge, which contradicts the maximality of $M$. If $\deg(v_0)=1$, it follows from the fact that $\deg(v_N)=1$ and that $\deg(v_i)=2$ for every $i\in\{1,\ldots,N-1\}$ that the entire component of $T$ is exactly the path $P'$, contradicting the assumption that $T$ is not a path. Hence, $\deg(v_1)\geq 3$.

    It follows that for $N=M+2$ at least one of the conditions of Definition \ref{def:N-dense} is satisfied, hence $T$ is branch-vertex $N'$-dense for some $N'\geq 2$. Using Proposition \ref{equa} the assertion follows immediately.
\end{proof}

In the next lemma we will need the following result.

\begin{theorem}[\cite{gra26spancapacities}]\label{thm:c-topful}
    For connected graph $G$ on more than three vertices $\ccap{1}{G}=n-1$ if and only if it has no cut vertices.
\end{theorem}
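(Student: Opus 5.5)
The plan is to prove the two directions separately. Both rely on the observation that with $n-1$ players there is exactly one vacancy. In a patient collision-free march every step then swaps the vacancy with an adjacent player, so the dual $D_F=\{g\}$ is a single walk. The trivial upper bound $\ccap{1}{G}\le n-1$ holds because $n$ players cannot move at all. For $G$ of order $n>3$ without cut vertices, $G$ is $2$-connected and not a path, so $d=1\le\cvSpan{G}$ and the capacity is defined.

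\emph{Cut vertex $\Rightarrow$ $\ccap{1}{G}<n-1$.} Let $x$ be a cut vertex, and let $A,B$ be two distinct components of $G-x$. Suppose $F$ is a patient collision-free tour with $n-1$ players, and track a player $f_s$ whose track visits both $A$ and $B$. Some time interval $[s_1,t_1]$ has the following shape:
\begin{itemize}
\item $f_s$ steps from some component $A'$ of $G-x$ onto $x$ at time $s_1$;
\item $f_s$ stays on $x$ until $t_1$;
\item $f_s$ steps into a different component $B'$ at time $t_1+1$.
\end{itemize}
Such an interval exists for every player, whether it starts at $x$ or not, since it must eventually pass from one component to another. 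At the moment $f_s$ enters $x$, the vacancy moves from $x$ to the vertex of $A'$ just vacated, so $g(s_1)\in A'$. During $[s_1,t_1]$ the vacancy walks in $G-x$ because $x$ is occupied, hence it stays in $A'$. But the step at $t_1+1$ requires $g(t_1)\in B'\neq A'$, which is a contradiction. So no player's orbit is $V(G)$, and $\ccap{1}{G}\le n-2$.

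\emph{No cut vertex $\Rightarrow$ $\ccap{1}{G}=n-1$.} By Proposition \ref{together} (together with Proposition \ref{full}), it suffices to show the following. For every $H$ with $|H|=n-1$ and every $v\in H$, there is a patient collision-free march from $H$ in which the player at $v$ visits every vertex. As in Theorem \ref{almost}, this reduces to a \emph{switch} statement: for any configuration and any vertices $x\ne y$, the player at $x$ can be brought to $y$. The construction has three steps.
\begin{enumerate}
\item Since $G$ is $2$-connected, Menger's theorem gives a cycle $C$ through $x$ and $y$.
\item Because $G-x$ is connected, Proposition \ref{cover} applied in $G-x$ moves the single vacancy to a neighbour of $x$ on $C$ without disturbing the player at $x$.
\item The vacancy is then pushed once around $C$, moving it step by step to the next vertex of $C$. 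This rotates every player on $C$ by one position along $C$ and leaves everything else fixed. Iterating this rotation carries the player from $x$ to $y$.
\end{enumerate}
Chaining switches $v_1\to v_2\to\cdots\to v_n$ as in Theorem \ref{almost} gives a march whose orbit is $V(G)$.

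The main obstacle I expect is the rotation step. I must check carefully that one full circuit of the vacancy around $C$ shifts each player on $C$ by exactly one position in the opposite direction, with all moves patient and collision-free. I must also check that re-entering the vacancy onto $C$ between rotations can always be done in $G$ minus the tracked player's current vertex. If the bookkeeping of a full rotation is awkward, I would instead rotate a single step at a time. In that version the vacancy returns to a neighbour of the tracked player via a path in $G-(\text{player's vertex})$, which exists since there is no cut vertex, and then it swaps with the player, moving the player one step along $C$.
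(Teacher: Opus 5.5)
Your proof is correct. Note that the paper does not prove this theorem: it is imported from \cite{gra26spancapacities}, so there is no in-paper argument to match. Your argument is self-contained, given Propositions~\ref{together} and~\ref{cover}.

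Measured against the paper's own tools, the two directions compare as follows.
\begin{itemize}
\item Your cut-vertex direction is the one-vacancy counterpart of the obstruction in Proposition~\ref{trap} and Theorem~\ref{bridgebound}. With a single vacancy, the level counting reduces to your observation that the vacancy is confined to one component of $G-x$ while $x$ is held. It is also genuinely more general than what the paper's machinery would give. A cut vertex need not lie on any $M$-bridge (two triangles sharing a vertex have a cut vertex and no bridges), so Corollary~\ref{bridgecor} could not have been used here.
\item Your construction direction follows the template of Lemma~\ref{switch} and Theorem~\ref{almost}. The difference is that $2$-connectivity, rather than branch-vertex density, is what lets the vacancy get around the tracked player.
\end{itemize}

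On the step you flagged, the rotation works, but the correct unit is $k-1$ moves with $k=|V(C)|$, not a full return of the vacancy. Returning the vacancy to its starting vertex takes $k$ moves and moves the player from $c_2$ twice. After the vacancy travels $c_1\to c_2\to\cdots\to c_{k-1}\to c_0$ ($k-1$ moves), every player on $C$ has moved exactly once ($c_i\to c_{i-1}$). The vacancy then sits at $c_0$, the forward neighbour of the tracked player's new vertex $c_{k-1}$. So you simply keep pushing the vacancy in the same direction, and no re-entry onto $C$ is ever needed.

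Your one-step fallback is even simpler and makes the cycle (and Menger) unnecessary. Along any $x$--$y$ path $u_0u_1\cdots u_m$, route the vacancy to $u_{i+1}$ inside the connected graph $G-u_i$, then swap it with the tracked player.

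Finally, $\ccap{1}{G}$ requires $m_G(F)=1$ exactly, not just collision-freeness. This holds automatically here: if the vacancy is at $u$, then $G-u$ is connected on at least $3$ vertices, so it contains two adjacent players.
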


\begin{lemma}\label{lem:stars}
    If $T$ is a star graph $K_{1,k}$, for some integer $k\geq 3$, then $\ccap{1}{T}=k-1$. 
\end{lemma}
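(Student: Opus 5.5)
We prove the two inequalities separately. Write $T=K_{1,k}$ with centre $c$ and leaves $u_1,\ldots,u_k$, so $n=k+1$. For the lower bound, Proposition \ref{prop:trees-are-N-dense} shows that $T$ is branch-vertex $2$-dense. However, Corollary \ref{lowerbound} requires $N\geq 3$, so it does not apply, and we construct the tour directly. Take $p=k-1$ players initially on $u_1,\ldots,u_{k-1}$, which leaves $c$ and $u_k$ vacant. While $c$ is vacant, every leaf occupied by a player has a free neighbour. Consider the elementary operation ``move the player on $u_i$ to $c$, then to the vacant leaf $u_j$''. It is patient and collision-free, and its net effect on the configuration is that the player from $u_i$ now sits on $u_j$ and $u_i$ is the new vacant leaf. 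Composing these elementary moves lets us cyclically rotate the $k-1$ players around the $k$ leaves, with the vacant leaf travelling backwards.

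Concretely, repeatedly move the player on $u_{i-1}$ into the vacant leaf $u_i$ (indices mod $k$). After $k$ such moves every player has advanced one leaf along the cycle $u_1u_2\cdots u_ku_1$. Repeating this $k$ times, each player passes through $c$ and occupies every leaf, so each walk is surjective. Since $c$ is vacant at every stage except when a player is passing through it, distinct players never share a vertex. This gives a patient tour with $k-1$ players and $m_T(F)\ge 1$. Equality $m_T(F)=1$ holds, since for $k\geq 3$ some player is on a leaf while another is at $c$. This also uses that $\sigma^\square_V(T)\geq 1$, so $d=1$ is admissible in Definition \ref{def:Cartesian-capacity}. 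Alternatively, one could phrase the argument through Proposition \ref{together}, showing that for each starting leaf the corresponding player can visit all of $V(T)$.

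For the upper bound we must rule out $p=k=n-1$ players. Here Corollary \ref{bridgecor} is unavailable, since stars have no $M$-bridge. Theorem \ref{thm:c-topful} does apply: $T$ has more than three vertices and $c$ is a cut vertex, so $\ccap{1}{T}\neq n-1=k$. It remains to exclude $p=k+1=n$. With $n$ players there is no vacancy, so no player can ever move and the march cannot be a tour. Hence $\ccap{1}{T}\leq k-1$.

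The main obstacle is the bookkeeping in the lower-bound construction: we must check that the composed elementary moves really form a patient march in which each individual walk is surjective. A cleaner fallback is to argue directly that, with one vacant leaf, any chosen player can be moved to any leaf. That player then visits $c$ and every leaf, and Propositions \ref{together} and \ref{full} assemble these marches into a tour.
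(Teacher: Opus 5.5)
Your proof is correct and follows essentially the same route as the paper. The upper bound comes from Theorem~\ref{thm:c-topful} together with the trivial exclusion of $n$ players, and the lower bound is the same rotation of $k-1$ players around the leaves through the centre, which the paper encodes as an explicit dual march $D_F=\{g_1,g_2\}$ of the two vacancies of length $\ell=2(k-1)^2$. One small slip: every player advances one leaf after $k-1$ elementary moves, not $k$; since you perform $k^2\geq k(k-1)$ of them, each walk is still surjective.
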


\begin{proof}
    For any connected graph $G$ it holds that $\ccap{1}{G}\leq |V(G)|-1$ (see \cite{gra26spancapacities}). Note, $T$ has exactly $k+1$ vertices. Since $T$ has a cut-vertex, by Theorem \ref{thm:c-topful}, it holds that $\ccap{1}{T}<k$. 

    To complete the proof, we must show that $\ccap{1}{T} \geq k-1$. We will do this by constructing a patient collision-free tour $F=\{f_1, \ldots, f_{k-1}\}$ on $T$. 
    
    Let $V(T) = \{c, v_0, v_1, \ldots, v_{k-1}\}$, where $c$ is the central branch-vertex with $\deg(c) = k$, and $v_0, \ldots, v_{k-1}$ are the leaves. Let the initial configuration at time $t=0$ be $f_i(0) = v_i$ for all $i \in \{1, \ldots, k-1\}$. The initial set of vacant vertices is therefore $\{c, v_0\}$.
        
    Let $\ell=2(k-1)^2$. We describe an $\ell$-tour $F$ by constructing its dual $D_F=\{g_1,g_2\}$. For each $t \in \{0, \dots, \ell\}$, let
    \[
    g_1(t)=\begin{cases}
       c, & \text{if } t\equiv 0 \pmod 4,\\
       v_{(2\left\lfloor\frac{t}{4}\right\rfloor+1) \bmod k}, & \text{otherwise,} 
    \end{cases}
    \]
    and 
    \[
    g_2(t)=\begin{cases}
       c, & \text{if } t\equiv 2 \pmod 4,\\
       v_{(2\left\lfloor\frac{t+1}{4}\right\rfloor) \bmod k}, & \text{otherwise.} 
    \end{cases}
    \]

   By construction, the sequence $D_F(t)$ systematically alternates a vacancy through the central vertex $c$ to adjacent leaves, ensuring exactly one vacancy moves along a valid edge of $T$ at each step. Moreover, it is clear from the definition of $D_F$ that it is a patient collision-free $\ell$-march. Let $F=\{f_1,\ldots, f_{k-1}\}$ be the dual of $D_F$. Since the dual of a patient march is unique, it is well-defined. Moreover, it is also patient and collision-free. 
    By the construction of $D_F$, the march $F$ systematically shifts every player ($f_i$) to an adjacent leaf every $2(k-1)$ steps. Specifically, for each $i \in \{1, \ldots, k-1\}$ and any integer $t' \geq 0$ such that $2(k-1)t' \le \ell$, it holds that $f_i(2(k-1)t') = v_{(i-t') \bmod k}$. Consequently, evaluated over the set $\{0, 1,\ldots, \ell\}$, the orbit of each function contains every leaf. Furthermore, because any transition of $f_i$ between distinct leaves requires evaluating to the vertex $c$, it follows that $c \in \orb(f_i, f_i(0))$. Therefore, $\orb(f_i, f_i(0)) = V(T)$ for all $i \in \{1, \ldots, k-1\}$, hence $F$ is a tour.
\end{proof}

\begin{figure}
\centering
\begin{subfigure}{.2\textwidth}
\begin{tikzpicture} [scale=1]
\draw (0,0)--(90:1);
\draw (0,0)--(30:1);
\draw (0,0)--(-30:1);
\draw (0,0)--(-90:1);
\draw (0,0)--(-150:1);
\draw (0,0)--(150:1);
\filldraw [yellowball] (0,0) circle (3pt) node[below right] {$c$};
\filldraw [orangeball] (90:1) circle (3pt) node[below right] {$v_0$};
\filldraw [redball] (30:1) circle (3pt) node[below] {$v_1$};
\filldraw [violetball] (-30:1) circle (3pt) node[below] {$v_2$};
\filldraw [blueball] (-90:1) circle (3pt) node[below] {$v_3$};
\filldraw [cyanball] (-150:1) circle (3pt) node[below] {$v_4$};
\filldraw [greenball] (150:1) circle (3pt) node[below] {$v_5$};
\end{tikzpicture} 
\caption{$t=0$}
\end{subfigure}
\hspace{10pt}
\begin{subfigure}{.2\textwidth}
\begin{tikzpicture} [scale=1]
\draw (0,0)--(90:1);
\draw (0,0)--(30:1);
\draw (0,0)--(-30:1);
\draw (0,0)--(-90:1);
\draw (0,0)--(-150:1);
\draw (0,0)--(150:1);
\filldraw [redball] (0,0) circle (3pt) node[below right] {$c$};
\filldraw [orangeball] (90:1) circle (3pt) node[below right] {$v_0$};
\filldraw [yellowball] (30:1) circle (3pt) node[below] {$v_1$};
\filldraw [violetball] (-30:1) circle (3pt) node[below] {$v_2$};
\filldraw [blueball] (-90:1) circle (3pt) node[below] {$v_3$};
\filldraw [cyanball] (-150:1) circle (3pt) node[below] {$v_4$};
\filldraw [greenball] (150:1) circle (3pt) node[below] {$v_5$};
\end{tikzpicture} 
\caption{$t=1$}
\end{subfigure}
\hspace{10pt}
\begin{subfigure}{.2\textwidth}
\begin{tikzpicture} [scale=1]
\draw (0,0)--(90:1);
\draw (0,0)--(30:1);
\draw (0,0)--(-30:1);
\draw (0,0)--(-90:1);
\draw (0,0)--(-150:1);
\draw (0,0)--(150:1);
\filldraw [orangeball] (0,0) circle (3pt) node[below right] {$c$};
\filldraw [redball] (90:1) circle (3pt) node[below right] {$v_0$};
\filldraw [yellowball] (30:1) circle (3pt) node[below] {$v_1$};
\filldraw [violetball] (-30:1) circle (3pt) node[below] {$v_2$};
\filldraw [blueball] (-90:1) circle (3pt) node[below] {$v_3$};
\filldraw [cyanball] (-150:1) circle (3pt) node[below] {$v_4$};
\filldraw [greenball] (150:1) circle (3pt) node[below] {$v_5$};
\end{tikzpicture}
\caption{$t=2$}
\end{subfigure}\\
\vspace{10pt}
\begin{subfigure}{.2\textwidth}
\begin{tikzpicture} [scale=1]
\draw (0,0)--(90:1);
\draw (0,0)--(30:1);
\draw (0,0)--(-30:1);
\draw (0,0)--(-90:1);
\draw (0,0)--(-150:1);
\draw (0,0)--(150:1);
\filldraw [violetball] (0,0) circle (3pt) node[below right] {$c$};
\filldraw [redball] (90:1) circle (3pt) node[below right] {$v_0$};
\filldraw [yellowball] (30:1) circle (3pt) node[below] {$v_1$};
\filldraw [orangeball] (-30:1) circle (3pt) node[below] {$v_2$};
\filldraw [blueball] (-90:1) circle (3pt) node[below] {$v_3$};
\filldraw [cyanball] (-150:1) circle (3pt) node[below] {$v_4$};
\filldraw [greenball] (150:1) circle (3pt) node[below] {$v_5$};
\end{tikzpicture} 
\caption{$t=3$}
\end{subfigure}
\hspace{10pt}
\begin{subfigure}{.2\textwidth}
\begin{tikzpicture} [scale=1]
\draw (0,0)--(90:1);
\draw (0,0)--(30:1);
\draw (0,0)--(-30:1);
\draw (0,0)--(-90:1);
\draw (0,0)--(-150:1);
\draw (0,0)--(150:1);
\filldraw [yellowball] (0,0) circle (3pt) node[below right] {$c$};
\filldraw [redball] (90:1) circle (3pt) node[below right] {$v_0$};
\filldraw [violetball] (30:1) circle (3pt) node[below] {$v_1$};
\filldraw [orangeball] (-30:1) circle (3pt) node[below] {$v_2$};
\filldraw [blueball] (-90:1) circle (3pt) node[below] {$v_3$};
\filldraw [cyanball] (-150:1) circle (3pt) node[below] {$v_4$};
\filldraw [greenball] (150:1) circle (3pt) node[below] {$v_5$};
\end{tikzpicture} 
\caption{$t=4$}
\end{subfigure}
\hspace{10pt}
\begin{subfigure}{.2\textwidth}
\begin{tikzpicture} [scale=1]
\draw (0,0)--(90:1);
\draw (0,0)--(30:1);
\draw (0,0)--(-30:1);
\draw (0,0)--(-90:1);
\draw (0,0)--(-150:1);
\draw (0,0)--(150:1);
\filldraw [blueball] (0,0) circle (3pt) node[below right] {$c$};
\filldraw [redball] (90:1) circle (3pt) node[below right] {$v_0$};
\filldraw [violetball] (30:1) circle (3pt) node[below] {$v_1$};
\filldraw [orangeball] (-30:1) circle (3pt) node[below] {$v_2$};
\filldraw [yellowball] (-90:1) circle (3pt) node[below] {$v_3$};
\filldraw [cyanball] (-150:1) circle (3pt) node[below] {$v_4$};
\filldraw [greenball] (150:1) circle (3pt) node[below] {$v_5$};
\end{tikzpicture} 
\caption{$t=5$}
\end{subfigure}
\caption{An illustration of movement for $K_{1,6}$}
    \label{fig:star}
\end{figure}

An example of the first six stages of movement for $g_1$ (yellow) and $g_2$ (orange) in $K_{1,6}$ is given in Figure \ref{fig:star}. Their movement uniquely defines the movement of $f_1$ (red), $f_2$ (violet), $f_3$ (blue), $f_4$ (cyan) and $f_5$ (green).

Note that for paths $P$, it is known, \cite{BaTa23}, that $\cvSpan{P}=0$. Consequently, the Cartesian $d$-capacity is not defined for paths. Therefore, the following theorem covers all trees for which the Cartesian $d$-capacity is well defined.

\begin{theorem}\label{thm:1cc-trees}
    If $T$ is an $n$-vertex tree different from a path, then \[\ccap{1}{T}=n-\brig(T)-2.\]
\end{theorem}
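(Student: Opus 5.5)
The plan is to split into two cases according to whether $T$ is a star, and to get the equality by matching a lower bound with an upper bound.

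\emph{Case 1: $T$ is a star.} Since $T$ is not a path, $T=K_{1,k}$ with $k\geq 3$. Then $\brig(T)=0$ and $n=k+1$. Lemma \ref{lem:stars} gives $\ccap{1}{T}=k-1=n-2=n-\brig(T)-2$, as required.

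\emph{Case 2: $T$ is neither a path nor a star.}

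For the lower bound, Proposition \ref{prop:trees-are-N-dense} shows that $T$ is branch-vertex $N$-dense with $N=\brig(T)+2$. As noted before that proposition, a tree that is neither a path nor a star contains an $M$-bridge for some $M\geq 1$, so $\brig(T)\geq 1$ and hence $N\geq 3$. Corollary \ref{lowerbound} then applies and yields $\ccap{1}{T}\geq n-N=n-\brig(T)-2$.

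For the upper bound, $T$ has an $M$-bridge, so Corollary \ref{bridgecor} gives $\ccap{1}{T}\leq n-\brig(T)-2$.

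The two bounds coincide, which proves the theorem in this case.

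I expect the only delicate point to be confirming that the hypotheses line up. The theorem requires $T$ not to be a path, which is exactly what is needed for $\ccap{1}{T}$ to be defined. Corollary \ref{lowerbound} requires $N\geq 3$, which is why the star case has to be treated separately through Lemma \ref{lem:stars}. For the existence of an $M$-bridge in Case 2, I would argue directly: a non-star tree that is not a path has two adjacent vertices of degree at least $2$, and the edge between them is a $1$-bridge.
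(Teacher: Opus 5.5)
Your proposal is correct and is essentially the paper's proof: the star case goes through Lemma~\ref{lem:stars}, and otherwise the lower bound comes from Proposition~\ref{prop:trees-are-N-dense} with Corollary~\ref{lowerbound}, and the upper bound from Corollary~\ref{bridgecor}. The only cosmetic difference is that the paper splits on $N=\brig(T)+2=2$ versus $N\geq 3$ rather than star versus non-star; your explicit argument that a non-star, non-path tree has two adjacent non-leaf vertices (hence a $1$-bridge, so $N\geq 3$ and Corollary~\ref{bridgecor} applies) is exactly the fact the paper uses implicitly.
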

\begin{proof} 
    Let $N = \brig(T) + 2$. 
    
    If $N=2$, then $T$ is a star graph $K_{1,k}$, for some integer $k\geq 3$, and $n=k+1$. By Lemma \ref{lem:stars} we have $\ccap{1}{T}=k-1 = (k+1) - (0+2) = n -\brig(T)-2$.
    
    Suppose now $N\geq 3$. By Proposition \ref{prop:trees-are-N-dense}, since $T$ is a tree different from a path, $T$ is branch-vertex $N$-dense. By Corollary \ref{lowerbound}, because $T$ is a branch-vertex $N$-dense graph, the capacity is bounded below by:
    \[\ccap{1}{T} \geq n - N = n - \brig(T)-2.\]   
    Furthermore, because the maximum bridge length in $T$ is exactly $\brig(T)$, it follows from Corollary \ref{bridgecor} that the capacity is bounded above by:
    \[\ccap{1}{T} \leq n - \brig(T) - 2 = n - \brig(T)-2.\]
    Since the upper and lower bounds coincide, $\ccap{1}{T} = n - \brig(T)-2$.
\end{proof}

Theorem \ref{thm:1cc-trees} is the foundation for a linear time algorithm that computes the Cartesian 1-capacity of an arbitrary tree $T$. Since this theorem requires one to determine the $\brig(T)$ in general case (when $T$ is not a path, nor a star graph), we provide the Algorithm \ref{alg:mbridge} that determines this invariant for such graphs.

\begin{algorithm}[!ht]
\caption{Compute $\brig(T)$}
\label{alg:mbridge}
\DontPrintSemicolon

\KwIn{A tree $T = (V, E)$ different from a path and a star}
\KwOut{$\brig{T}$}

\tcp{Step 1: Initialization}
\lFor{each vertex $v \in V$}{$\Visited[v] \gets \False$}
\MaxM $\gets 0$\;

\tcp{Step 2: Check base-case $M=1$}
\For{each edge $(u, v) \in E$}{
    \If{$\deg(u) \ge 3$ \textbf{and} $\deg(v) \ge 3$}{\MaxM $\gets \max(\MaxM, 1)$}
}

\tcp{Step 3: Looking for $M$-bridges, where $M\ge 2$}
\For{each vertex $v \in V$}{
    \If{$\deg(v) = 2$ \textbf{and} $\Visited[v] = \False$}{
        $k \gets 0$\; 
        \Boundary $\gets \emptyset$\;
        Initialize an empty Queue \Queue\;
        \Queue.\Enqueue{$v$}\; 
        $\Visited[v] \gets \True$\;
        
        \While{\Queue is not empty}{
            \Current $\gets$ \Queue.\Dequeue{}\;
            $k \gets k + 1$\;
            
            \For{each \Neighbor $\in N(\Current)$}{
                \If{$\deg(\Neighbor) = 2$ \textbf{and} $\Visited[\Neighbor] = \False$}{
                        $\Visited[\Neighbor] \gets \True$\;
                        \Queue.\Enqueue{\Neighbor}\;
                }
                \Else{\Boundary.\Append{\Neighbor}}
            }
        }
        
        \tcp{Step 4: Evaluate the length}
        $v_0 \gets \Boundary[1]$, $v_M \gets \Boundary[2]$\;
        \CurrentM $\gets 
        \begin{cases} 
            k + 1, & \text{if } \deg(v_0) \ge 3 \text{ and } \deg(v_M) \ge 3 \\
            k,     & \text{otherwise}
        \end{cases}$\;\label{line:a1-23}
        
        \MaxM $\gets \max(\MaxM, \CurrentM)$\;
    }
}

\Return \MaxM\;

\end{algorithm}

\begin{theorem}
    Given a tree $T$ on $n$ vertices that is neither a path nor a star graph, Algorithm \ref{alg:mbridge} correctly computes $\brig(T)$ in $\mathcal{O}(n)$ time.
\end{theorem}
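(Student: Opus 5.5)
The plan is to prove correctness by matching $M$-bridges of $T$ with the objects the algorithm inspects, and then to bound the running time separately. Throughout, recall that every edge of a tree is a bridge. So a path $v_0v_1\ldots v_M$ in $T$ is an $M$-bridge exactly when its internal vertices have degree $2$ and its endpoints have degree at least $2$.

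First I would classify the $M$-bridges. Consider the subgraph induced by the degree-$2$ vertices of $T$; call its components \emph{chains}.
\begin{enumerate}
\item Each chain is a path $u_1u_2\ldots u_k$, since it has maximum degree $2$ and is acyclic.
\item Each chain has exactly two neighbours outside it, $a$ adjacent to $u_1$ and $b$ adjacent to $u_k$. Both satisfy $\deg\neq 2$, so each is either a leaf or has degree at least $3$.
\item $a\neq b$, because otherwise $T$ would contain a cycle.
\item Not both $a$ and $b$ are leaves, since otherwise $T$ would be the path $au_1\ldots u_kb$.
\end{enumerate}
I would then check that the BFS in Step 3 enumerates exactly one chain per starting vertex. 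It increments $k$ once per chain vertex, and \texttt{boundary} receives exactly $a$ and $b$, because the neighbours of a chain vertex are either chain vertices or $a$ or $b$. Here I must handle carefully the fact that a chain vertex's already-visited degree-$2$ neighbour is skipped rather than appended. I expect this bookkeeping to be the main obstacle: the \textbf{else} branch fires for visited degree-$2$ neighbours as well, so I need to argue that the condition ``$\deg=2$ and not visited'' failing for a degree-$2$ vertex corresponds to an already-processed chain vertex. The intended reading is that only vertices with $\deg\neq 2$ are appended, and I would state that reading explicitly.

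Next I compute the largest bridge supported by a chain. If $\deg(a),\deg(b)\geq 3$, then $au_1\ldots u_kb$ is a $(k+1)$-bridge. No longer bridge contains these vertices, since it cannot pass through $a$ or $b$ as an internal vertex. If exactly one of them, say $b$, is a leaf, then $au_1\ldots u_k$ is a $k$-bridge, because $\deg(u_k)=2\geq 2$. Any bridge through $u_k$ cannot end at the leaf $b$, so this is the maximum. This matches line \ref{line:a1-23}.

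Conversely, let $v_0\ldots v_M$ be any $M$-bridge. If $M\ge 2$, its internal vertices lie in a single chain, and its endpoints lie in that chain or in $\{a,b\}$. Hence $M$ is at most the value computed for that chain. If $M=1$ and some endpoint has degree $2$, that endpoint lies in a chain whose computed value is at least $1$. If $M=1$ and both endpoints have degree at least $3$, the bridge is detected by Step 2. Together these show that \texttt{max\_M} equals the maximum over all bridges, that is, $\brig(T)$. If $T$ has no bridge at all, \texttt{max\_M} stays $0$, which agrees with the definition.

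For the running time, degrees are computed in $O(n)$ time, and Step 2 scans $n-1$ edges. In Step 3, the \texttt{visited} flag ensures that each degree-$2$ vertex is enqueued and dequeued exactly once. The adjacency scans therefore cost at most $\sum_v \deg(v)=2(n-1)$ in total, and each chain adds only $O(1)$ work in Step 4. The total time is $O(n)$.
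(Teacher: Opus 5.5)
Your proposal is correct and follows essentially the same route as the paper. Both arguments decompose the degree-$2$ vertices into maximal threads bounded by two vertices of degree $\neq 2$, not both leaves. Both show that each value $k$ or $k+1$ recorded in Step 4 is realised by an actual bridge, and that every $M$-bridge with $M\ge 2$ is dominated by the value of the thread containing its internal vertices, with Step 2 catching $1$-bridges between branch vertices. Both then bound the BFS work linearly.

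The bookkeeping issue you flag is a genuine bug in the pseudocode, not a cosmetic one. As written, the \emph{else} branch also appends already-visited degree-$2$ neighbours to the boundary list. For example, take a thread $u_1u_2$ between two degree-$3$ vertices $a,b$, and suppose $u_2$'s adjacency list has $u_1$ before $b$. Then the list becomes $[a,u_1,b]$, so $v_M=u_1$ and Step 4 returns $2$ instead of $3$. The paper's proof silently assumes the same corrected reading you adopt, namely that only neighbours of degree $\neq 2$ are appended. Stating that reading explicitly is the right fix.
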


\begin{proof}
    Let $M^*=\brig(T)$, and let $\MaxM$ be the value returned by the algorithm. We will show that $\MaxM = M^*$ by proving that every value $M$ computed by the algorithm corresponds to a valid $M$-bridge, and that no larger $M$-bridge exists in $T$.
    
    Because $T$ is a tree, every edge is a bridge. Thus, Condition 1 of Definition \ref{def:m-bridge} (definition of an $M$-bridge) is trivially satisfied for any path in $T$. Thus, the problem reduces entirely to verifying vertex degrees (Conditions 2 and 3).

    First, we will prove that $\MaxM \le M^*$. 

    The algorithm computes potential lengths in two distinct steps. In Step 2, it checks every edge $(u,v) \in E$. If $\deg(u) \ge 3$ and $\deg(v) \ge 3$, it records a length of $1$. Such an edge forms a path of length $M=1$ with zero internal vertices. Because both endpoints have a degree $\ge 2$, it is a valid $1$-bridge.

    In Step 3, the algorithm isolates maximal paths of degree-$2$ vertices (threads) using a Breadth-First Search. Let such a thread have $k$ internal vertices, forming a path $x_1, x_2, \ldots, x_k$. Let $v_0$ and $v_M$ be the boundary vertices adjacent to $x_1$ and $x_k$, respectively. Because $T$ is connected and not a path, $v_0$ and $v_M$ cannot both be leaves (degree $1$). Thus, exactly two cases exist for the boundary:
    \begin{description}
        \item[Case 1: Both $v_0$ and $v_M$ are branching vertices ($\deg \ge 3$).] The algorithm evaluates this in Step 4 as $\CurrentM = k+1$. The path $v_0 x_1 \ldots x_k v_M$ has exactly $k+1$ edges. All $k$ internal vertices have degree $2$ (satisfying Condition 2), and both endpoints have degree $\ge 3 \ge 2$ (satisfying Condition 3). This is a valid $(k+1)$-bridge.
        
        \item[Case 2: One boundary is a branching vertex, and the other is a leaf.] Without loss of generality, let $\deg(v_0) \ge 3$ and $\deg(v_M) = 1$. The algorithm falls to the \emph{otherwise} condition (line \ref{line:a1-23}) and evaluates $\CurrentM = k$. Because $v_M$ is a leaf, it cannot be the endpoint of an $M$-bridge. Hence, we obtain the path $v_0 x_1 \ldots x_{k-1} x_k$ with $k$ edges. The $k-1$ internal vertices have degree $2$, and the endpoints $v_0$ and $x_k$ have degrees $\ge 3$ and $2$, respectively. Both endpoint degrees are $\ge 2$, satisfying Condition 3. This is a valid $k$-bridge.
    \end{description}

    Since every evaluated $\CurrentM$ corresponds to a valid $\CurrentM$-bridge in $T$, the maximum found by the algorithm is $\MaxM \le M^*$.

    Second, we will prove that $\MaxM \ge M^*$. Because $T$ is neither a path nor a star, it has a diameter of at least $3$. Therefore, $T$ must contain at least one edge whose endpoints both have a degree $\ge 2$. By definition, this edge constitutes a valid $1$-bridge, hence $M^* \ge 1$. As established in the previous part of the proof, our algorithm evaluates these structures (either directly in Step 2 or via threads in Step 3), meaning it is guaranteed that $\MaxM \ge 1$.
    
    Suppose, towards a contradiction, that there exists an $M$-bridge $B$ of length $M > \MaxM$. Because $\MaxM \ge 1$, we have that  $M \ge 2$.
    
    Since $M \ge 2$, $B$ contains $M-1 \ge 1$ internal vertices, all of which must have a degree of $2$ in $T$. Therefore, these internal vertices must belong to some maximal thread of degree-$2$ vertices identified in Step 3. Let this maximal thread contain $k$ vertices. Since $B$ is a valid $M$-bridge, its endpoints cannot be leaves. To satisfy Condition 3, $B$ can only extend into the thread's boundary vertices if they have a degree of at least $3$. If a boundary vertex is a leaf, $B$ terminates at the last degree-$2$ vertex before it. Step 4 explicitly calculates the absolute maximum legal distance between non-leaf boundaries for this thread (yielding either $k+1$ or $k$). Therefore, $M$ cannot strictly exceed the value calculated for its containing thread, meaning $M \le \CurrentM \le \MaxM$, which contradicts our initial assumption.

    Finally, we need to prove the time complexity of the algorithm. Steps 1 and 2 evaluate the vertices and edges exactly once, taking $\mathcal{O}(n)$ time. In Step 3, the $\Visited$ array ensures that every degree-$2$ vertex is enqueued and processed at most once. The neighbors of degree-$2$ vertices are checked in $\mathcal{O}(1)$ time since their degree is strictly bounded. Thus, the entire algorithm runs in $\mathcal{O}(n)$ time.
\end{proof}

Now we state the final linear time algorithm that determines the Cartesian $1$-capacity of 
an arbitrary tree.

\begin{algorithm}[!ht]
\caption{Compute $\ccap{1}{T}$}
\label{alg:ccap1}
\DontPrintSemicolon

\KwIn{A tree $T = (V, E)$ on $n$ vertices}
\KwOut{The Cartesian capacity $\ccap{1}{T}$}

\BlankLine
\MaxDeg $\gets 0$\;

\BlankLine
\tcp{Step 1: Find the maximum degree}
\For{each vertex $v \in V$}{
    \lIf{$\deg(v) > \MaxDeg$}{\MaxDeg $\gets \deg(v)$}
}

\BlankLine
\tcp{Step 2: Evaluate topological constraints}
\If{\MaxDeg $\le 2$}{
    \Return \Undefined \tcp*{T is a path}
}
\ElseIf{\MaxDeg $= |V| - 1$}{
    \Return $\MaxDeg - 1$ \tcp*{T is a star graph $K_{1,k}, k\geq3$}
}
\Else{
    \Return $n-(\Brig{T}+2)$ \tcp*{General case processing}
}
\end{algorithm}

\begin{theorem}
    Given a tree $T$ on $n$ vertices, Algorithm \ref{alg:ccap1} correctly computes $\ccap{1}{T}$ in $\mathcal{O}(n)$ time.
\end{theorem}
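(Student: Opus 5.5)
Correctness follows by splitting on the three branches of Step 2, which mirror the case split in the proof of Theorem \ref{thm:1cc-trees}. Step 1 computes $\Delta=\max_v \deg(v)$ exactly. In the first branch, $\Delta\le 2$. A tree with maximum degree at most $2$ is a path. By \cite{BaTa23} its Cartesian vertex span is $0$, so Definition \ref{def:Cartesian-capacity} does not define $\ccap{1}{T}$, and returning \emph{undefined} is correct.

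In the second branch, $\Delta=|V|-1$ and $\Delta\ge 3$. Some vertex is then adjacent to every other vertex, and since $T$ is a tree it has no further edges. Hence $T=K_{1,k}$ with $k=\Delta\ge 3$. Lemma \ref{lem:stars} gives $\ccap{1}{T}=k-1=\Delta-1$, which is exactly the returned value. One degenerate case needs a remark: the star $K_{1,2}$ has $\Delta=2=|V|-1$, but it is caught by the first branch because that branch is tested first.

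In the third branch, $3\le\Delta<n-1$, so $T$ is neither a path nor a star. Its input therefore satisfies the hypothesis of the preceding theorem, so $\Brig{T}$ is computed correctly as $\brig(T)$. Theorem \ref{thm:1cc-trees} then gives $\ccap{1}{T}=n-\brig(T)-2$, matching the output.

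For the running time, Step 1 is one pass over $V$, taking $\mathcal{O}(n)$ time, assuming degrees are available (or computed from adjacency lists in $\mathcal{O}(n+|E|)=\mathcal{O}(n)$ time, since $|E|=n-1$). The comparisons in Step 2 take constant time. The call to $\Brig{T}$ runs in $\mathcal{O}(n)$ time by the preceding theorem. The total is $\mathcal{O}(n)$.

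The main point needing care is that the three branches exactly partition the trees and match the hypotheses of the results they invoke. In particular, the star branch must not absorb $K_{1,2}$ or trivial trees, and the general branch must receive only non-path, non-star trees, so that Algorithm \ref{alg:mbridge} and Theorem \ref{thm:1cc-trees} apply.
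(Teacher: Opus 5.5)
Your proof is correct and is essentially the paper's own argument. You use the same case split on $\Delta$, Lemma~\ref{lem:stars} for the star branch, and the correctness of Algorithm~\ref{alg:mbridge} together with Theorem~\ref{thm:1cc-trees} for the general branch; your explicit check that $K_{1,2}$ and trivial trees land in the path branch slightly sharpens the paper's wording. One caveat comes from the cited result, not from your reduction: as literally written, the \emph{else} branch of Algorithm~\ref{alg:mbridge} also appends the already-visited degree-$2$ BFS parent to the \emph{boundary} list, so for threads of at least two degree-$2$ vertices the entries read off as $v_0,v_M$ can be interior vertices and $k+1$ can be recorded as $k$ (joining the centres of two copies of $K_{1,2}$ by a path of length $3$ gives $\brig(T)=3$, yet the algorithm may return $2$), which is repaired by appending only neighbours of degree $\neq 2$.
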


\begin{proof}
    In Step 1, the maximum degree of the tree $T$, $\Delta(T)$, is computed and can be done in $\mathcal{O}(n)$ time. It is used to determine the structure of $T$, namely if $\Delta(T)\leq 2$ (can be checked in $\mathcal{O}(1)$ time), then $T$ is a path, hence in Step 2 the value returned is $\Undefined$. If $\Delta(T) = n-1$ and it is not a path (can also be checked in $\mathcal{O}(1)$ time), then $T$ is a star graph $K_{1,k}$, for some integer $k\geq 3$, hence, by Lemma \ref{lem:stars}, the value $\Delta(T)-1 = n-2$ is correctly returned. 
    Lastly, if $T$ is neither a path nor a star graph, then Algorithm \ref{alg:mbridge} is used to determine $\brig(T)$ and the correct value $n-(\Brig{T}+2)$ is returned (as proven in Theorem \ref{thm:1cc-trees}). Since this algorithm runs in $\mathcal{O}(n)$ time, the total complexity of Algorithm \ref{alg:ccap1} is also $\mathcal{O}(n)$.
\end{proof}

\section{Conclusion}\label{sec:conslusion}

Beyond their graph-theoretic significance, our results reveal strong connections with classical pebble-motion problems, multi-agent path finding, and token-swapping processes on trees. The obtained capacity bounds identify the precise structural obstacles responsible for routing complexity and offer a unifying topological perspective on several phenomena previously studied from an algorithmic viewpoint.

Several directions remain open for future research. Most notably, it would be interesting to extend the exact characterization of Cartesian $1$-capacity beyond trees, determine efficient algorithms for broader graph classes, and investigate Cartesian $d$-capacities for $d>1$. Understanding how graph capacities interact with other graph invariants and routing parameters may further strengthen the connection between graph topology and multi-agent movement problems.

\section*{Acknowledgments} 
Mateja Gra\v si\v c acknowledges the support of the Slovenian Research and Innovation  Agency (research core funding No. P1-0288). Aljoša Šubašić and Tanja Vojković were partialy supported by the ZMAJ project (IP-UNIST-45) funded by the European Union — NextGenerationEU and the VAL project (PK.3.4.17.0021) funded by the European Regional Development Fund (ERDF). Andrej Taranenko acknowledges the financial support from the Slovenian Research and Innovation Agency (research core funding No. P1-0297, projects N1-0285 and N1-0431).

\section*{Declaration of interests}
 
The authors declare that they have no conflict of interest. 

\section*{Data availability}
 
Our manuscript has no associated data.

\bibliographystyle{plain} 
\bibliography{bibliography}

\end{document}